\newcommand{\nc}{\newcommand}
\nc{\fg}{\mathfrak{f} } \nc{\vg}{\mathfrak{v} } \nc{\wg}{\mathfrak{w} }
\nc{\zg}{\mathfrak{z} } \nc{\ngo}{\mathfrak{n} } \nc{\kg}{\mathfrak{k} }
\nc{\mg}{\mathfrak{m} } \nc{\bg}{\mathfrak{b} } \nc{\ggo}{\mathfrak{g} }
\nc{\ggob}{\overline{\mathfrak{g}} } \nc{\sog}{\mathfrak{so} }
\nc{\sug}{\mathfrak{su} } \nc{\spg}{\mathfrak{sp} } \nc{\slg}{\mathfrak{sl} }
\nc{\glg}{\mathfrak{gl} } \nc{\cg}{\mathfrak{c} } \nc{\rg}{\mathfrak{r} }
\nc{\hg}{\mathfrak{h} } \nc{\tg}{\mathfrak{t} } \nc{\ug}{\mathfrak{u} }
\nc{\dg}{\mathfrak{d} } \nc{\ag}{\mathfrak{a} } \nc{\pg}{\mathfrak{p} }
\nc{\sg}{\mathfrak{s} } \nc{\affg}{\mathfrak{aff} }
\nc{\pca}{\mathcal{P}} \nc{\nca}{\mathcal{N}} \nc{\lca}{\mathcal{L}}
\nc{\oca}{\mathcal{O}} \nc{\mca}{\mathcal{M}} \nc{\tca}{\mathcal{T}}
\nc{\aca}{\mathcal{A}} \nc{\cca}{\mathcal{C}} \nc{\gca}{\mathcal{G}}
\nc{\sca}{\mathcal{S}} \nc{\hca}{\mathcal{H}} \nc{\bca}{\mathcal{B}}
\nc{\dca}{\mathcal{D}} \nc{\val}{\operatorname{val}}
\nc{\vp}{\varphi} \nc{\ddt}{\tfrac{{\rm d}}{{\rm d}t}}
\nc{\dpar}{\tfrac{\partial}{\partial t}} \nc{\im}{\mathtt{i}}
\nc{\SO}{\mathrm{SO}} \nc{\Spe}{\mathrm{Sp}} \nc{\Sl}{\mathrm{SL}}
\nc{\SU}{\mathrm{SU}} \nc{\Or}{\mathrm{O}} \nc{\U}{\mathrm{U}} \nc{\Gl}{\mathrm{GL}}
\nc{\Se}{\mathrm{S}} \nc{\Cl}{\mathrm{Cl}} \nc{\Spein}{\mathrm{Spin}}
\nc{\Pin}{\mathrm{Pin}} \nc{\G}{\mathrm{GL}_n(\RR)} \nc{\g}{\mathfrak{gl}_n(\RR)}
\nc{\RR}{{\Bbb R}} \nc{\HH}{{\Bbb H}} \nc{\CC}{{\Bbb C}} \nc{\ZZ}{{\Bbb Z}}
\nc{\FF}{{\Bbb F}} \nc{\NN}{{\Bbb N}} \nc{\QQ}{{\Bbb Q}} \nc{\PP}{{\Bbb P}}
\nc{\vs}{\vspace{.2cm}} \nc{\vsp}{\vspace{1cm}} \nc{\ip}{\langle\cdot,\cdot\rangle}
\nc{\ipp}{(\cdot,\cdot)} \nc{\la}{\langle} \nc{\ra}{\rangle} \nc{\unm}{\tfrac{1}{2}}
\nc{\unc}{\tfrac{1}{4}} \nc{\und}{\tfrac{1}{16}} \nc{\no}{\vs\noindent}
\nc{\lam}{\Lambda^2(\RR^n)^*\otimes\RR^n} \nc{\tangz}{{\rm T}^{\rm Zar}}
\nc{\nor}{{\sf n}}  \nc{\mum}{/\!\!/} \nc{\kir}{/\!\!/\!\!/}
\nc{\Ri}{\tfrac{4\Ric_{\mu}}{||\mu||^2}} \nc{\ds}{\displaystyle}
\nc{\ben}{\begin{enumerate}} \nc{\een}{\end{enumerate}} \nc{\f}{\frac}
\nc{\lb}{[\cdot,\cdot]} \nc{\isn}{\tfrac{1}{||v||^2}}
\nc{\gkp}{(\ggo=\kg\oplus\pg,\ip)} \nc{\ukh}{(\ug=\kg\oplus\hg,\ip)}
\nc{\tgkp}{(\tilde{\ggo}=\kg\oplus\pg,\ip)}
\nc{\wt}{\widetilde} \nc{\mm}{M}
\nc{\Hess}{\operatorname{Hess}} \nc{\ad}{\operatorname{ad}}
\nc{\Ad}{\operatorname{Ad}} \nc{\rank}{\operatorname{rank}}
\nc{\Irr}{\operatorname{Irr}} \nc{\End}{\operatorname{End}}
\nc{\Aut}{\operatorname{Aut}} \nc{\Inn}{\operatorname{Inn}}
\nc{\Der}{\operatorname{Der}} \nc{\Ker}{\operatorname{Ker}}
\nc{\Iso}{\operatorname{I}} \nc{\Diff}{\operatorname{Diff}}
\nc{\Lie}{\operatorname{L}} \nc{\tr}{\operatorname{tr}} \nc{\dif}{\operatorname{d}}
\nc{\sen}{\operatorname{sen}} \nc{\modu}{\operatorname{mod}}
\nc{\Ric}{\operatorname{R}} \nc{\Ricci}{\operatorname{Ric}}
\nc{\sym}{\operatorname{sym}} \nc{\symac}{\operatorname{sym^{ac}}}
\nc{\symc}{\operatorname{sym^{c}}} \nc{\scalar}{\operatorname{sc}}
\nc{\grad}{\operatorname{grad}} \nc{\ricci}{\operatorname{Rc}}
\nc{\Nor}{\operatorname{Norm}} \nc{\riccic}{\operatorname{ric^{c}}}
\nc{\riccig}{\operatorname{ric^{\gamma}}} \nc{\Rin}{\operatorname{M}}
\nc{\Le}{\operatorname{L}} \nc{\tang}{\operatorname{T}}
\nc{\level}{\operatorname{level}} \nc{\rad}{\operatorname{r}}
\nc{\abel}{\operatorname{ab}} \nc{\CH}{\operatorname{CH}}
\nc{\mcc}{\operatorname{mcc}} \nc{\Adj}{\operatorname{Adj}}
\nc{\Order}{\operatorname{O}}  \nc{\inj}{\operatorname{inj}} \nc{\proy}{\operatorname{pr}}
\nc{\vol}{\operatorname{vol}}
\theoremstyle{plain}
\newtheorem{theorem}{Theorem}[section]
\newtheorem{proposition}[theorem]{Proposition}
\newtheorem{corollary}[theorem]{Corollary}
\newtheorem{lemma}[theorem]{Lemma}
\theoremstyle{definition}
\newtheorem{definition}[theorem]{Definition}
\theoremstyle{remark}
\newtheorem{remark}[theorem]{Remark}
\title[Homogeneous Ricci solitons]{Structure of homogeneous Ricci solitons and the Alekseevskii conjecture}
\author{Ramiro Lafuente} \author{Jorge Lauret}
\address{Universidad Nacional de C\'ordoba, FaMAF and CIEM, 5000 C\'ordoba, Argentina}
\email{rlafuente@famaf.unc.edu.ar, lauret@famaf.unc.edu.ar}
\thanks{This research was partially supported by grants from CONICET, FONCYT and SeCyT UNC}
\begin{document}

\maketitle

\begin{abstract}
We bring new insights into the long-standing Alekseevskii conjecture, namely that any connected homogeneous Einstein manifold of negative scalar curvature is diffeomorphic to a Euclidean space, by proving structural results which are actually valid for any homogeneous expanding Ricci soliton, and generalize many well-known results on Einstein solvmanifolds, solvsolitons and nilsolitons.  We obtain that any homogeneous expanding Ricci soliton $M=G/K$ is diffeomorphic to a product $U/K\times N$, where $U$ is a maximal reductive Lie subgroup of $G$ and $N$ is the maximal nilpotent normal subgroup of $G$, such that the metric restricted to $N$ is a nilsoliton.  Moreover, strong compatibility conditions between the metric and the action of $U$ on $N$ by conjugation must hold, including a nice formula for the Ricci operator of the metric restricted to $U/K$.  Our main tools come from geometric invariant theory.  As an application, we give many Lie theoretical characterizations of algebraic solitons, as well as a proof of the fact that the following a priori much stronger result is actually equivalent to Alekseevskii's conjecture: Any expanding algebraic soliton is diffeomorphic to a Euclidean space.
\end{abstract}

\tableofcontents

\section{Introduction}\label{intro}

A major open question on homogeneous Einstein manifolds is known as Alekseevskii's conjecture (see \cite[7.57]{Bss}); namely

\begin{quote}
Any connected homogeneous Einstein manifold of negative scalar curvature is diffeomorphic to a Euclidean space.
\end{quote}

Known examples so far are all isometric to a left-invariant metric on a simply connected solvable Lie group.  This is also the situation for the broader class of homogeneous expanding Ricci solitons, i.e. $\ricci(g)=cg+\lca_Xg$ for some $c<0$ and vector field $X$.  Such examples are called {\it solvsolitons} in the literature ({\it nilsolitons} in the nilpotent case) and are defined by having their Ricci operators equal to a scalar multiple of the identity plus a derivation of the corresponding solvable Lie algebra.  Solvsolitons may be viewed as a rare case in Ricci soliton theory, as they are neither compact, nor gradient or K\"ahler.

In addition to their definition as a natural generalization of Einstein metrics, Ricci solitons correspond to solutions of the Ricci flow that evolve self similarly, that is, only by scaling and pullback by diffeomorphisms, and they therefore play an important role in the singularity behavior of the Ricci flow on any class of manifolds.

As a first step toward a possible resolution of the above conjecture, we obtain in the present paper some structural results which are actually valid for any homogeneous expanding Ricci soliton, and generalize the results on Einstein solvmanifolds obtained in \cite{Hbr,standard}, as well as those on solvsolitons given in \cite{solvsolitons}.  As in these works, the technology we use here is derived from strong results in geometric invariant theory.  The approach is developed in Section \ref{pre} in the general setting of homogeneous spaces, independently of the rest of the paper, since we believe it might be useful to study for instance other questions on Ricci curvature of homogeneous manifolds.

Our main result can be described as follows (see Theorem \ref{main} and Section \ref{cons} for more detailed statements).

\begin{theorem}
For any simply connected homogeneous expanding Ricci soliton $(M,g)$, say with $\ricci(g)=cg+\lca_Xg$, there exist a presentation $(M,g)=G/K$ as an almost-effective homogeneous space and a reductive decomposition $\ggo=\kg\oplus\pg$ such that the following conditions hold:

\begin{itemize}
\item $\ggo=\ug\oplus\ngo$ is a semi-direct product of Lie algebras, with $\ug$ reductive and $\ngo$ nilpotent, such that $\hg\perp\ngo$ with respect to the inner product on $\pg$ determined by $g$, where
$$
    \ggo= \rlap{$\overbrace{\phantom{\kg\oplus\hg}}^\ug$} \kg \oplus \underbrace{\hg\oplus\ngo}_\pg.
$$

\item $G\simeq U\ltimes N$ and as a differentiable manifold,
$$
M=U/K\times N,
$$
where $U$ and $N$ are the respective connected Lie subgroups of $G$ with Lie algebras $\ug$ and $\ngo$.

\item The metric restricted to $N$ is a nilsoliton, with Ricci operator $\Ricci_N=cI+D_1$ for some $D_1\in\Der(\ngo)$.

\item The Ricci operator of the metric restricted to $U/K$, relative to the reductive decomposition $\ug=\kg\oplus\hg$, is given by $\Ricci_{U/K}=cI+C_{\hg}$, where $C_{\hg}$ is the positive semi-definite operator of $\hg$ defined by
$$
\la C_{\hg}Y,Y\ra=\unc\tr{\left(\ad{Y}|_\ngo+(\ad{Y}|_\ngo)^t\right)^2}, \qquad\forall Y\in\hg.
$$

\item The adjoint action of $\ug$ on $\ngo$ and the metric satisfy the following extra compatibility condition:
$$
\sum [\ad{Y_i}|_\ngo,(\ad{Y_i}|_\ngo)^t]=0, \qquad\mbox{for any orthonormal basis $\{ Y_i\}$ of $\hg$},
$$
from which it follows that $(\ad{Y}|_\ngo)^t\in\Der(\ngo)$ for all $Y\in\ug$.
\end{itemize}

Conversely, if these conditions hold, then $G/K$ is an expanding Ricci soliton with Ricci operator
\begin{equation}\label{sa-intro}
\Ricci=cI+\unm(D_\pg+D_\pg^t),
\end{equation}
where
$$
D=\left[\begin{smallmatrix} 0&0\\ 0&D_\pg\\\end{smallmatrix}\right] = -\ad{H}+ \left[\begin{smallmatrix} 0&&\\ &0&\\ &&D_1\\\end{smallmatrix}\right]\in\Der(\ggo),
$$
and $H\in\pg$ is defined by $\la H,X\ra=\tr{\ad{X}}$ for all $X\in\pg$.
\end{theorem}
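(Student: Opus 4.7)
The plan is to apply the moment-map framework for homogeneous spaces developed in Section \ref{pre}, where the Lie bracket of $\ggo$ is viewed as a point $\mu=\lb$ in a variety $V$ of bracket tensors on $\pg$, and the Ricci operator at the origin is expressed (up to constants) as the moment map of a natural $\Gl(\pg)$-action on $V$. First I would reduce to a convenient almost-effective presentation $(M,g)=G/K$ with reductive decomposition $\ggo=\kg\oplus\pg$, and translate the geometric soliton equation $\ricci(g)=cg+\lca_Xg$ into the algebraic statement $\Ricci_\pg=cI+\unm(D_\pg+D_\pg^t)$ for some $D\in\Der(\ggo)$. This translation is available on any homogeneous space because the Lie-derivative correction $\lca_Xg$ can be absorbed into a one-parameter subgroup of automorphisms of $\ggo$, after a (possibly needed) enlargement of the presentation.

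The central step is to extract structural information from the fact that, under this dictionary, $\mu$ is essentially a critical point of the norm-squared moment map along the subgroup of $\Gl(\pg)$ preserving $\Ad(K)$-equivariance, with the soliton derivation supplying the direction in which $\mu$ flows inside its orbit. Applying real Kirwan--Ness stratification theory to the bracket variety then yields the Lie-algebra decomposition $\ggo=\ug\ltimes\ngo$: the nilradical $\ngo$ is realized as the sum of positive eigenspaces of the symmetric part $D^s:=\unm(D+D^t)$, and $\ug$ as a reductive complement containing $\kg$, via Mostow's theorem. The symmetry of $D^s$ also forces the orthogonality $\hg\perp\ngo$, producing the splitting $\pg=\hg\oplus\ngo$ and the diffeomorphism $M\simeq U/K\times N$.

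With this structure in place, the remaining claims follow from the homogeneous Ricci formula of Section \ref{pre}. Restricting the soliton equation to $\ngo$ gives $\Ricci_N=cI+D_1$ with $D_1=D|_\ngo\in\Der(\ngo)$, which is the nilsoliton condition on $N$. Matching the $\hg$-block of the Ricci operator against the soliton identity, together with the vanishing of the off-diagonal $\hg$--$\ngo$ block, forces the compatibility $\sum[\ad Y_i|_\ngo,(\ad Y_i|_\ngo)^t]=0$; a short algebraic check then shows this is equivalent to each $(\ad Y|_\ngo)^t$ being a derivation of $\ngo$, and makes the Ricci contribution from $\ngo$ to $\hg$ equal to $C_\hg$ as stated. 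The converse is then a direct substitution into the homogeneous Ricci formula, which returns exactly \eqref{sa-intro} and hence the soliton equation $\ricci(g)=cg+\lca_Xg$.

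The main obstacle is the second step: producing the semi-direct product decomposition, the orthogonal splitting, and the commutator compatibility on $\hg$ from the soliton equation alone. In the solvable setting (Einstein solvmanifolds and solvsolitons) the analogous argument was already delicate and relied on the full strength of the stratification of the variety of nilpotent brackets; the present statement must accommodate a possibly non-trivial semisimple piece inside $\ug$ and produce the precise identity linking the symmetric and adjoint actions of $\hg$ on $\ngo$. Carrying this out requires a careful generalization of the moment-map machinery from $\Gl$-actions on nilpotent brackets to the larger variety of homogeneous brackets, together with a sharp extraction of both the orthogonality of $\hg$ and $\ngo$ and the commutator-vanishing statement from the critical-point equation.
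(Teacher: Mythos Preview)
Your overall framework (translate to a semi-algebraic soliton via Jablonski's theorem, then exploit moment-map/GIT stratification) matches the paper, but the specific mechanism you describe for producing the decomposition $\ggo=\ug\ltimes\ngo$ contains a genuine gap. You propose to \emph{extract} $\ngo$ as the sum of positive eigenspaces of $D^s=\unm(D+D^t)$ and then obtain $\ug$ via Mostow; this cannot work as stated, because the theorem must cover the Einstein case, where one can take $D=0$ and hence $D^s=0$, yet the nilradical $\ngo$ is typically nonzero. In the paper the logic runs in the opposite direction: one fixes from the outset $\ngo$ to be the nilradical of $\ggo$ and sets $\hg:=\ngo^\perp\cap\pg$ (so $\hg\perp\ngo$ is a definition, not a consequence), and what must be \emph{proved} is that $\ug:=\kg\oplus\hg$ is a subalgebra, i.e.\ that the component $\lambda_1:\hg\times\hg\to\ngo$ of $\lb_\pg$ vanishes. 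The stratification is applied not to the full bracket on $\pg$ but only to the nilpotent bracket $\mu:=\lb|_{\ngo\times\ngo}$: one locates $\mu\in\sca_\beta$, builds the positive operator $E_\beta$ on $\pg$, and a Cauchy--Schwarz argument against the soliton identity forces $F:=S(\ad_\pg H+D_\pg)=tE_\beta$ and simultaneously $\lambda_1=0$.

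Your derivation of the compatibility $\sum[\ad Y_i|_\ngo,(\ad Y_i|_\ngo)^t]=0$ is also misplaced. It is not a consequence of the off-diagonal $\hg$--$\ngo$ block of $\Ricci$; that block vanishes for a different reason (the transposed adjoints preserve the descending central filtration). The commutator sum sits in the $\ngo$--$\ngo$ block, via $M|_\ngo=M_\mu+\unm\sum[\ad_\eta Y_i,(\ad_\eta Y_i)^t]$, and is killed only after a second GIT input: one first proves $(\ad Y|_\ngo)^t\in\Der(\ngo)$ by combining the identity $M_\mu+\unm\sum[\ad_\eta Y_i,(\ad_\eta Y_i)^t]+\tfrac{c}{\|\beta\|^2}\beta=0$ with the stratum estimate $\|\beta\|\le\|m(\mu)\|$, which forces $\sum\|\pi((\ad_\eta Y_i)^t)\mu\|^2=0$; only then does orthogonality of $M_\mu$ to commutators split off the vanishing of the sum. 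So the hard step is not a block-matching but a sharp inequality from the stratification of the nilpotent bracket variety, and your sketch does not yet supply it.
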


\begin{remark}
Regarding Alekseevskii's conjecture, we note that $M$ is diffeomorphic to a Euclidean space if and only if $U/K$ is so, which is equivalent to $K$ being a maximal compact subgroup of the reductive Lie group $U$.
\end{remark}

Condition (\ref{sa-intro}) implies that the Ricci tensor equals $\ricci(g)=cg+\lca_{X_D}g$, where $X_D$ is the vector field  defined by the one-parameter subgroup of equivariant diffeomorphisms $\vp_t\in\Diff(G/K)$ determined by the automorphisms $e^{tD}\in\Aut(\ggo)$, and the Ricci flow starting at $g$ evolves by $g(t)=(-2ct+1)\vp_{s(t)}^*g$, $\; t\in(\tfrac{1}{2c},\infty)$, where $s(t)=\log(-2ct+1)/c$.  These homogeneous spaces are called {\it semi-algebraic solitons}, and actually any homogeneous Ricci soliton is semi-algebraic with respect to its full isometry group $G=\Iso(M,g)$ (see \cite{Jbl}).  If the derivation $D$ above is symmetric and hence $\Ricci=cI+D_\pg$, then $G/K$ is said to be an {\it algebraic soliton}, which is precisely the algebro-geometric notion generalizing to any homogeneous space the definition of a solvsoliton (see \cite[Section 3]{homRS} for a recent overview on homogeneous Ricci solitons).

Geometrically speaking, algebraic solitons are characterized among homogeneous Ricci solitons as those for which the Ricci flow solution is simultaneously diagonalizable (see \cite{homRS}).  As a first application of our structural results, we give many Lie theoretical characterizations of algebraic solitons, as for instance $\ad{H}$ being a normal operator, or also $\Ricci|_{\hg}=cI$ (see Proposition \ref{equiv-algsol}).

In Section \ref{algsol-alek}, we use the above theorem to prove that given any expanding algebraic soliton $(M,g)=G/K$, with $G$ non-unimodular, one can modify the metric $g$ in order to obtain a new homogeneous metric $\tilde{g}$ on $M$ which is Einstein with $\ricci(\tilde{g})=c\tilde{g}$.  In the unimodular case the Einstein metric is obtained on $\RR\times M$.  In any case, this implies that the following a priori much stronger result is equivalent to the Alekseevskii conjecture:

\begin{quote}
Any expanding algebraic soliton $G/K$ is diffeomorphic to a Euclidean space (or equivalently, $K$ is a maximal compact subgroup of $G$).
\end{quote}

\begin{remark}
The statement inside the parentheses is actually the conclusion of Alekseevskii's conjecture as originally stated in Besse's book \cite[7.57]{Bss} (see Section \ref{ERS} for an account of the conjecture and its equivalent formulations).  It is worth noticing that if $G$ is a linear group, then the maximality of $K$ implies that $G/K$ is isometric to a left-invariant metric on a simply connected solvable Lie group (see \cite[Corollary 1,c]{Wlf}), i.e. a solvsoliton.
\end{remark}

We obtain in addition a generalization of the link between Einstein solvmanifolds and nilsolitons discovered in \cite{soliton}, to the case of homogeneous Einstein manifolds $G/K$ of negative scalar curvature and unimodular expanding algebraic solitons $G_0/K$, which as differentiable manifolds satisfy $G/K=\RR\times G_0/K$ (see Proposition \ref{Et}).

\section{Algebraic aspects of homogeneous Riemannian manifolds}\label{pre}

We consider in this section an `algebraic' point of view to study homogeneous
Riemannian manifolds, with special attention on the noncompact case, which allows us
to put to a good use all the results from geometric invariant theory described in Appendix \ref{git}.

A Riemannian manifold $(M,g)$ is said to be {\it homogeneous} if its isometry group
$\Iso(M,g)$ acts transitively on $M$.  A {\it homogeneous (Riemannian) space} is instead a
differentiable manifold $G/K$, where $G$ is a Lie group and $K\subset G$ is a closed
Lie subgroup, endowed with a $G$-invariant Riemannian metric.  Both concepts are of
course intimately related, though not in a one-to-one way.  To study a
geometric problem on homogeneous manifolds it is often very useful and
healthy to capture the relevant algebraic information and present the hypotheses and
the problem in `algebraic' terms.

Given a connected homogeneous manifold $(M,g)$, each transitive closed Lie subgroup $G\subset\Iso(M,g)$ gives rise to a
presentation of $(M,g)$ as a homogeneous space $(G/K,g)$, where
$K$ is the isotropy subgroup of $G$ at some point $o\in M$.  As $K$ is compact, there always
exists a {\it reductive} (i.e. $\Ad(K)$-invariant) decomposition $\ggo=\kg\oplus\pg$, where $\ggo$ and $\kg$ are respectively the Lie algebras of $G$ and $K$, and thus $\pg$ can be naturally identified with the tangent space $\pg\equiv T_oM=T_oG/K$, by taking the value at the origin $o=eK$ of the Killing vector fields
corresponding to elements of $\pg$ (i.e. $X_o=\ddt|_0\exp{tX}(o)$).  Any $G$-invariant metric $g$ on $G/K$ can therefore be identified with $\ip:=g(o)$, an $\Ad(K)$-invariant inner product on $\pg$.

We can extend $\ip$ to an $\Ad(K)$-invariant inner product on $\ggo$ such that $\la\kg,\pg\ra=0$.  Let us call the data set $\gkp$ a {\it metric reductive decomposition}.

In order to get a presentation $(M,g)=(G/K,g)$ of a connected homogeneous manifold as a homogeneous space, there is no need for $G\subset\Iso(M,g)$ to hold, that is, to have an {\it effective} action.  It is actually enough to have a transitive action of $G$ on $M$ by isometries that is {\it almost-effective} (i.e. the normal subgroup $\{ g\in G:ghK=hK, \;\forall h\in G\}$ of $K$ is discrete), along with a reductive decomposition $\ggo=\kg\oplus\pg$ such that the inner product $\ip$ on $\pg$ defined by $\ip:=g(o)$ is $\Ad(K)$-invariant.  In this more general case, $K$ might not be compact, but it is easy to prove that $\overline{\Ad(K)}$ is still compact in $\Gl(\ggo)$ and hence a metric reductive decomposition $\gkp$ can still be attached to $(G/K,g)$.

Moreover, since the Killing form $B$ of $\ggo$ is always negative definite on $\kg$ (recall that the isotropy representation $\ad:\kg\longrightarrow\End(\pg)$ is faithful by almost-effectiveness), there is always a distinguished reductive decomposition to choose; namely, the one where $\pg$ is precisely the orthogonal complement of $\kg$ with respect to $B$ (see e.g. \cite[Lemma 2.1]{homRS}).

Summarizing, a given homogeneous manifold $(M,g)$ can always be presented as a homogeneous space $(G/K,g)$ with a metric reductive decomposition $\gkp$, and it can always be assumed that $B(\kg,\pg)=0$.  Any homogeneous space considered in this paper will be assumed to be almost-effective and connected, unless otherwise stated.

\begin{remark} When $\kg=0$, a metric reductive decomposition consists only of the pair $(\ggo,\ip)$, which is
often referred to in the literature as a {\it metric Lie algebra} and corresponds to a left-invariant metric on a Lie group.
\end{remark}

Let us fix a homogeneous space $(G/K,g)$ together with a metric reductive decomposition $\gkp$ for the rest of the section.  We consider
\begin{equation}\label{decp}
\pg=\hg\oplus\ngo,
\end{equation}
the orthogonal decomposition with respect to $\ip$, where $\ngo$ is the {\it nilradical} of $\ggo$ (i.e. the maximal nilpotent ideal; recall that $\ngo\subset\pg$ as it is contained in the kernel of $B$).  Thus the $\pg$-component
$$
\lb_{\pg}:\pg\times\pg\longrightarrow\pg
$$
of the Lie bracket $\lb$ of $\ggo$ restricted to $\pg\times\pg$ can be decomposed as a sum of bilinear maps as follows:
$$
[\cdot,\cdot]_{\pg}=\lambda_0+\lambda_1+\eta+\mu,
$$
where
\begin{equation}\label{dec}
\begin{array}{lr}
\lambda_0:\hg\times\hg\longrightarrow\hg, &\quad \eta:\hg\times\ngo\longrightarrow\ngo,\\
\lambda_1:\hg\times\hg\longrightarrow\ngo,  &\quad \mu:\ngo\times\ngo\longrightarrow\ngo. \\
\end{array}
\end{equation}

We are using here that $\ngo$ is an ideal of $\ggo$.  Also note that $\lambda_0,\lambda_1$ and $\mu$ are skew-symmetric, $(\ngo,\mu)$ is a nilpotent Lie algebra, and by convention $[X,Y]=-\eta(Y,X)$ for all $Y\in\hg$, $X\in\ngo$.

In order to apply the results in Appendix \ref{git}, if $\dim{\ngo}=n$, then we identify $\ngo$ with $\RR^n$ via an orthonormal basis $\{ e_1,...,e_n\}$ of $\ngo$. In this way, $\mu$ can be viewed as a point in the variety of nilpotent Lie algebras $\nca\subset V$. If $\mu\ne 0$, then $\mu$ belongs to some stratum $\sca_{\beta}$ with $\beta\in\tg^+$ (see Theorem \ref{strata}), and we can define $E_{\beta}\in\End(\pg)$ by
\begin{equation}\label{ebeta}
E_{\beta}:=\left[\begin{smallmatrix} 0&\\ &\beta+||\beta||^2I\end{smallmatrix}\right], \qquad \mbox{i.e.} \quad
E_{\beta}|_{\hg}=0, \quad E_{\beta}|_{\ngo}=\beta+||\beta||^2I.
\end{equation}

\begin{remark}
It is still unclear what would be the geometric meaning of this diagonal $n\times n$-matrix $\beta$
associated to each metric reductive decomposition, and consequently, to any homogeneous manifold.
However, $\beta$ will play a fundamental role in the proofs of most of the
structural results on homogeneous Ricci solitons obtained in the present paper.
\end{remark}

On the other hand, we will also identify $[\cdot,\cdot]_{\pg}$ with an element of $\Lambda^2\pg^*\otimes\pg$ and use the notation introduced in Appendix \ref{git} by replacing $\RR^n$ with $\pg$.

The following technical result will be crucial in the applications.

\begin{lemma}\label{pie}
If $\mu\in\sca_{\beta}$ satisfies $\beta_{\mu}=\beta$, then
$$
\la\pi(E_{\beta})[\cdot,\cdot]_{\pg},[\cdot,\cdot]_{\pg}\ra\geq 0.
$$
\end{lemma}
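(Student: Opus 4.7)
The plan is to decompose $[\cdot,\cdot]_\pg = \lambda_0+\lambda_1+\eta+\mu$ as in (\ref{dec}), and to exploit that these four pieces lie in mutually orthogonal subspaces of $\Lambda^2\pg^*\otimes\pg$, arising from the decompositions $\Lambda^2\pg^*=\Lambda^2\hg^*\oplus(\hg^*\wedge\ngo^*)\oplus\Lambda^2\ngo^*$ and $\pg=\hg\oplus\ngo$.  Since $E_\beta$ preserves $\pg=\hg\oplus\ngo$, acting as $0$ on $\hg$ and as $\beta+||\beta||^2I$ on $\ngo$, the operator $\pi(E_\beta)$ preserves this four-fold orthogonal decomposition, so that
$$
\la\pi(E_\beta)[\cdot,\cdot]_\pg,[\cdot,\cdot]_\pg\ra = \la\pi(E_\beta)\lambda_0,\lambda_0\ra + \la\pi(E_\beta)\lambda_1,\lambda_1\ra + \la\pi(E_\beta)\eta,\eta\ra + \la\pi(E_\beta)\mu,\mu\ra,
$$
and the strategy is to show each summand is non-negative separately.

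The first summand vanishes immediately, since $E_\beta|_\hg=0$ and $\lambda_0$ is $\hg$-valued.  For the last summand, a direct calculation using that $I$ acts on $\Lambda^2\pg^*\otimes\pg$ by $\pi(I)=-1$ gives $\pi(E_\beta)\mu=\pi(\beta)\mu-||\beta||^2\mu$; decomposing $\mu=\sum_w\mu_w$ into weight components with respect to the Cartan $\tg\subset\glg(\ngo)$ yields
$$
\la\pi(E_\beta)\mu,\mu\ra = \sum_w\bigl(\la w,\beta\ra-||\beta||^2\bigr)||\mu_w||^2.
$$
By the defining property of the stratum $\sca_\beta$ provided by Theorem \ref{strata}, every weight $w$ appearing in $\mu$ satisfies $\la w,\beta\ra\geq||\beta||^2$, making the sum non-negative.

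For the $\lambda_1$ summand, one checks $\pi(E_\beta)\lambda_1=(\beta+||\beta||^2I)\circ\lambda_1$, giving $\sum_{p<q}\la(\beta+||\beta||^2I)\lambda_1(Y_p,Y_q),\lambda_1(Y_p,Y_q)\ra$ for an orthonormal basis $\{Y_p\}$ of $\hg$; this is non-negative thanks to the standard GIT fact that $\beta+||\beta||^2I\geq 0$ on $\ngo$ for any stratum label in the variety of nilpotent Lie algebras.  The $\eta$ summand reduces, after a short matrix computation in a $\beta$-eigenbasis $\{e_i\}$ of $\ngo$, to
$$
\la\pi(E_\beta)\eta,\eta\ra = \sum_p\tr\bigl(\beta\,[\eta_{Y_p},\eta_{Y_p}^t]\bigr) = \sum_p\sum_{i,j}(\beta_i-\beta_j)(\eta_{Y_p})_{ij}^2.
$$
The Jacobi identity in $\ggo$ forces each $\eta_Y\in\Der(\ngo,\mu)$, and the GIT structural result for stabilizers at points with $\beta_\mu=\beta$ implies $\Der(\ngo,\mu)$ sits in the non-negative $\beta$-weight subspace of $\End(\ngo)$.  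Hence $(\eta_{Y_p})_{ij}=0$ whenever $\beta_i<\beta_j$, which leaves a manifestly non-negative sum.

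The main obstacle is the analysis of the $\eta$ summand.  While the $\lambda_0$ and $\mu$ contributions follow almost directly from the setup and the definition of the stratum, and the $\lambda_1$ contribution from the positive semidefiniteness of $E_\beta|_\ngo$, the $\eta$ contribution requires the structural rigidity that derivations of $\mu$ carry no strictly negative $\beta$-weight components, which is precisely where the hypothesis $\beta_\mu=\beta$ plays its essential role.
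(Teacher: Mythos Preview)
Your proof is correct and follows essentially the same route as the paper: decompose $[\cdot,\cdot]_\pg$ into the four orthogonal pieces $\lambda_0,\lambda_1,\eta,\mu$, observe that $\pi(E_\beta)$ preserves this splitting, and show each contribution is non-negative using the properties of the stratum listed in Theorem~\ref{strata}. Two small remarks: in the $\eta$ term you are missing an overall factor of $2$ (coming from the two orderings $(Y,X)$ and $(X,Y)$ in the inner product on $\Lambda^2\pg^*\otimes\pg$), and your justification there invokes the stronger fact that $\Der(\mu)$ lies in the non-negative $\beta$-weight parabolic, whereas the paper simply applies \eqref{adbeta} to each $\eta_{Y_p}\in\Der(\mu)$---note that your own expression $\sum_{i,j}(\beta_i-\beta_j)(\eta_{Y_p})_{ij}^2$ is exactly $\la[\beta,\eta_{Y_p}],\eta_{Y_p}\ra$, so \eqref{adbeta} already suffices.
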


\begin{proof}
We first note that $\pi(E_{\beta})$ leaves invariant the subspaces of $\Lambda^2\pg^*\otimes\pg$ the $\lambda_i$'s, $\eta$ and $\mu$ belong to (for instance $\lambda_0\in\Lambda^2\hg^*\otimes\hg$, $\lambda_1\in\Lambda^2\hg^*\otimes\ngo$, etc).  These subspaces are orthogonal with
respect to the inner product $\ip$ defined in (\ref{innV}) and hence
\begin{align*}
\la\pi(E_{\beta})[\cdot,\cdot]_{\pg},[\cdot,\cdot]_{\pg}\ra=& \la\pi(E_{\beta})\lambda_0,\lambda_0\ra + \la\pi(E_{\beta})\lambda_1,\lambda_1\ra \\
& + \la\pi(E_{\beta})\eta,\eta\ra + \la\pi(E_{\beta})\mu,\mu\ra.
\end{align*}
Let $\{ Y_i\}$ and $\{ X_k\}$ denote orthonormal bases of $\hg$ and $\ngo$, respectively.  It is easy to check that
$$
\la\pi(E_{\beta})\lambda_0,\lambda_0\ra=0,
$$
and since $E_{\beta}|_{\ngo}$ is positive definite by (\ref{betapos}), we have that
\begin{equation}\label{l2l3e1}
\la\pi(E_{\beta})\lambda_1,\lambda_1\ra =\sum\la E_{\beta}\lambda_1(Y_i,Y_j),\lambda_1(Y_i,Y_j)\ra\geq 0.
\end{equation}

If $\ad_{\eta}{Y}$ denote the derivation of $(\ngo,\mu)$ defined by the adjoint action of $Y\in\hg$ on $\ngo$, then
\begin{align}
\la\pi(E_{\beta})\eta,\eta\ra  =& 2\sum\la E_{\beta}\eta(Y_i,X_j)-\eta(Y_i,E_{\beta}X_j),\eta(Y_i,X_j)\ra \notag \\
 =& 2\sum\la[\beta,\ad_{\eta}{Y_i}](X_j),\ad_{\eta}{Y_i}(X_j)\ra \label{l4} \\
 =& 2\sum\la[\beta,\ad_{\eta}{Y_i}],\ad_{\eta}{Y_i}\ra. \notag
\end{align}
It follows from (\ref{adbeta}) that $\la\pi(E_{\beta})\eta,\eta\ra\geq 0$. Finally, we have that
\begin{equation}\label{mu}
\la\pi(E_{\beta})\mu,\mu\ra
=\left\la\pi\left(\beta+||\beta||^2I\right)\mu,\mu\right\ra\geq 0
\end{equation}
by (\ref{delta}), which concludes the proof of the lemma.
\end{proof}

There is a unique element $H\in\ggo$ which satisfies
\begin{equation}\label{defH}
\la H,X\ra=\tr{\ad{X}}, \qquad\forall X\in\ggo.
\end{equation}
We note that $H\in\pg$ due to the $\ad{\kg}$-invariance of $\ip$, and that $H=0$ if and only if $\ggo$ is unimodular. Let $B_{\ggo}:\ggo\longrightarrow\ggo$ denote the symmetric map defined by the Killing form of $\ggo$ relative to $\ip$, that is,
\begin{equation}\label{defKilling}
\la B_{\ggo}X,Y\ra=\tr{\ad{X}\ad{Y}},\qquad\forall X,Y\in\ggo.
\end{equation}
It follows that, relative to the decomposition $\ggo = \kg \oplus \pg$, the operator $B_\ggo$ has the form
\begin{equation}\label{Bg}
B_\ggo = \left[\begin{smallmatrix} B_0&\ast\\ \ast&B_\pg\\\end{smallmatrix}\right], \qquad B_0<0,
\end{equation}
with $\ast=0$ if one is assuming that $B(\kg,\pg)=0$.

The {\it Ricci operator} $\Ricci$ of $(G/K,g)$ with metric reductive decomposition $\gkp$ is given by (see \cite[7.38]{Bss}),
\begin{equation}\label{ricci}
\Ricci=\mm-\unm B_{\pg}-S(\ad_{\pg}{H}),
\end{equation}
where $\ad_{\pg}{H}:\pg\longrightarrow\pg$ is defined by
$\ad_{\pg}{H}(X)=[H,X]_{\pg}$ for all $X\in\pg$,
\begin{equation}\label{sym}
S(A):=\unm(A+A^t),
\end{equation}
is the symmetric part of an operator, and $\mm:\pg\longrightarrow\pg$ is the symmetric operator defined by
\begin{align}
\la \mm X,Y\ra =& -\unm\sum\la [X,X_i]_{\pg},X_j\ra\la [Y,X_i]_{\pg},X_j\ra \label{R} \\
 &+ \unc\sum\la [X_i,X_j]_{\pg},X\ra\la [X_i,X_j]_{\pg},Y\ra, \qquad\forall
X,Y\in\pg, \notag
\end{align}
where $\{ X_i\}$ is any orthonormal basis of $(\pg,\ip)$.

It follows from (\ref{mmv}) that this `anonymous' map $\mm$ in the formula of the Ricci operator satisfies
\begin{equation}\label{mmR}
m([\cdot,\cdot]_{\pg})=\tfrac{4}{||[\cdot,\cdot]_{\pg}||^2}\mm,
\end{equation}
where $m:\Lambda^2\pg^*\otimes\pg\longrightarrow\sym(\pg)$ is the moment map for the natural action of $\Gl(\pg)$ on $\Lambda^2\pg^*\otimes\pg$.  In other words, $\mm$ may be alternatively defined by
\begin{equation}\label{Rmm}
\tr{\mm E}=\unc\la \pi(E)[\cdot,\cdot]_{\pg},[\cdot,\cdot]_{\pg}\ra,
\qquad\forall E\in\End(\pg),
\end{equation}
where we are considering $[\cdot,\cdot]_{\pg}$ as a vector in $\Lambda^2\pg^*\otimes\pg$, $\ip$ is the inner product defined in (\ref{innV}) and $\pi$ is the representation given in (\ref{actiong}) (see the notation in Appendix \ref{git} and replace the vector space $\RR^n$ by $\pg$).

\begin{remark}\label{Rort}
In particular, $\mm$ is orthogonal to any derivation of the algebra
$(\pg,\lb_{\pg})$.
\end{remark}

\begin{remark}\label{Mcomm}
It also follows that the moment map $M$ commutes with any derivation of the algebra $(\pg,\lb_{\pg})$ whose transpose is also a derivation.  Indeed, if $E$ is such a derivation, then
\begin{align*}
\tr{[\mm,E] C} =& \tr{\mm [E,C]} = \unc \la \pi([E,C])\lb_\pg,\lb_\pg \ra \\
=& \unc \la\pi(C)\lb_\pg,\pi(E^t)\lb_\pg\ra - \la\pi(C)\pi(E)\lb_\pg,\lb_\pg\ra = 0,
\end{align*}
for all $C\in \End(\pg)$.
\end{remark}

Notice that the above two remarks are actually valid for any algebra and its moment map value as considered in the appendix.

We now prove some technical results involving $H$, $\Ricci$ and the adjoint representation of $\kg$ on $\pg$, which will be useful in the following sections.

Let $\pg=\hg\oplus\ngo$ be the orthogonal decomposition defined in (\ref{decp}), and consider $\lb_{\pg}=\lambda_0+...+\mu$ as in {\rm (\ref{dec})}. Since $\ngo$ is the nilradical of $\ggo$ we have that $H\perp\ngo$ and $\ngo$ is in the kernel of the Killing form.  Thus,
$$
H\in\hg,
$$
and $\ad_{\pg}{H}$ and $B_{\pg}$ have the form
\begin{equation}\label{adHB}
\ad_{\pg}{H}=\left[\begin{smallmatrix} \ad_{\lambda_0}{H}&0\\  \ad_{\lambda_1} H &\ad_\eta H \end{smallmatrix}\right], \qquad
B_{\pg}=\left[\begin{smallmatrix} B_1&0\\ 0&0\end{smallmatrix}\right],
\end{equation}
where $\ad_{\lambda_i}{Y}(Y')=\lambda_i(Y,Y')$ for all $Y,Y'\in\hg$, and analogously for $\eta$.

We recall that
$$
\ggo=\kg\oplus\hg\oplus\ngo, \quad [\kg,\kg]\subset\kg, \quad
[\kg,\hg]\subset\hg, \quad [\ggo,\ngo]\subset\ngo,
$$
and thus the Lie bracket of $\ggo$ decomposes as
$$
\lb=\nu_0+\nu_1+\nu_2+\lambda_2+\lb_{\pg},
$$
where
$$
\begin{array}{lll}
\nu_0:\kg\times\kg\longrightarrow\kg, && \lambda_2:\hg\times\hg\longrightarrow\kg. \\
\nu_1:\kg\times\hg\longrightarrow\hg, && \\
\nu_2:\kg\times\ngo\longrightarrow\ngo, &&
\end{array}
$$
In all of that follows, it will be very useful to have in mind the following formulas for the adjoint operators relative to the decomposition
$\ggo=\kg\oplus\hg\oplus\ngo$.  For $Z\in\kg$, $Y\in\hg$, and
$X\in\ngo$ we have,
$$
\begin{array}{c}
\ad{Z}=\left[\begin{smallmatrix}
\ad_{\nu_0}{Z}&0&0\\
0&\ad_{\nu_1}{Z}&0\\
0&0&\ad_{\nu_2}{Z}
\end{smallmatrix}\right], \qquad
\ad{Y}=\left[\begin{smallmatrix}
0&\ad_{\lambda_2}{Y}&0\\
\ad_{\nu_1}{Y}&\ad_{\lambda_0}{Y}&0\\
0&\ad_{\lambda_1}{Y}&\ad_{\eta}{Y}
\end{smallmatrix}\right], \\
\ad{X}=\left[\begin{smallmatrix}
0&0&0\\
0&0&0\\
0&0&0\\
\ad_{\nu_2}{X}&\ad_{\eta}{X}&\ad_{\mu}{X}
\end{smallmatrix}\right].
\end{array}
$$
If $\ug:=\kg\oplus\hg$ then it is easy to see that $\lb_{\ug}:=\nu_0+\nu_1+\lambda_5+\lambda_0$ satisfies the Jacobi condition; moreover, $(\ug,\lb_{\ug})$ is a {\it reductive} (i.e. semisimple plus a center) Lie algebra isomorphic to $\ggo/\ngo$. This implies that $\tr{\ad_{\lambda_0}}{Y}=\tr{\ad_{\ug}{Y}}=0$, as every reductive Lie algebra is unimodular, and so
\begin{equation}\label{tradY}
\tr{\ad Y} = \la H,Y\ra=\tr{\ad_{\eta}{Y}}, \qquad \forall Y\in\hg.
\end{equation}
On the other hand, one obtains from the Jacobi identity for $\ggo$ that
$\ad_{\eta}{[Z,Y]}=[\ad_{\nu_2}{Z},\ad_{\eta}{Y}]$, from which it follows
that
$$
\la[Z,H],Y\ra=-\la H, [Z,Y]\ra=-\tr{\ad_{\eta}{[Z,Y]}}=0,
\qquad\forall Z\in\kg, Y\in\hg,
$$
and therefore,
\begin{equation}\label{kH0}
[\kg,H]=0.
\end{equation}
Since the Ricci tensor is $\ad{\kg}$-invariant one obtains that
\begin{equation}\label{Z}
[\ad{Z}|_{\pg},\Ricci]=0, \qquad\forall Z\in\kg.
\end{equation}

We conclude this section with a technical lemma about the derivations of $\ggo$ that leave $\kg$ invariant.

\begin{lemma}\label{Dpp}
If we consider the reductive decomposition with $B(\kg,\pg)=0$ and $D \in \Der(\ggo)$ is a derivation such that $D \kg \subseteq \kg$, then
$$
D\pg \subseteq \pg, \qquad D\ngo \subseteq \ngo, \qquad \tr{D|_\pg} = \tr{D|_\ngo}.
$$
\end{lemma}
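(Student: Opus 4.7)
The plan is to verify the three conclusions in order. For $D\pg\subseteq\pg$, I use that any derivation is skew-symmetric with respect to the Killing form $B$: setting $X\in\pg$ and $Y\in\kg$ in $B(DX,Y)+B(X,DY)=0$, the hypothesis $DY\in\kg$ together with the standing assumption $B(\kg,\pg)=0$ forces $B(DX,\kg)=0$. Since $B|_\kg$ is negative definite by (\ref{Bg}), $\kg^{\perp_B}=\pg$, so $DX\in\pg$. For $D\ngo\subseteq\ngo$, the nilradical is a characteristic ideal of $\ggo$ and is therefore preserved by every derivation.

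For the trace identity, decompose $D|_\pg$ relative to $\pg=\hg\oplus\ngo$ as
$$
D|_\pg=\left[\begin{smallmatrix} A & 0 \\ C & D|_\ngo \end{smallmatrix}\right],
$$
with $A\colon\hg\to\hg$ and $C\colon\hg\to\ngo$; the claim reduces to $\tr A=0$. My main tool is the derivation property applied to $[Y,X]$ with $Y\in\hg$, $X\in\ngo$: extracting the $\ngo$-component yields the operator identity
$$
[D|_\ngo,\ad_\eta Y]=\ad_\eta(AY)+\ad_\mu(CY)
$$
on $\ngo$. Taking trace annihilates the left side by cyclicity; the $\ad_\mu$-term vanishes by nilpotence; and $\tr\ad_\eta(AY)=\la H,AY\ra$ by (\ref{tradY}). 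This yields the partial constraint $\la H,AY\ra=0$ for all $Y\in\hg$, i.e., $A^tH=0$.

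To upgrade this to $\tr A=0$, pass to the induced derivation $\bar D$ on the reductive quotient $\ug=\ggo/\ngo\cong\kg\oplus\hg$. Since $D\kg\subseteq\kg$ and $D\hg\subseteq\hg\oplus\ngo$, the derivation $\bar D$ is block-diagonal with respect to $\ug=\bar\kg\oplus\bar\hg$, with $\bar D|_{\bar\hg}\cong A$ and $\bar D|_{\bar\kg}\cong D|_\kg$. Decomposing $\ug=\zg(\ug)\oplus[\ug,\ug]$, Whitehead's lemma makes $\bar D|_{[\ug,\ug]}$ inner and hence trace-free, so $\tr A=\tr\bar D|_{\zg(\ug)}-\tr D|_\kg$. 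The main obstacle will be showing this difference vanishes; I expect this to follow by applying the same trace argument to a lift $Y_0\in\ggo$ of a central element of $\ug$ (for which $[Y_0,\ggo]\subseteq\ngo$), together with compactness of $\kg$ trivializing the semisimple part of $D|_\kg$, so that $\tr\bar D|_{\zg(\ug)}$ is pinned down to $\tr D|_{\zg(\kg)}=\tr D|_\kg$.
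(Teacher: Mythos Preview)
Your arguments for $D\pg\subseteq\pg$ and $D\ngo\subseteq\ngo$ are correct and essentially coincide with the paper's: the paper writes the skewness of $D$ with respect to $B$ in matrix form as $B_\ggo D+D^tB_\ggo=0$ and reads off $C=0$ from the invertibility of $B_0=B|_\kg$, which is exactly your $\kg^{\perp_B}=\pg$ argument.

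The gap is in the trace identity. Your reduction to $\tr A=0$ via the reductive quotient $\ug=\ggo/\ngo$ is fine, and the identity $\tr\bar D=\tr\bar D|_{\zg(\ug)}$ is correct. But your proposed mechanism for showing $\tr\bar D|_{\zg(\ug)}=\tr D|_\kg$ does not work. Carrying out your ``same trace argument'' for a lift $Y_0=Z_0+W_0\in\kg\oplus\hg$ of a central element of $\ug$ gives, on $\ngo$,
\[
[D|_\ngo,\ad Y_0|_\ngo]=\ad(D|_\kg Z_0)|_\ngo+\ad_\eta(AW_0)+\ad_\mu(CW_0),
\]
and taking traces yields only $\la H,AW_0\ra=0$ (the $\kg$-part contributes zero since $\ad\kg|_\ngo$ is skew). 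This is nothing beyond the constraint $A^tH=0$ you already derived for arbitrary $Y\in\hg$; it places no restriction on $\tr\bar D|_{\zg(\ug)}$ and certainly does not identify it with $\tr D|_{\zg(\kg)}$. In particular, when $\kg=0$ and $\ggo$ is solvable (so $\ug=\hg$ is abelian and $\zg(\ug)=\hg$), your argument gives only $A(\hg)\subseteq H^\perp$, which does not force $\tr A=0$.

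The paper's proof avoids the quotient and instead keeps working with the Killing form of $\ggo$. It refines the decomposition to $\pg=\hg_1\oplus\ag\oplus\ngo$ with $\ag\oplus\ngo=\Ker B_\ggo\cap\pg$, and uses two facts: first, $\Ker B_\ggo$ is a solvable characteristic ideal with nilradical $\ngo$, so every derivation sends $\Ker B_\ggo$ into $\ngo$; this forces the $\ag$-diagonal block of $D$ to vanish. Second, on $\hg_1$ the Killing form is nondegenerate, and $B$-skewness $B_2D_1+D_1^tB_2=0$ with $B_2$ invertible gives $D_1^t=-B_2D_1B_2^{-1}$, hence $\tr D|_{\hg_1}=0$. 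What your approach is missing is precisely this use of the nondegeneracy of $B_\ggo$ off its kernel; the structure of the reductive quotient $\ug$ alone does not see it.
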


\begin{proof}
From $D \kg \subseteq \kg$ we get that $D$ has the form
\begin{equation}\label{D}
D = \left[\begin{smallmatrix} D_\kg&C\\ 0&D_\pg\\\end{smallmatrix}\right].
\end{equation}
On the other hand, $e^{tD} \in \Aut(\ggo)$ and hence $e^{-tD^t} B_\ggo e^{-tD} = B_\ggo$ for all $t$, so differentiating at $t=0$ yields
\begin{equation}\label{DB}
B_\ggo D + D^t B_\ggo = 0.
\end{equation}
Now we use this formula along with the information from \eqref{Bg} and \eqref{D} to obtain that $B_0 C = 0$, and therefore $C=0$ since $B_0$ is negative definite. Thus $D\pg\subset\pg$, and since $\ngo$ is the nilradical of $\ggo$ we obtain that $D\ngo\subset\ngo$.

In order to prove the last assertion, we observe that since $\ngo \subseteq \Ker B_\ggo \subseteq \pg$, we may assume without any loss of generality that there is an orthogonal decomposition $\hg = \hg_1 \oplus \ag$ such that $\ag \oplus \ngo = \Ker B_\ggo$. Recall that $\Ker B_\ggo$ and $\ngo$ are both characteristic ideals of $\ggo$, so that every derivation leaves them invariant. Moreover, every derivation of $\ggo$ carries $\Ker B_\ggo$ into $\ngo$, as $\ngo$ is also the nilradical of the solvable Lie algebra $\Ker B_\ggo$ (see e.g. \cite[Lemma 2.6]{GrdWls}).  Then, relative to the decomposition $\ggo = \kg \oplus \hg_1 \oplus \ag \oplus \ngo$, we see that $B_\ggo$ and $D$ have the following forms:
\begin{equation}\label{formBD}
B_\ggo = \left[\begin{smallmatrix}
B_0&0&0&0\\
0&B_2&0&0\\
0&0&0&0\\
0&0&0&0
\end{smallmatrix}\right], \qquad
D = \left[\begin{smallmatrix}
D_\kg&0&0&0\\
0&D_1&0&0\\
0&\star&0&0\\
0&\star&\star&D_\ngo
\end{smallmatrix}\right].
\end{equation}
It now follows from \eqref{DB} that $B_2 D_1 + D_1^t B_2 = 0$, but since $B_2$ is invertible, we obtain that
$$
D_1^t = -B_2 D_1 B_2^{-1}.
$$
This implies that $\tr{D_1} = 0$, concluding the proof.
\end{proof}

\begin{remark}\label{Bort}
It follows from \eqref{DB} that $\tr{B_{\pg}D_{\pg}}=0$ for any $\left[\begin{smallmatrix} 0&0\\ 0&D_\pg\\\end{smallmatrix}\right]\in\Der(\ggo)$.  If in addition $B(\kg,\pg)=0$, then this holds for any  $\left[\begin{smallmatrix} 0&\ast\\ \ast&D_\pg\\\end{smallmatrix}\right]\in\Der(\ggo)$.
\end{remark}

\section{Einstein and Ricci soliton homogeneous manifolds}\label{ERS}

Let $M$ be a differentiable manifold.  A Riemannian metric $g$ on $M$ is called {\it
Einstein} if its Ricci tensor $\ricci(g)$ satisfies
$$
\ricci(g)=cg, \qquad \mbox{for some}\; c\in\RR.
$$
The scalar $c$ is sometimes called the {\it cosmological constant}.  A classical reference for Einstein manifolds is the book \cite{Bss}, and some
updated expository articles are \cite{And}, \cite{LbrWng}, \cite[III,C.]{Brg1} and
\cite[11.4]{Brg2}.

In the homogeneous
case, the Einstein equation becomes a subtle system of algebraic equations, and the
following main general question is still open in both, the compact and noncompact cases:

\begin{quote}
Which homogeneous spaces $G/K$ admit a $G$-invariant Einstein Riemannian metric?
\end{quote}

We refer to the surveys \cite{Wng, cruzchica} and the references therein for an update in
the compact and noncompact cases, respectively.

In the noncompact homogeneous case, the only known examples until now are all of a very particular kind; namely,
solvable Lie groups endowed with a left invariant metric (so called
{\it solvmanifolds}).  Moreover, every known example is simply connected and consequently diffeomorphic to a Euclidean space.  The following long standing conjecture has been attributed to Dmitrii Alekseevskii.

\begin{quote}
{\bf Alekseevskii's conjecture} \cite[7.57]{Bss}.  Let $(G/K,g)$ be a homogeneous
Riemannian space such that $G\subset\Iso(G/K,g)$ is a closed Lie subgroup.  If $(G/K,g)$ is Einstein of negative scalar curvature, then $K$ is a maximal compact subgroup of $G$.
\end{quote}

When $G$ is a linear group, maximality of $K$ implies that $(G/K,g)$ is isometric to a (simply connected) solvmanifold (see \cite[Corollary 1,c]{Wlf}). The conjecture is wide open, and it
is known to be true only for $\dim \leq 5$, a result which follows from the complete
classification in these dimensions given in \cite{Nkn2}.  In \cite{Nkn1}, many
examples of noncompact homogeneous spaces which do not admit an Einstein invariant
metric are given.  However, many of such examples do admit invariant metrics of negative Ricci
curvature.  We refer to \cite{DttLt} and \cite{DttLtMtl} for examples of left invariant metrics of
negative Ricci curvature on the Lie groups $\Sl_n(\RR)$, $n\geq 3$ and on any
complex simple Lie group, respectively.

\begin{proposition}\label{equivalek}
Let $(M,g)$ be a connected homogeneous Riemannian manifold.  Then the following
conditions are equivalent:
\begin{itemize}
\item[(i)] There is a closed connected Lie subgroup $G\subset \Iso(M,g)$ acting transitively on $M$ such that its isotropy $K$ is a maximal compact subgroup of $G$.

\item[(ii)] $M$ is diffeomorphic to a Euclidean space.

\item[(iii)] For any homogeneous Riemannian space $(G/K,g)$ with $G$ connected and $K$ compact which is isometric to $(M,g)$, $K$ is a maximal compact subgroup of $G$.
\end{itemize}
\end{proposition}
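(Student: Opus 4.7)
The plan is to prove the cycle of implications (i) $\Rightarrow$ (ii) $\Rightarrow$ (iii) $\Rightarrow$ (i). Two of these reduce essentially to well-known facts about connected Lie groups, while the genuinely substantive content lies in (ii) $\Rightarrow$ (iii).

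The workhorse throughout is the Malcev--Iwasawa theorem: every connected Lie group $G$ admits maximal compact subgroups, they are all mutually conjugate and connected, and for any such $K''$ there is a diffeomorphism $G \cong K'' \times \RR^m$ under which $K''$ corresponds to $K'' \times \{0\}$. Granting this, (i) $\Rightarrow$ (ii) is immediate: if $K$ is itself maximal compact in a transitive closed Lie subgroup $G \subset \Iso(M,g)$, then $M$ is diffeomorphic to $G/K \cong \RR^m$. The converse (iii) $\Rightarrow$ (i) is nearly automatic: one specializes (iii) to $G = \Iso(M,g)_0$, the identity component of the full isometry group. This $G$ is closed and connected, it acts transitively on the connected manifold $M$, and its isotropy equals the intersection of $\Iso(M,g)_0$ with the (compact) isotropy of the full group and is therefore compact; (iii) then yields exactly the statement of (i).

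The substantive direction is (ii) $\Rightarrow$ (iii). Given any presentation $(G/K,g)$ with $G$ connected and $K$ compact, I would pick a maximal compact subgroup $K'' \supset K$ of $G$ (possible because every compact subgroup of a connected Lie group is contained in a maximal one) and study the fiber bundle
$$
K''/K \longrightarrow G/K \longrightarrow G/K''.
$$
By Malcev--Iwasawa the base $G/K''$ is diffeomorphic to a Euclidean space and hence contractible, so the bundle trivializes: $G/K \cong (K''/K) \times \RR^m$. Since $K''$ is connected and $K$ is closed in $K''$, the fiber $K''/K$ is a closed (compact, boundaryless) connected manifold, and the hypothesis $M \cong \RR^n$ forces it to be contractible.

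The step I expect to be the main obstacle, and where the topology does the real work, is the final observation that a closed connected manifold of positive dimension has nonzero top-degree (co)homology; being contractible therefore forces $\dim(K''/K)=0$, so $K''=K$ and $K$ is maximal compact in $G$, completing (iii) and closing the cycle.
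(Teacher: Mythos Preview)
Your argument is correct and rests on the same structure theorem as the paper's: the Malcev--Iwasawa result that for a connected Lie group $G$ and a compact subgroup $K$, the quotient $G/K$ is diffeomorphic to a Euclidean space if and only if $K$ is maximal compact. The paper simply invokes this equivalence as a black box (citing Iwasawa, Hochschild, and Hilgert--Neeb) and observes that (ii)$\Leftrightarrow$(i) and (ii)$\Leftrightarrow$(iii) follow immediately. You instead run the cycle (i)$\Rightarrow$(ii)$\Rightarrow$(iii)$\Rightarrow$(i) and supply a proof of the harder direction via the fibration $K''/K\to G/K\to G/K''$ over a contractible base, reducing to the fact that a closed connected manifold with trivial (mod~$2$) top homology is a point. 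This buys you a self-contained argument for the nontrivial implication rather than a citation, at the cost of a few extra lines; the underlying content is the same.
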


\begin{proof}
It is well known that if $G$ is any connected Lie group and $K\subset G$ is a compact Lie
subgroup, then $G/K$ is diffeomorphic to a Euclidean space if and only if $K$ is a
maximal compact subgroup of $G$ (see e.g. \cite{Iws}, \cite{Hch} or the recent book \cite[Sections 13.1-13.3]{HlgNeb}).  Thus the equivalence between part (ii) and any of the other two claims follows.
\end{proof}

The conjecture can therefore be rephrased in the following more transparent way:

\begin{quote}
{\bf Alekseevskii's conjecture}.  Any Einstein connected homogeneous Riemannian
manifold of negative scalar curvature is diffeomorphic to a Euclidean space.
\end{quote}

It also follows from Proposition \ref{equivalek} that to prove that the conjecture
holds for a given homogeneous manifold, one is allowed to
work with any of its presentations as a homogeneous space.

Among the several important contributions of Ricci flow theory, the following notion generalizing the Einstein condition has had a great impact in differential geometry.  A complete Riemannian manifold $(M,g)$ is said to be a {\it Ricci soliton} if
\begin{equation}\label{rseq}
\ricci(g)=cg+\lca_Xg,
\end{equation}
where $c\in\RR$ is the so called {\it cosmological constant}, as in the Einstein case, and $\lca_Xg$ is the usual Lie derivative of $g$ in the direction of a (complete) differentiable vector field $X$ on $M$.  If in addition $X$ is the gradient field of a smooth function on $M$
then $g$ is called a {\it gradient} Ricci soliton.  A metric $g$ is a Ricci
soliton if and only if the solution to the Ricci flow starting at $g$, given by
$$
\dpar g(t)=-2\ricci(g(t)), \qquad g(0)=g,
$$
evolves self similarly, that is,
\begin{equation}\label{rssol}
g(t)=c_t\vp_t^*g,
\end{equation}
for some scaling $c_t>0$ and one-parameter family $\vp_t$ of diffeomorphisms of $M$.  It is well-known that one can always assume that $c_t=-2ct+1$, where $c$ is the cosmological constant of the Ricci soliton $g$.  We refer to \cite{libro,Cao,DncHllWng} and the references therein for further information on Ricci solitons.

From results due to Ivey, Naber, Perelman and Petersen-Wylie (see \cite[Section 2]{solvsolitons}), it follows that any
{\it nontrivial} (i.e. non-Einstein and not the product of an Einstein homogeneous manifold with a Euclidean space) homogeneous Ricci soliton must be noncompact, {\it expanding} (i.e. $c<0$) and non-gradient.  As for the Einstein case, any known example so far of a nontrivial homogeneous Ricci soliton is isometric to a simply connected solvmanifold.

The following is a natural way to consider a homogeneous Ricci soliton `algebraic', in the sense that the algebraic structure of some of its presentations as a homogeneous space is strongly involved.

\begin{definition}\label{sas}\cite[Definition 1.4]{Jbl}
A homogeneous space $(G/K,g)$ is called a {\it semi-algebraic
soliton} if there exists a one-parameter family $\tilde{\vp}_t\in\Aut(G)$ with $\tilde{\vp}_t(K)=K$ such that
$$
g(t)=c_t\vp_t^*g, \qquad g(0)=g,
$$
is a solution to the Ricci flow for some scaling $c_t>0$, where $\vp_t\in\Diff(G/K)$ is the equivariant diffeomorphism determined by $\tilde{\vp}_t$.
\end{definition}

\begin{remark}\label{GscKc}
Let $(G/K,g)$ be a semi-algebraic soliton.  If $K_0$ is the connected component of $K$, then the cover $(G/K_0,g)$ is also a semi-algebraic soliton since the automorphisms satisfy $\tilde{\vp}_t(K_0)=K_0$ and the metrics are locally isometric.  Similarly, $(\wt{G}/\wt{K},g)$ is a semi-algebraic soliton, where $q:\wt{G}\longrightarrow G$ is the simply connected cover and $\wt{K}=q^{-1}(K)$, as each $\tilde{\vp}_t$ can be lift to an automorphism of $\wt{G}$ making the corresponding diagram commutative.  The simply connected cover $(\wt{G}/\wt{K}_0,g)$ of $(G/K,g)$ is therefore a semi-algebraic soliton as well.  In all the above cases, the metric is canonically defined and has been denoted by $g$.
\end{remark}

In terms of a metric reductive decomposition, semi-algebraic solitons are characterized as follows.

\begin{proposition}\cite{Jbl,homRS}\label{semi-car}
If a homogeneous space $(G/K,g)$ is a semi-algebraic soliton, then for the metric reductive decomposition $(\ggo=\kg\oplus\pg,\ip)$ with $B(\kg,\pg)=0$, the Ricci operator satisfies
\begin{equation}\label{alg2}
\Ricci=cI+\unm\left(D_{\pg}+D_{\pg}^t\right), \qquad \mbox{for some}\quad c\in\RR, \quad D=\left[\begin{smallmatrix} 0&0\\ 0&D_\pg\\\end{smallmatrix}\right]\in\Der(\ggo).
\end{equation}
Conversely, if \eqref{alg2} holds for some reductive decomposition and $G/K$ is simply connected, then $(G/K,g)$ is a semi-algebraic soliton.
\end{proposition}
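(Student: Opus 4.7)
For the forward direction, the plan is to differentiate the self-similar evolution $g(t)=c_t\vp_t^*g$ at $t=0$ and compare with the Ricci flow equation at the origin $o=eK$. Let $D_0:=\ddt|_0\tilde{\vp}_t\in\Der(\ggo)$. The condition $\tilde{\vp}_t(K)=K$ forces $D_0\kg\subseteq\kg$, and since we are in the reductive decomposition with $B(\kg,\pg)=0$, Lemma \ref{Dpp} yields $D_0\pg\subseteq\pg$, so $D_0$ is block-diagonal with respect to $\ggo=\kg\oplus\pg$. Under $T_oM\equiv\pg$ the differential $d\vp_t|_o$ equals $e^{tD_{0,\pg}}$, whence the generating vector field $X$ of $\vp_t$ satisfies $(\lca_Xg)(o)(v,w)=\la(D_{0,\pg}+D_{0,\pg}^t)v,w\ra$ for all $v,w\in\pg$. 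Inserting this and $\partial_tg|_{t=0}=-2\ricci(g)$ into the derivative of $g(t)=c_t\vp_t^*g$ at $o$ produces $\Ricci=-\tfrac{c'(0)}{2}I-\tfrac12(D_{0,\pg}+D_{0,\pg}^t)$, which becomes \eqref{alg2} after setting $c:=-c'(0)/2$ and replacing $D_0$ by $-D_0$.

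The derivation thus obtained is block-diagonal but may have a nonzero $\kg$-block. To arrive at the prescribed form $D|_\kg=0$, I would subtract inner derivations $\ad Z$ with $Z\in\kg$: each such $\ad Z$ is itself block-diagonal (as $[\kg,\kg]\subseteq\kg$ and $[\kg,\pg]\subseteq\pg$) and its $\pg$-restriction is skew with respect to $\ip$ by $\Ad(K)$-invariance, so \eqref{alg2} is preserved under this change. This kills the component of $D_0|_\kg$ lying in $\ad_\kg\kg$, and any residual part (necessarily supported on the center of $\kg$) can be disposed of by a short argument based on almost-effectiveness, which rules out nontrivial derivations of $\ggo$ that are concentrated on $\kg$ and vanish on $\pg$ (such a derivation would have to land in the kernel of the faithful isotropy representation).

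For the converse, suppose \eqref{alg2} holds with $D\in\Der(\ggo)$ of the stated block form. By Remark \ref{GscKc} we may assume $G$ simply connected, so the derivation $-D$ integrates to $\tilde{\vp}_t:=e^{-tD}\in\Aut(G)$. Simple connectivity of $G/K$ combined with $G$ simply connected forces $K$ to be connected via the long exact sequence of the fibration, and since $D|_\kg=0$ the automorphism $\tilde{\vp}_t$ fixes $K$ pointwise and descends to $\vp_t\in\Diff(G/K)$. Running the earlier computation with $-D$ in place of $D_0$ gives $(\lca_Xg)(o)=-\la(D_\pg+D_\pg^t)\cdot,\cdot\ra$, which together with the hypothesis yields $\ricci(g)(o)=cg(o)-\unm(\lca_Xg)(o)$; by $G$-equivariance of both sides this identity extends to the full Ricci soliton equation $\ricci(g)=cg-\unm\lca_Xg$ on $M$. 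The corresponding self-similar Ricci flow solution is then $g(t)=(-2ct+1)\vp_{s(t)}^*g$ with $s(t)=\log(-2ct+1)/c$, which exhibits $(G/K,g)$ as a semi-algebraic soliton in the sense of Definition \ref{sas}.

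I expect the main obstacle to be the cleanup step in the forward direction: arranging $D|_\kg=0$ exactly, rather than merely modulo inner derivations of $\kg$. Controlling the potentially non-inner part of $D_0|_\kg$ acting on the center of $\kg$ requires a delicate use of almost-effectiveness and of the structure of $\Der(\ggo)$, whereas the rest of the proof is an essentially routine unwinding of the Ricci flow, the Lie derivative, and $G$-equivariance.
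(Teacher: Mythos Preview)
The paper does not prove this proposition: it is stated with citation to \cite{Jbl,homRS} and no proof is given here, so there is no in-paper argument to compare your proposal against. Your overall plan---differentiate the self-similar evolution at $t=0$, use Lemma~\ref{Dpp} to get the block-diagonal form, and for the converse integrate $-D$ to a one-parameter group of automorphisms---is exactly the standard route taken in those references.

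Two points deserve tightening. In the converse, you cannot invoke Remark~\ref{GscKc} to reduce to $G$ simply connected, since that remark presupposes the semi-algebraic soliton property you are proving. The correct reduction is to pass to the universal cover $\widetilde G$ and set $\widetilde K=q^{-1}(K)$; simple connectedness of $G/K$ forces $\widetilde K$ to be connected and $\widetilde G/\widetilde K\cong G/K$, after which your argument applies verbatim to $(\widetilde G,\widetilde K)$.

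In the forward direction, your diagnosis that the cleanup of $D|_\kg$ is the only genuine obstacle is correct, but the almost-effectiveness step as you state it does not close the gap. After subtracting $\ad W$ with $W$ in the semisimple part $[\kg,\kg]$, the residual $D_0'|_\kg$ is an endomorphism $A$ of the center $\zg(\kg)$. Your argument ``derivations of $\ggo$ that vanish on $\pg$ must vanish by faithfulness of the isotropy representation'' is a true statement, but it applies only once you know that $\left[\begin{smallmatrix}A&0\\0&0\end{smallmatrix}\right]$ is itself a derivation of $\ggo$---and that is exactly equivalent to what you want to prove, namely that $\left[\begin{smallmatrix}0&0\\0&D_0'|_\pg\end{smallmatrix}\right]\in\Der(\ggo)$. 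So as written the argument is circular at this last step. In \cite{homRS} this is handled by a more careful analysis (using, among other things, that one may take $\tilde\vp_t$ to be a one-parameter \emph{group} and exploiting the resulting commutation relations together with $[\ad\kg|_\pg,\Ricci]=0$); you should either reproduce that argument or cite it.
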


In the case when the derivation $D$ in formula \eqref{alg2} is symmetric, and hence $\Ricci=cI+D_{\pg}$, the homogeneous space $(G/K,g)$ is called an {\it algebraic soliton}.  Actually, all known examples of nontrivial homogeneous Ricci solitons are isometric to a left-invariant algebraic soliton on a simply connected solvable Lie group, the so called {\it solvsolitons}.

The strong presence of the algebraic side of homogeneous manifolds in regard to Ricci soliton theory becomes evident in the following result.

\begin{theorem}\cite[Proposition 2.2]{Jbl}\label{semi}
Any homogeneous Ricci soliton $(M,g)$ is a semi-algebraic soliton with respect to its full
isometry group $G=\Iso(M,g)$.
\end{theorem}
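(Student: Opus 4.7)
The plan is to start from the self-similar Ricci flow solution $g(t)=c_t\varphi_t^*g$ (with $c_t=-2ct+1$) guaranteed by the soliton equation $\ricci(g)=cg+\lca_X g$, and to show that, after a basepoint-fixing modification, each $\varphi_t\in\Diff(M)$ comes from a smooth one-parameter family of automorphisms of $G=\Iso(M,g)$ preserving a chosen isotropy $K$.

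The key observation is a pullback computation: for any $h\in\Diff(M)$,
$$
h^*g(t)=c_t(\varphi_t\circ h)^*g,
$$
so pulling back by $\varphi_t^{-1}$ shows that $h^*g(t)=g(t)$ is equivalent to $(\varphi_t h\varphi_t^{-1})^*g=g$, i.e., to $\varphi_t h\varphi_t^{-1}\in G$. This gives the identity $\Iso(M,g(t))=\varphi_t^{-1}G\varphi_t$. Combined with the forward diffeomorphism invariance of the Ricci flow, which yields $G\subseteq\Iso(M,g(t))$ and hence $\varphi_t G\varphi_t^{-1}\subseteq G$, together with the reverse inclusion (see the final paragraph), one concludes that $h\mapsto\varphi_t h\varphi_t^{-1}$ is a smooth one-parameter family of Lie group automorphisms of $G$.

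With that in hand, fix $o\in M$ and set $K=G_o$. Using a local smooth section of $G\to G/K$ along the curve $t\mapsto\varphi_t(o)$, pick a smooth family $h_t\in G$ with $h_t\cdot o=\varphi_t(o)$ and $h_0=e$, and replace $\varphi_t$ by $\psi_t:=h_t^{-1}\circ\varphi_t$. Since $h_t\in\Iso(M,g)$ we still have $\psi_t^*g=\varphi_t^*g$ and thus $g(t)=c_t\psi_t^*g$, while now $\psi_t(o)=o$. Define $\tilde\varphi_t(h):=\psi_t h\psi_t^{-1}\in\Aut(G)$; the identity $\psi_t(o)=o$ forces $\tilde\varphi_t(K)=\psi_t G_o\psi_t^{-1}=G_{\psi_t(o)}=K$, and a brief verification shows that the equivariant diffeomorphism $hK\mapsto\tilde\varphi_t(h)K$ of $G/K\simeq M$ coincides with $\psi_t$. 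This is precisely the content of Definition \ref{sas}.

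The only substantive point is the reverse inclusion $\Iso(M,g(t))\subseteq G$, which is needed to promote the injective Lie group homomorphism $h\mapsto\varphi_t h\varphi_t^{-1}$ to an automorphism of $G$; forward diffeomorphism invariance of the Ricci flow gives only one direction. This can be handled either by invoking backward uniqueness of the Ricci flow for complete, bounded-curvature solutions (which applies to homogeneous flows), or by observing that the image $\varphi_t G\varphi_t^{-1}$ is a Lie subgroup of $G$ of the same dimension and with the same number of connected components as $G$, hence must equal $G$. Everything else reduces to careful bookkeeping with the basepoint and functoriality of pullback.
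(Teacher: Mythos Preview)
The paper does not actually prove this theorem; it is quoted directly from \cite[Proposition 2.2]{Jbl} without argument, so there is no in-paper proof to compare against. Your outline is correct and is essentially the natural argument (and the one Jablonski gives): pass to the self-similar Ricci flow solution, observe that conjugation by $\varphi_t$ carries $G=\Iso(M,g)$ onto $\Iso(M,g(t))$, use forward and backward uniqueness of the Ricci flow (both available in the homogeneous, hence complete bounded-curvature, setting) to conclude these groups coincide, then correct by a smooth curve $h_t\in G$ to fix the basepoint and read off $\tilde\varphi_t\in\Aut(G)$ preserving $K$. The check that the equivariant diffeomorphism of $G/K$ induced by $\tilde\varphi_t$ is $\psi_t$ is exactly the computation $\tilde\varphi_t(h)\cdot o=\psi_t\big(h(\psi_t^{-1}(o))\big)=\psi_t(h\cdot o)$, using $\psi_t(o)=o$.

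Your alternative to backward uniqueness via a dimension/component count also goes through, once you note that $\pi_0(\Iso(M,g))$ is finite: since $M$ is connected the identity component $G^0$ already acts transitively, so $G=G^0K$ and $\pi_0(G)$ is a quotient of $\pi_0(K)$, which is finite because $K$ is compact.
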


For a recent account of homogeneous Ricci solitons, we refer the reader to \cite[Section 3]{homRS}.

\section{Structure of semi-algebraic solitons}\label{eins}

In this section, we study the algebraic structure of metric reductive decompositions of semi-algebraic solitons.  It will be shown that quite
restrictive Lie theoretic conditions must hold.  The main structural result is Theorem \ref{main}.  Our main tools will be the technical results given in Section \ref{pre}, most of them obtained by using the stratification from
geometric invariant theory described in Appendix \ref{git}.

Those readers only interested in Einstein homogeneous manifolds may just set $D=0$ everywhere, although it is worth mentioning that the effort required to understand the proof of Theorem \ref{main} for either Einstein metrics or semi-algebraic solitons is practically the same.

Let $(G/K,g)$ be a homogeneous space, and consider the metric reductive decomposition $\gkp$ such that $B(\kg,\pg)=0$.  We deduce from Proposition \ref{semi-car}, (\ref{ricci}) and (\ref{Rmm}) that $(G/K,g)$ is a semi-algebraic soliton, say with $\Ricci=cI+S(D_{\pg})$ (see \eqref{alg2}), if and only if
\begin{equation}\label{einstein}
\tr{\left(cI+\unm B_{\pg}+F)\right)E}= \unc\la
\pi(E)[\cdot,\cdot]_{\pg},[\cdot,\cdot]_{\pg}\ra, \qquad\forall E\in\End(\pg),
\end{equation}
where
$$
F:=S(\ad_{\pg}{H}+D_{\pg}).
$$
It follows from  Lemma \ref{Dpp} that the derivation $D\in\Der(\ggo)$ has the following form relative to the decomposition $\ggo=\kg\oplus\hg\oplus\ngo$:
\begin{equation}\label{formD}
D=\left[\begin{smallmatrix}
0&0&0\\
0&D_{\hg}&0\\
0&D_{\hg\ngo}&D_{\ngo}
\end{smallmatrix}\right], \qquad D_{\ngo}\in\Der(\ngo), \qquad \tr{D_\hg} = 0.
\end{equation}

We are now in a position to prove the first structural result, which starts to show that the structure of $\gkp$ is far from arbitrary under the semi-algebraic soliton condition.

\begin{proposition}\label{leo}
Let $(G/K,g)$ be a homogeneous space, and consider the metric reductive decomposition $\gkp$ such that $B(\kg,\pg)=0$.  Assume that $(G/K,g)$ is an expanding semi-algebraic soliton, say with $\Ricci=cI+S(D_{\pg})$, $c<0$, and consider the decompositions $\pg=\hg\oplus\ngo$ and $\lb_{\pg}=\lambda_0+...+\mu$ defined as in {\rm (\ref{decp})} and {\rm (\ref{dec})}, respectively.  Then,
$$
[\hg,\hg]_{\pg}\subset\hg,
$$
or equivalently, $\lambda_1=0$.  Moreover, if $\mu\ne 0$ and $\beta_{\mu}=\beta$ for the $\beta\in\tg^+$ such that $\mu\in\sca_{\beta}$ (see {\rm Theorem \ref{strata}}), then,

\begin{itemize}
\item[(i)] $\beta+||\beta||^2I\in\Der(\ngo)$.
\item[ ]
\item[(ii)] $[\beta,\ad{\hg}|_{\ngo}]=0$.
\item[ ]
\item[(iii)] $S(\ad_{\pg}{H}+D_{\pg})=tE_{\beta}$ for $t=\tfrac{|| H||^2+\tr{D_{\ngo}}}{-1+||\beta||^2\dim{\ngo}}$ (see {\rm (\ref{ebeta})}), or equivalently, the following conditions hold:
    \begin{itemize}
     \item[(a)] $S(\ad_{\lambda_0}{H}+D_{\hg})=0$.

     \item[(b)] $\ad_{\lambda_1}{H}=D_{\hg \ngo} = 0$.

     \item[(c)] $S(\ad_{\pg}{H}|_{\ngo}+D_{\ngo})=t(\beta+||\beta||^2I)$, for some $t\geq 0$.
     \end{itemize}
\end{itemize}

For $\mu=0$ one has that $S(\ad_{\pg}{H}+D_{\pg})=
\left[\begin{smallmatrix}
0&0\\
0&tI
\end{smallmatrix}\right]$, where $t=\tfrac{||H||^2+\tr{D_{\ngo}}}{\dim{\ngo}}$.
\end{proposition}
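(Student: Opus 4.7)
The plan is to apply the semi-algebraic soliton identity \eqref{einstein} to carefully chosen test endomorphisms, then leverage the positivity of Lemma \ref{pie}. Combining Proposition \ref{semi-car} with \eqref{ricci} and \eqref{Rmm}, the hypothesis is equivalent to
$$ \tr\bigl((cI + \tfrac{1}{2}B_\pg + F)\,E\bigr) \;=\; \tfrac{1}{4}\,\la\pi(E)\,\lb_\pg,\lb_\pg\ra, \qquad \forall E\in\End(\pg), $$
with $F := S(\ad_\pg H + D_\pg)$. By Lemma \ref{Dpp} the derivation $D$ has the block form \eqref{formD}, so $D_\ngo \in \Der(\ngo,\mu)$; together with $\ad_\eta H \in \Der(\ngo,\mu)$, the $\ngo\times\ngo$-block of $F$ is the symmetric part of a derivation of $\mu$, while according to \eqref{adHB} its $\hg\times\hg$-block is $S(\ad_{\lambda_0}H + D_\hg)$ and its mixed block is $\tfrac{1}{2}(\ad_{\lambda_1}H + D_{\hg\ngo})$.

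The heart of the argument is the substitution $E = E_\beta$ (and $E = P_\ngo$, the projector onto $\ngo$, in the case $\mu = 0$). Using $E_\beta|_\hg = 0$, $B_\pg|_\ngo = 0$, \eqref{tradY}, and the appendix orthogonality $\tr(\beta\,\delta) = 0$ for every $\delta \in \Der(\ngo,\mu)$ (a consequence of $\beta$ being the optimal moment-map direction for the stratum containing $\mu$), the left-hand side reduces to the scalar $c\tr E_\beta + \|\beta\|^2(\|H\|^2 + \tr D_\ngo)$. The right-hand side, by Lemma \ref{pie}, is the sum of three nonnegative pieces associated respectively to $\lambda_1$, $\eta$ and $\mu$. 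The crucial rigidity step is that the optimality $\beta_\mu = \beta$ upgrades Lemma \ref{pie} to termwise equality: this forces $\la\pi(E_\beta)\lambda_1,\lambda_1\ra = 0$, whence $\lambda_1 = 0$ since $E_\beta|_\ngo$ is positive-definite; $\la\pi(E_\beta)\eta,\eta\ra = 0$, which via \eqref{l4} is precisely (ii); and $\la\pi(E_\beta)\mu,\mu\ra = 0$, which via \eqref{mu} is precisely (i).

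With $\lambda_1 = 0$, (i) and (ii) in hand, condition (iii) is obtained block-by-block. Testing \eqref{einstein} against $E$ supported on $\hg$ and using that the reductive Lie algebra $\ug$ is unimodular (so the corresponding right-hand side collapses appropriately) forces $S(\ad_{\lambda_0}H + D_\hg) = 0$, which is (a). Testing against an off-diagonal $E$, together with $\lambda_1 = 0$, yields $D_{\hg\ngo} = 0$, which is (b). Finally, the derivation orthogonality of $\beta$ applied to the remaining instances of \eqref{einstein} shows that $S(\ad_\eta H + D_\ngo)$ must be proportional to $\beta + \|\beta\|^2 I$; taking traces with $\tr E_\beta = \|\beta\|^2 \dim\ngo - 1$ (the GIT normalization $\tr\beta = -1$) yields the stated value of $t$, and $t \geq 0$ follows from the sign of the original inequality together with $c < 0$ and positive-definiteness of $E_\beta|_\ngo$. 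The $\mu = 0$ case runs along identical lines with $E_\beta$ replaced by $P_\ngo$ throughout.

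The main obstacle is precisely the rigidity upgrade in the second paragraph: Lemma \ref{pie} by itself produces only $\geq 0$, and promoting this to termwise vanishing requires combining the inequality with the sharp value of the left-hand side at $E_\beta$ (obtained via the stratum-specific orthogonality $\tr(\beta\,\delta)=0$ for $\delta\in\Der(\mu)$) and the expanding hypothesis $c<0$. Pinning down the exact value of $\la\pi(E_\beta)\mu,\mu\ra$ via the stratification is what distinguishes this argument from a naive trace inequality and is where the full geometric invariant theory machinery of the appendix is needed.
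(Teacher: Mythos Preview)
Your proposal has the right overall shape --- testing \eqref{einstein} against $E_\beta$ and invoking Lemma~\ref{pie} --- but the central rigidity step is misidentified. You assert that ``the optimality $\beta_\mu=\beta$ upgrades Lemma~\ref{pie} to termwise equality''; this is not the mechanism. The condition $\beta_\mu=\beta$ is the \emph{hypothesis} of Lemma~\ref{pie}, and under it the lemma gives only the inequality $\la\pi(E_\beta)\lb_\pg,\lb_\pg\ra\geq 0$. Knowing that the left-hand side of \eqref{einstein} at $E_\beta$ equals $c\tr E_\beta+||\beta||^2\tr F$ and that $c<0$ is not enough either: nothing prevents $\tr F$ from being large enough to make this quantity strictly positive, so no contradiction and no forced equality arises from what you have written.

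The missing ingredient is a \emph{second} identity, obtained by testing \eqref{einstein} against the derivation $E=\ad_\pg H+D_\pg$ itself (so the right-hand side vanishes and $\tr(B_\pg E)=0$ by Remark~\ref{Bort}), namely
\[
c\,\tr F+\tr F^2=0.
\]
Combining this with $c\tr E_\beta+\tr(FE_\beta)\geq 0$ (from Lemma~\ref{pie}), the computation $\tr(FE_\beta)=||\beta||^2\tr F$, and $\tr E_\beta^2=||\beta||^2\tr E_\beta$, one manipulates to the \emph{reverse} Cauchy--Schwarz inequality $\tr F^2\cdot\tr E_\beta^2\leq(\tr FE_\beta)^2$, which forces $F=tE_\beta$. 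This is what establishes (iii) and simultaneously turns the inequality in Lemma~\ref{pie} into an equality; $\lambda_1=0$, (i), and (ii) then drop out termwise from the proof of that lemma. So the logical order is the reverse of what you outline: (iii) comes \emph{before} $\lambda_1=0$, (i), (ii), not afterwards via block-by-block tests (and your sketches for (a) and (b) are in any case too vague to stand on their own). The $\mu=0$ case likewise hinges on the identity $c\tr F+\tr F^2=0$ together with an ordinary Cauchy--Schwarz bound on the eigenvalues of $F|_\ngo$; your sketch omits this there as well.
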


\begin{proof}
In order to apply the results in Appendix \ref{git}, we identify $\ngo$ with $\RR^n$ via an orthonormal basis $\{ e_1,...,e_n\}$ of $\ngo$.  In this way, $\mu$ can be viewed as an element of $\nca\subset V$. If $\mu\ne 0$ then $\mu\in\sca_{\beta}$ for some $\beta\in\bca\subset\tg^+$  and there exists $h\in\Or(n)$ such that $\tilde{\mu}:=h\cdot\mu$ satisfies $\beta_{\tilde{\mu}}=\beta$; consequently, many extra nice properties of $\tilde{\mu}$ related to $\beta$ hold. Let $\tilde{h}:\ggo\longrightarrow\ggo$ be the map defined by $\tilde{h}|_{\kg\oplus\hg}=I$, $\tilde{h}|_{\ngo}=h$, and let $\tilde{\ggo}$ denote the Lie algebra with underlying vector space $\ggo$ and Lie bracket
$$
\tilde{h}\cdot[\cdot,\cdot]=\tilde{h}[\tilde{h}^{-1}\cdot,\tilde{h}^{-1}\cdot].
$$
It is easy to check that $(\tilde{\ggo}=\kg\oplus\pg,\ip)$ is also a metric reductive decomposition.  Since the condition $\lambda_1=0$ holds for  $(\tilde{\ggo}=\kg\oplus\pg,\ip)$ if and only if it does so for $\gkp$, we can assume
from now on that $\beta_{\mu}=\beta$, and thus we can use in the proof all the properties stated in Theorem \ref{strata} and Lemma \ref{pie}.

\begin{lemma}
Under the same hypotheses of the proposition, we have that
\begin{equation}\label{c}
c\tr{F}+\tr{F^2}=0,
\end{equation}
where $F=S(\ad_{\pg}{H}+D_{\pg})$.
\end{lemma}

\begin{remark}
This lemma actually holds without assuming that $c<0$.
\end{remark}

\begin{proof}
By letting $E=\ad_{\pg}{H}+D_{\pg}$ in (\ref{einstein}) we get
\begin{equation}\label{c1}
c\tr{F}+\unm\tr{B_{\pg}(\ad_{\pg}{H}+D_\pg)}+\tr{F^2}=
\unc\la\pi(\ad_{\pg}{H}+D_{\pg})\lb_{\pg},\lb_{\pg}\ra.
\end{equation}
According to \eqref{kH0}, Lemma \ref{Dpp} and \eqref{formD}, we have that
\[
\tilde{E}:=\left[ \begin{smallmatrix}  0&0\\ 0&\ad_{\pg}{H}+D_{\pg} \end{smallmatrix}\right] = \ad{H} + D \in \Der(\ggo).
\]
Thus $\ad_{\pg}{H}+D_{\pg}$ is a derivation of the algebra $(\pg,\lb_\pg)$, and so the right hand side of \eqref{c1} vanishes and from Remark \ref{Bort} we obtain that $\tr{B_{\pg}(\ad_{\pg}{H}+D_\pg)} = 0$. This concludes the proof of the lemma.
\end{proof}

Let us first assume that $\mu=0$, and apply (\ref{einstein}) to $E\in\End(\ggo)$ defined by $E|_{\hg}=0$, $E|_{\ngo}=I$.  Recall that $\tr{F|_{\ngo}}=\tr{F}$ thanks to Lemma \ref{Dpp}.  We therefore obtain from (\ref{einstein}) that
$$
\unc||\lambda_1||^2= c n+\tr{F}.
$$
If $n=0$ the claim is trivial. Otherwise, since $c<0$ we must have that $\tr{F}>0$, and so from (\ref{c}) we get
\begin{align*}
\unc||\lambda_1||^2 = & cn+\tr{F} = -\tfrac{\tr{F^2}}{\tr{F}}n+\tr{F} = \tfrac{\tr{F^2}}{\tr{F}} \left(\tfrac{(\tr{F})^2}{\tr{F^2}}-n\right) \\
\leq & \tfrac{\tr{F^2}}{\tr{F}} \left(\tfrac{(\tr{F|_{\ngo}})^2}{\tr{(F|_{\ngo})^2}} -n\right) \leq \tfrac{\tr{F^2}}{\tr{F}}
\left(\tfrac{(f_1+...+f_n)^2}{f_1^2+...+f_n^2}-n\right) \leq 0,
\end{align*}
where $f_1,\dots,f_n$ are the eigenvalues of $F|_{\ngo}$.  Thus $\lambda_1=0$, and we get in addition that $F|_\hg=0$ (since $\tr{F^2}=\tr{(F|_\ngo)^2}$) and also that $F|_{\ngo}=tI$ for some
$t\in\RR$, which must satisfy $t=\tfrac{\tr{F|_{\ngo}}}{n}=\tfrac{||H||^2+\tr{D_{\ngo}}}{n}$.

We now consider the case $\mu\ne 0$.  Recall that we can assume that $\mu$ satisfies $\beta_{\mu}=\beta$, and thus the right hand side of (\ref{einstein}) applied to $E_{\beta}$ (see (\ref{ebeta})) is $\geq 0$ by Lemma \ref{pie}. We therefore obtain from (\ref{einstein}) that
\begin{equation}\label{prueba1}
c\tr{E_{\beta}}+\tr{FE_{\beta}}\geq 0.
\end{equation}
In particular, we have that $F\ne 0$ since $c<0$ and $\tr{E_{\beta}}>0$ (see (\ref{betapos})), which implies that $\tr{F}>0$ and $c=-\tfrac{\tr{F^2}}{\tr{F}}$ by (\ref{c}). Recall that $\tr{\beta}=-1$ and hence
\begin{align}
\tr{E_{\beta}^2} =&\tr(\beta^2+||\beta||^4I+2||\beta||^2\beta)=||\beta||^2(1+n||\beta||^2-2) \label{prueba3} \\
=& ||\beta||^2(-1+n||\beta||^2)=||\beta||^2\tr{E_{\beta}}. \notag
\end{align}
On the other hand, we have by (\ref{betaort}) that
\begin{equation}\label{prueba4}
\tr{FE_{\beta}}=\tr{F|_{\ngo}(\beta+||\beta||^2)}=||\beta||^2\tr{F}.
\end{equation}
We now use (\ref{c}), (\ref{prueba1}), (\ref{prueba3}) and
(\ref{prueba4}) to obtain by a straightforward manipulation that
$$
\tr{F^2}\tr{E_{\beta}^2}\leq (\tr{FE_{\beta}})^2,
$$
a `backwards' Cauchy-Schwartz inequality.  This implies that $F=tE_{\beta}$ for some $t>0$, and so part (iii) follows (recall that conditions (a)-(c) hold by
(\ref{adHB}) and (\ref{formD})).  But we also get that equality holds in (\ref{prueba1}), which turns all the inequalities obtained in the proof of Lemma \ref{pie} into equalities.  In particular, $\lambda_1=0$ follows from (\ref{l2l3e1}), and part (i) does so from (\ref{mu}) and (\ref{delta}).

Finally, by using (\ref{l4}) and (\ref{adbeta}) one obtains part (ii), concluding the proof of the proposition.
\end{proof}

According to Proposition \ref{leo}, if $\gkp$ is a metric reductive decomposition of an expanding semi-algebraic soliton, then the matrices
of the adjoint maps and $D$ have the following simpler forms:
\begin{equation}\label{adH}
\begin{array}{cc}
\ad{H}=\left[\begin{smallmatrix}
0&0&0\\
0&\ad_{\lambda_0}{H}&0\\
0&0&\ad_{\eta}{H}\end{smallmatrix}\right], &

D=\left[\begin{smallmatrix}
0&0&0\\
0&D_{\hg}&0\\
0&0&D_\ngo\end{smallmatrix}\right],
\end{array}
\end{equation}
and for all $Y\in\hg$,
\begin{equation}\label{adYA}
\ad{Y}=\left[\begin{smallmatrix} 0&\ad_{\lambda_2}{Y}&0\\
\ad_{\nu_1}{Y}&\ad_{\lambda_0}{Y}&0\\
0&0&\ad_{\eta}{Y}\end{smallmatrix}\right].
\end{equation}

After the structural results obtained in Proposition \ref{leo}, we can now see what is the formula for $\mm $, the most complicated term of
$\Ricci$.

\begin{lemma}\label{calculoR}
Let $(G/K,g_{\ip})$ be a homogeneous space with any metric reductive decomposition $\gkp$, and consider the decompositions $\pg=\hg\oplus\ngo$ and $\lb_{\pg}=\lambda_0+...+\mu$ defined in {\rm (\ref{decp})} and {\rm (\ref{dec})}, respectively.  Suppose that $[\hg,\hg]\subseteq \kg \oplus \hg$ (i.e. $\lambda_1=0$). Then the symmetric operator $\mm $ defined in {\rm (\ref{R})} is given for all $Y\in\hg$, $X\in\ngo$ by
$$
\begin{array}{l}
\la \mm Y,Y\ra=\la \mm_{\lambda_0}Y,Y\ra-\unm\tr{\ad_{\eta}{Y}(\ad_\eta{Y})^t}, \\ \\
\la \mm X,X\ra=\la \mm_{\mu}X,X\ra+\unm\sum\la[\ad_\eta{Y_i},(\ad_\eta{Y_i})^t]X,X\ra,  \\ \\
\la \mm Y,X\ra=-\unm\tr{\ad_\eta{Y}(\ad_{\mu}{X})^t},
\end{array}
$$
where $\mm_{\lambda_0}$ and $\mm_{\mu}$ are defined as in {\rm (\ref{R})} by replacing $\lb_{\pg}$ with the brackets $\lambda_0:\hg\times\hg\longrightarrow\hg$ and $\mu:\ngo\times\ngo\longrightarrow\ngo$, respectively, and $\{ Y_i\}$ is any orthonormal basis of $\hg$.
\end{lemma}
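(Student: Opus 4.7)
The plan is a direct case-by-case computation from the defining formula \eqref{R} for $\mm$, exploiting the orthogonal decomposition $\pg=\hg\oplus\ngo$ and the vanishing $\lambda_1=0$. Choose an orthonormal basis of $\pg$ of the form $\{Y_i\}\cup\{X_k\}$, with $\{Y_i\}\subset\hg$ and $\{X_k\}\subset\ngo$ orthonormal, and recall that under the hypothesis the $\pg$-component of the bracket satisfies
\[
[Y,Y']_\pg=\lambda_0(Y,Y')\in\hg,\qquad [Y,X]=\eta(Y,X)=(\ad_\eta Y)X\in\ngo,\qquad [X,X']_\pg=\mu(X,X')\in\ngo,
\]
for $Y,Y'\in\hg$ and $X,X'\in\ngo$, so every bracket of two basis vectors lies either entirely in $\hg$ or entirely in $\ngo$.

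First I would compute $\la\mm Y,Y\ra$ for $Y\in\hg$. In the first sum of \eqref{R} split the basis index between $\hg$ and $\ngo$: the $\hg$-part reproduces the $\lambda_0$-contribution to $\mm_{\lambda_0}$, while the $\ngo$-part produces $-\tfrac12\sum_k\la\ad_\eta Y(X_k),\ad_\eta Y(X_k)\ra=-\tfrac12\tr\bigl(\ad_\eta Y\,(\ad_\eta Y)^t\bigr)$. In the second sum, brackets of pairs of $Y$'s lie in $\hg$ (and give the remaining $\mm_{\lambda_0}$-term), while mixed $\hg$-$\ngo$ or pure $\ngo$-$\ngo$ brackets lie in $\ngo\perp Y$ and vanish when paired with $Y$. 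This yields the first identity.

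Next I would compute $\la\mm X,X\ra$ for $X\in\ngo$. In the first sum, the $\hg$-part gives $-\tfrac12\sum_k\la\ad_\eta Y_k(X),\ad_\eta Y_k(X)\ra=-\tfrac12\sum_k\la(\ad_\eta Y_k)^t(\ad_\eta Y_k)X,X\ra$, while the $\ngo$-part is the $\mu$-contribution to $\mm_\mu$. In the second sum, pairs from $\hg\times\hg$ vanish against $X$; pure $\ngo\times\ngo$ pairs recover the remaining piece of $\mm_\mu$; the mixed pairs $(Y_k,X_\ell)$ and $(X_\ell,Y_k)$ occur with brackets $\pm\ad_\eta Y_k(X_\ell)$, so their contributions add to
\[
\tfrac12\sum_{k,\ell}\la\ad_\eta Y_k(X_\ell),X\ra^2=\tfrac12\sum_k\la(\ad_\eta Y_k)(\ad_\eta Y_k)^t X,X\ra.
\]
Combining the two $\hg$-terms gives the commutators $[\ad_\eta Y_k,(\ad_\eta Y_k)^t]$, producing the second identity.

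Finally, for $Y\in\hg$ and $X\in\ngo$, the second sum of \eqref{R} vanishes because no bracket in our basis has nontrivial components simultaneously on $\hg$ and on $\ngo$. In the first sum, the $\hg$-indexed terms pair a bracket in $\hg$ with one in $\ngo$ and hence vanish, while the $\ngo$-indexed terms yield
\[
-\tfrac12\sum_k\la\ad_\eta Y(X_k),\ad_\mu X(X_k)\ra=-\tfrac12\tr\bigl(\ad_\eta Y\,(\ad_\mu X)^t\bigr),
\]
which is the third identity. No step is a real obstacle; the only care needed is keeping track of the sign convention $[X,Y]=-\eta(Y,X)$ and of the doubling produced by the mixed $(Y_k,X_\ell)$ and $(X_\ell,Y_k)$ pairs in the double sum of \eqref{R}, which is precisely what converts two squared norms into the commutator $[\ad_\eta Y_k,(\ad_\eta Y_k)^t]$.
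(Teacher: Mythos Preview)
Your proof is correct and follows essentially the same direct computation as the paper: both fix an orthonormal basis $\{Y_i\}\cup\{X_k\}$ adapted to $\pg=\hg\oplus\ngo$, split each double sum in \eqref{R} according to whether basis vectors lie in $\hg$ or $\ngo$, and use $\lambda_1=0$ so that every bracket lands purely in one summand. The paper's proof is just your plan written out line by line, including the same observation that the two mixed contributions in the second sum combine to produce the commutator $[\ad_\eta Y_i,(\ad_\eta Y_i)^t]$.
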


\begin{remark}\label{remcalculoR}
The lemma actually holds for any direct sum decomposition $\pg = \hg \oplus \ngo$ in subspaces such that $[\hg,\hg]_\pg\subset\hg$ and $[\pg,\ngo]_\pg\subset\ngo$.
\end{remark}

\begin{proof}
Let $\{ X_i\}$ be an orthonormal basis of $\ngo$.  Since $\lambda_1=0$, the computation of $\mm $ for $Y\in\hg$, $X\in\ngo$ can be done as follows:
\begin{align*}
\la \mm Y,Y\ra =& -\unm\sum\la\lambda_0(Y,Y_i),Y_j\ra^2 -\unm\sum\la\eta(Y,X_i),X_j\ra^2 \\ &+ \unc\sum\la\lambda_0(Y_i,Y_j),Y\ra^2   \\
=& \la \mm_{\lambda_0}Y,Y\ra -\unm\sum\la\ad_{\eta}{Y}(X_i),\ad_{\eta}{Y}(X_i)\ra \\
=& \la \mm_{\lambda_0}Y,Y\ra  -\unm\sum\la(\ad_{\eta})^t{Y}\ad_{\eta}{Y}(X_i),X_i\ra \\
=& \la \mm_{\lambda_0}Y,Y\ra -\unm\tr{(\ad_{\eta}{Y})^t\ad_{\eta}{Y}},
\end{align*}

\begin{align*}
\la \mm X,X\ra =& -\unm\sum\la[X,Y_i],X_j\ra^2 -\unm\sum\la\mu(X,X_i),X_j\ra^2 \\
&+\unm\sum\la\eta(Y_i,X_j),X\ra^2 + \unc\sum\la\mu(X_i,X_j),X\ra^2 \\
=& \la \mm_{\mu}X,X\ra - \unm\sum\la\eta(Y_i,X),X_j\ra^2 +\unm\sum\la\eta(Y_i,X_j),X\ra^2 \\
=& \la \mm_{\mu}X,X\ra - \unm\sum\la\ad_{\eta}{Y_i}(X),\ad_{\eta}{Y_i}(X)\ra \\
&+ \unm\sum\la(\ad_{\eta}{Y_i})^t(X),(\ad_{\eta}{Y_i})^t(X)\ra \\
=& \la \mm_{\mu}X,X\ra + \unm\sum\la[\ad_{\eta}{Y_i},(\ad_{\eta}{Y_i})^t]X,X\ra,
\end{align*}

\begin{align*}
\la \mm Y,X\ra &= -\unm\sum\la\eta(Y,X_i),X_j\ra\la\mu(X,X_i),X_j\ra \\
&= -\unm\tr{\ad_{\eta}{Y}(\ad_{\mu}{X})^t},
\end{align*}
which concludes the proof of the lemma.
\end{proof}

We are finally prepared to prove the main structural result of this paper.

\begin{theorem}\label{main}
Let $(G/K,g)$ be a homogeneous space, and consider the metric reductive decomposition $\gkp$ such that $B(\kg,\pg)=0$.  Also consider the orthogonal decomposition $\pg=\hg\oplus\ngo$, where $\ngo$ is the nilradical of $\ggo$.  If $(G/K,g)$ is an expanding semi-algebraic soliton with cosmological constant $c<0$, then the following conditions hold:
\begin{itemize}
\item[(i)] $[\hg,\hg]\subset\kg\oplus\hg$.  In particular, $\ug=\kg\oplus\hg$ is a reductive Lie subalgebra of $\ggo$ and $\ggo=\ug\ltimes\ngo$ (semidirect product).
\item[ ]
\item[(ii)] $\Ricci_{\ug}=cI+C_{\hg}$, where $\Ricci_{\ug}$ is the Ricci operator of the metric reductive decomposition $(\ug=\kg\oplus\hg,\ip|_{\ug\times\ug})$ and $C_{\hg}$ is the symmetric map defined by
    $$
    \la C_{\hg}Y,Y\ra = \tr{S(\ad{Y}|_{\ngo})^2}, \qquad\forall Y\in\hg.
    $$
\item[(iii)] $\Ricci_{\ngo}=cI+D_1$, for some $D_1 \in \Der(\ngo)$, where $\Ricci_{\ngo}$ denotes the Ricci operator of the metric nilpotent Lie algebra $(\ngo,\ip|_{\ngo\times\ngo})$ (i.e. $(\ngo,\ip|_{\ngo\times\ngo})$ is a nilsoliton).
\item[ ]
\item[(iv)] $\sum [\ad{Y_i}|_{\ngo},(\ad{Y_i}|_{\ngo})^t]=0$, where $\{ Y_i\}$ is any orthonormal basis of $\hg$ (in particular, $(\ad{Y}|_{\ngo})^t\in\Der(\ngo)$ for all $Y\in\hg$).

\item[ ]
\item[(v)] The Ricci operator of $(G/K,g)$ is given by $\Ricci=cI+S(D_{\pg})$, where
$$
D:=-\ad{H}+ \left[\begin{smallmatrix} 0&&\\ &0&\\ &&D_1\\\end{smallmatrix}\right]\in\Der(\ggo).
$$
\end{itemize}
Conversely, if conditions {\rm (i)-(iv)} hold for some metric reductive decomposition (not necessarily with $B(\kg,\pg)=0$) and $G/K$ is simply connected, then $(G/K,g)$ is a semi-algebraic soliton with cosmological constant $c$ and derivation $D$ as above.
\end{theorem}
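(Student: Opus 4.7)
The plan is to prove both directions by combining Proposition \ref{leo}, Lemma \ref{calculoR}, and the orthogonality/commutation properties of the moment map from Remarks \ref{Rort} and \ref{Mcomm}.

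\textbf{Forward direction.} Assume $(G/K,g)$ is an expanding semi-algebraic soliton with $\Ricci = cI + S(D_\pg)$, $c<0$, for some derivation $D \in \Der(\ggo)$ with $D\kg = 0$ (Proposition \ref{semi-car}). Proposition \ref{leo} yields (i) since $\lambda_1 = 0$, and combined with \eqref{formD} it shows that $D_\pg$ is block-diagonal on $\pg = \hg \oplus \ngo$; moreover \eqref{adHB} makes $\ad_\pg H$ block-diagonal as well. In the generic case $\mu \neq 0$ one also extracts from Proposition \ref{leo} the identities $\beta + \|\beta\|^2 I \in \Der(\ngo)$, $[\beta, \ad_\eta Y] = 0$ for $Y \in \hg$, and $S(\ad_\pg H + D_\pg)|_\ngo = t(\beta + \|\beta\|^2 I)$ with $t > 0$; the case $\mu = 0$ is analogous and simpler.

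Applying Lemma \ref{calculoR} to $\Ricci = \mm - \unm B_\pg - S(\ad_\pg H) = cI + S(D_\pg)$ restricted to $\ngo$ gives
\[
\mm_\mu = cI + t(\beta + \|\beta\|^2 I) - \tfrac{1}{2}A, \qquad A := \sum_i [\ad_\eta Y_i,(\ad_\eta Y_i)^t].
\]
The main obstacle is establishing (iv), i.e.\ $A = 0$. Observe that $A$ is symmetric, trace-free (a sum of commutators) and Frobenius-orthogonal to $\beta + \|\beta\|^2 I$ (using $[\beta, \ad_\eta Y_i] = 0$). Pairing the displayed identity with $A$ yields $\la \mm_\mu, A\ra = -\tfrac{1}{2}\|A\|^2$; on the other hand, arguing that $A \in \Der(\ngo)$ combined with the orthogonality $\mm_\mu \perp \Der(\ngo)$ from Remark \ref{Rort} forces $\|A\|^2 = 0$. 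Once $A = 0$, (iii) is immediate: $D_1 := t(\beta + \|\beta\|^2 I)$ is a symmetric derivation and $\Ricci_\ngo = \mm_\mu = cI + D_1$.

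To conclude (v), redefine $D := -\ad H + \Delta$, where $\Delta$ vanishes on $\kg \oplus \hg$ and equals $D_1$ on $\ngo$. That $\Delta$ (and hence $D$) is a derivation of $\ggo$ follows from $[D_1, \ad Z|_\ngo] = 0$ for $Z \in \kg$ (by $\ad\kg$-invariance of $\Ricci_\ngo$) and $[D_1, \ad Y|_\ngo] = 0$ for $Y \in \hg$ (by (iv) through Remark \ref{Mcomm}). A direct verification gives $\Ricci = cI + S(D_\pg)$ with this redefined $D$. Finally, (ii) follows from the identity $B_\ggo|_\hg = B_\ug|_\hg + \tilde B$, where $\la \tilde B Y, Y'\ra := \tr(\ad_\eta Y\,\ad_\eta Y')$, combined with $\Ricci_\ug = \mm_{\lambda_0} - \unm B_\ug|_\hg$ (valid since $\ug$ is reductive and $H_\ug = 0$) and the $\hg$-block formula for $\mm$ from Lemma \ref{calculoR}; a short calculation identifies the outcome as $\Ricci_\ug = cI + C_\hg$.

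\textbf{Converse.} Given (i)-(iv), set $D_1 := \Ricci_\ngo - cI \in \Der(\ngo)$ (symmetric by (iii)) and define $D := -\ad H + \Delta$ as before. The derivation property of $D$ is established by the same commutativity checks, invoking (iv) via Remark \ref{Mcomm}. Reversing the forward computation via Lemma \ref{calculoR} and using $A = 0$ from (iv) produces $\Ricci = cI + S(D_\pg)$ on the $\hg$- and $\ngo$-diagonal blocks of $\pg$; the vanishing of the $\hg$--$\ngo$ off-diagonal block reduces to $\tr(\ad_\eta Y(\ad_\mu X)^t) = 0$, which follows from $(\ad_\eta Y)^t \in \Der(\ngo)$ (a consequence of (iv)) together with the trace-freeness of the inner derivation $\ad_\mu X$ of $\ngo$, via a short Jacobi-identity computation. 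Simple connectedness of $G/K$ then allows Proposition \ref{semi-car} to conclude that $(G/K,g)$ is a semi-algebraic soliton with cosmological constant $c$ and derivation $D$.
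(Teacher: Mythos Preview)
Your overall architecture is right, but the crucial step for part (iv) contains a genuine gap. You write ``arguing that $A\in\Der(\ngo)$ combined with $\mm_\mu\perp\Der(\ngo)$ forces $\|A\|^2=0$,'' where $A=\sum_i[\ad_\eta Y_i,(\ad_\eta Y_i)^t]$. But at this point you only know that each $\ad_\eta Y_i$ is a derivation of $(\ngo,\mu)$; you do \emph{not} know that $(\ad_\eta Y_i)^t$ is one, and a commutator $[D,D^t]$ with $D\in\Der(\ngo)$ is in general \emph{not} a derivation (for instance, on the Heisenberg algebra take $D=E_{31}$: then $[D,D^t]=\operatorname{diag}(-1,0,1)\notin\Der(\ngo)$). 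So the claim $A\in\Der(\ngo)$ is unjustified and in fact circular: showing $(\ad_\eta Y_i)^t\in\Der(\ngo)$ is precisely the content of (iv).

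The gap is, however, easily repaired, and the repair is actually cleaner than the paper's argument. Instead of claiming $A$ is a derivation, compute $\la\mm_\mu,A\ra$ directly via the identity \eqref{adYtder}:
\[
\la\mm_\mu,A\ra=\sum_i\tr\mm_\mu[\ad_\eta Y_i,(\ad_\eta Y_i)^t]=\tfrac14\sum_i\|\pi((\ad_\eta Y_i)^t)\mu\|^2\ge 0.
\]
Combined with your own computation $\la\mm_\mu,A\ra=-\tfrac12\|A\|^2\le 0$ (using $\tr A=0$ and $\la\beta,A\ra=0$ from $[\beta,\ad_\eta Y_i]=0$), this forces simultaneously $A=0$ and $(\ad_\eta Y_i)^t\in\Der(\ngo)$, giving (iv) and then (iii) immediately. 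By contrast, the paper pairs the same equation with $\mm_\mu$ rather than with $A$, which produces an extra term that is controlled only by invoking the GIT estimate $\|\beta\|\le\|m(\mu)\|$ from \eqref{bmu}; your route (once fixed) avoids this appeal entirely.

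One minor comment on the converse: your justification for the vanishing of the $\hg$--$\ngo$ block, namely ``$(\ad_\eta Y)^t\in\Der(\ngo)$ together with trace-freeness of $\ad_\mu X$ via a short Jacobi computation,'' is too vague as stated; a Jacobi manipulation alone only yields the tautology $\tr\ad_\mu(DX)=0$. The clean argument (as in the paper) is that once $\ad_\eta Y$ and its transpose are both derivations, $\ad_\eta Y$ is block-diagonal with respect to the orthogonal lower-central-series decomposition $\ngo=\ngo_1\oplus\cdots\oplus\ngo_r$, whereas $(\ad_\mu X)^t$ is strictly block-upper-triangular, so their product has zero trace.
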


\begin{remark}
In particular, these structural results apply to any Einstein homogeneous space of negative scalar curvature, as such spaces are all expanding semi-algebraic solitons with respect to any reductive decomposition.
\end{remark}

\begin{remark}
In part (ii), $\Ricci_{\ug}$ is defined as in (\ref{ricci}) and it is actually the Ricci operator of the homogeneous space $U/K_0$ endowed with the $U$-invariant metric determined by $\ip|_{\hg\times\hg}$, where $U$ is the connected Lie subgroup of $G$ with Lie algebra $\ug$ and $K_0$ is the connected component of the identity of $K$.  We note that $U/K_0$ is almost-effective if and only if the kernel of the map $\kg\longrightarrow\End(\hg)$, $Z\mapsto\ad{Z}|_\hg$, does vanish.
\end{remark}

\begin{proof}
We consider the metric reductive decomposition $(\wt{\ggo}=\kg\oplus\pg,\ip)$ defined at the beginning of the proof of Proposition \ref{leo}, for which the corresponding $\wt{\mu}=h\cdot\mu$ ($h\in\Or(n)$) satisfies $\beta_{\wt{\mu}}=\beta$. The Ricci operator of $(\wt{\ggo}=\kg\oplus\pg,\ip)$ is given by $\wt{\Ricci}=h \Ricci h^{-1}$.  Moreover,  it is easy to see that
$$
\wt{\Ricci}_{\ug}=\Ricci_{\ug}, \quad \wt{\ad}{Y}|_{\ngo}=h\ad{Y}|_{\ngo}h^{-1}, \quad \wt{C}_{\hg}=C_{\hg}, \quad \wt{\Ricci}_{\ngo}=h\Ricci_{\ngo}h^{-1},
$$
for all $Y\in\hg$, from which follows that conditions (i)-(iv) hold for $(\ggo=\kg\oplus\pg,\ip)$ if and only if they do so for $(\wt{\ggo}=\kg\oplus\pg,\ip)$.  This allows us to assume from now on that condition $\beta_{\mu}=\beta$ holds for our $\mu$ if nonzero and so we can use all the properties stated in Theorem \ref{strata} and Lemma \ref{pie}.

We first suppose that $(G/K,g)$ is a semi-algebraic soliton as stated in the theorem, say with $\Ricci=cI+S(D_{\pg})$, $c<0$.  Recall that some restrictions on the form of $D$ have already been proved (see (\ref{adH})).  The notation introduced in Section \ref{pre} will be constantly used along the proof of the theorem. Part (i) follows from Proposition \ref{leo}. In order to prove parts (iii) and (iv) we need the following result.

\begin{lemma}\label{adY}
For any $Y\in\hg$, $(\ad{Y}|_{\ngo})^t$ is a derivation of $\ngo$.
\end{lemma}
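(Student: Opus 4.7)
The plan is to apply equation \eqref{einstein} to a carefully chosen test operator $E\in\End(\pg)$ and then read off the desired derivation property from the resulting identity, once both sides are rearranged into sums of manifestly non-negative terms adding to zero.  Write $A_Y:=\ad{Y}|_\ngo$ for $Y\in\hg$, fix an orthonormal basis $\{Y_i\}$ of $\hg$, and set $C_\ngo:=\sum_i[A_{Y_i},A_{Y_i}^t]\in\End(\ngo)$.  The test operator is $E\in\End(\pg)$ with $E|_\hg=0$ and $E|_\ngo=C_\ngo$ (the case $\mu=0$ is trivial since $\ngo$ is then abelian).

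For the left-hand side of \eqref{einstein}, $\tr{E}=0$ because $C_\ngo$ is a sum of commutators, $\tr{B_\pg E}=0$ because $B_\pg|_\ngo=0$ by \eqref{adHB}, and writing $F=tE_\beta$ from Proposition \ref{leo}(iii),
$$
\tr{FE}=t\tr{(\beta+||\beta||^2I)C_\ngo}=t\sum_i\tr{[\beta,A_{Y_i}]A_{Y_i}^t}=0,
$$
where we use $\tr{C_\ngo}=0$, the cyclic identity $\tr{\beta[A,A^t]}=\tr{[\beta,A]A^t}$, and $[\beta,A_{Y_i}]=0$ from Proposition \ref{leo}(ii).

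For the right-hand side $\unc\la\pi(E)\lb_\pg,\lb_\pg\ra$, $\pi(E)\lambda_0=0$ because $E|_\hg=0$, and $\lambda_1=0$ by Proposition \ref{leo}, so only $\eta$ and $\mu$ contribute.  From $(\pi(E)\eta)(Y_i,\cdot)=[C_\ngo,A_{Y_i}]$ and cyclicity of trace,
$$
\la\pi(E)\eta,\eta\ra=\sum_i\tr{[C_\ngo,A_{Y_i}]A_{Y_i}^t}=\tr{C_\ngo^2}=||C_\ngo||^2.
$$
For the $\mu$-contribution I would invoke two standard facts about $\pi$: it is a Lie algebra representation, and with respect to the inner product \eqref{innV} on $\Lambda^2\ngo^*\otimes\ngo$ one has $\pi(A)^t=\pi(A^t)$ (a routine index computation).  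Combined with $\pi(A_{Y_i})\mu=0$ (which expresses $A_{Y_i}\in\Der(\ngo)$), this yields
$$
\la\pi([A_{Y_i},A_{Y_i}^t])\mu,\mu\ra=\la\pi(A_{Y_i})\pi(A_{Y_i}^t)\mu,\mu\ra=||\pi(A_{Y_i}^t)\mu||^2,
$$
so $\la\pi(E)\mu,\mu\ra=\sum_i||\pi(A_{Y_i}^t)\mu||^2$.  Equation \eqref{einstein} therefore becomes
$$
0=||C_\ngo||^2+\sum_i||\pi(A_{Y_i}^t)\mu||^2,
$$
and each non-negative summand must vanish.  In particular $\pi(A_{Y_i}^t)\mu=0$, i.e.\ $A_{Y_i}^t\in\Der(\ngo)$ for every $i$, and by linearity of $Y\mapsto A_Y^t$ the same holds for any $Y\in\hg$; as a byproduct $C_\ngo=0$, which is precisely assertion (iv) of Theorem \ref{main}.

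The main obstacle is identifying the right test operator $E$; once $E|_\ngo=C_\ngo$ is adopted, the structural information already extracted in Proposition \ref{leo} (namely $[\beta,A_Y]=0$ and $F=tE_\beta$) kills the left-hand side, while the algebraic identity $\pi([A,A^t])\mu=\pi(A)\pi(A^t)\mu$ (valid when $\pi(A)\mu=0$) together with $\pi(A)^t=\pi(A^t)$ exhibits the right-hand side as a sum of squares.
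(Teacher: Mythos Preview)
Your argument is correct (modulo a harmless missing factor of $2$ in the $\eta$-term: with the paper's convention \eqref{innV} one actually gets $\la\pi(E)\eta,\eta\ra=2\|C_\ngo\|^2$, as in the computation \eqref{l4}, but this does not affect the conclusion).

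Your route is genuinely different from the paper's, and in some respects more elementary.  The paper proceeds by first using Lemma \ref{calculoR} together with the soliton equation $\Ricci|_\ngo=cI+S(D_\ngo)$ to derive the identity \eqref{p4} on $\ngo$, then multiplies by $\mm_\mu$ and invokes the moment-map estimate \eqref{bmu} from the GIT stratification to force $\pi(A_{Y_i}^t)\mu=0$.  You instead plug the single test operator $E|_\ngo=C_\ngo$ directly into \eqref{einstein}; Proposition \ref{leo}(ii)--(iii) kill the left-hand side, and the right-hand side is exhibited as a sum of squares using only the representation-theoretic facts $\pi(A)^t=\pi(A^t)$ and $\pi(A_{Y_i})\mu=0$.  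This bypasses the inequality \eqref{bmu} entirely and simultaneously yields $C_\ngo=0$, i.e.\ part (iv) of Theorem \ref{main}, which the paper obtains only afterwards via orthogonality in \eqref{p4}.  What the paper's longer route buys is the extra information $m(\mu)=\beta$ and $c=-\unc\|\mu\|^2\|\beta\|^2$ recorded in Proposition \ref{extras}; your argument does not produce those formulas.
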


\begin{proof}
For $\mu=0$ (i.e. $\ngo$ abelian), the assertion is trivially true.  If $\mu\ne 0$ and $\mu\in\sca_{\beta}$ then we know from Proposition \ref{leo}, (iii), (c) that $F|_{\ngo}=t(\beta+||\beta||^2I)$, where $F=S(\ad_{\pg}{H}+D_{\pg})$, and since $F|_\ngo$ is a derivation of $\ngo$ by Proposition \ref{leo}, (i), we get from by (\ref{betaort}) that
$$
\tr{(F|_{\ngo})^2} = t||\beta||^2\tr{F|_{\ngo}},
$$
But it follows from Proposition \ref{leo}, (iii) that $\tr{F|_\ngo} = \tr{F}$ and $\tr{(F|_\ngo)^2} = \tr{F^2}$, so we can use \eqref{c} to conclude that $t=-\tfrac{c}{||\beta||^2}$ and thus
\begin{equation}\label{p3}
F|_{\ngo}=-\tfrac{c}{||\beta||^2}\beta-cI.
\end{equation}
Recall that $F\ne 0$ by (\ref{prueba1}), and consequently $\tr{F|_{\ngo}}=\tr{F}>0$ by (\ref{einstein}).

On the other hand, since $\Ricci|_{\ngo}=cI+S(D_{\ngo})$, it follows from Lemma \ref{calculoR} (which we can use thanks to part (i)) that
$$
\mm_{\mu} +\unm\sum\left[\ad_{\eta}{Y_i},(\ad_{\eta}{Y_i})^t\right]- F|_{\ngo}=cI,
$$
and hence (\ref{p3}) implies the following equality on $\ngo$:
\begin{equation}\label{p4}
\mm_{\mu}+\unm\sum\left[\ad_{\eta}{Y_i},(\ad_{\eta}{Y_i})^t\right]
+\tfrac{c}{||\beta||^2}\beta =0.
\end{equation}
By taking trace in the above equality and using that $\tr{\mm_{\mu}}=-\unc||\mu||^2$ and $\tr{\beta}=-1$, we obtain that
\begin{equation}\label{p5}
c=-\unc||\beta||^2||\mu||^2.
\end{equation}
It also follows from (\ref{p4}) that
$$
0= \tr{\mm_{\mu}^2}+
\unm\sum\tr{\mm_{\mu}\left[\ad_{\eta}{Y_i},(\ad_{\eta}{Y_i})^t\right]}
+\tfrac{c}{||\beta||^2}\tr{\mm_{\mu}\beta}.
$$
Analogously to (\ref{Rmm}), we have that
$$
\tr{\mm_{\mu}E}=\unc\la \pi(E)\mu,\mu\ra, \qquad\forall E\in\End(\ngo),
$$
or equivalently, $m(\mu)=\tfrac{4}{||\mu||^2}\mm_{\mu}$ for all $\mu\in V$ (see (\ref{mmv})).  Thus, for all $i$,
\begin{align}
\tr{\mm_{\mu}[\ad_{\eta}{Y_i},(\ad_{\eta}{Y_i})^t]} =& \unc\la\pi([\ad_\eta{Y_i}|_{\ngo},(\ad_\eta{Y_i})^t])\mu,\mu\ra \notag \\
=& \unc\la\pi(\ad_\eta{Y_i})\pi((\ad_\eta{Y_i})^t)\mu,\mu\ra \label{adYtder} \\
=& \unc\la\pi((\ad_\eta{Y_i})^t)\mu,\pi((\ad_\eta{Y_i})^t)\mu\ra \notag \\
=& \unc||\pi((\ad_\eta{Y_i})^t)\mu||^2. \notag
\end{align}
This and (\ref{p5}) give
\begin{align*}
0=& \tr{\mm_{\mu}^2}+ \tfrac{1}{8}\sum||\pi((\ad_{\eta}{Y_i})^t)\mu||^2
-\tfrac{||\mu||^2}{4}\la \mm_{\mu},\beta\ra \\
=& \tfrac{1}{8}\sum||\pi((\ad_{\eta}{Y_i})^t)\mu||^2
+\tfrac{||\mu||^4}{16}\left(\tfrac{16}{||\mu||^4}\tr{\mm_{\mu}^2}
-\left\la\tfrac{4}{||\mu||^2}\mm_{\mu},\beta\right\ra\right)\\
=& \tfrac{1}{8}\sum||\pi((\ad_{\eta}{Y_i})^t)\mu||^2
+\tfrac{||\mu||^4}{16}\left(|| m(\mu)||^2 -\left\la m(\mu),\beta\right\ra\right).
\end{align*}
But since $\mu$ satisfies $\beta_{\mu}=\beta$ we obtain from (\ref{bmu}) that
$$
\la m(\mu),\beta\ra\leq ||m(\mu)||\;||\beta||\leq || m(\mu)||^2,
$$
which implies that $\tfrac{1}{8}\sum||\pi((\ad_{\eta}{Y_i})^t)\mu||^2=0$.  Thus $(\ad_{\eta}{Y_i})^t\in\Der(\ngo)$ for all $i$ and the lemma follows.
\end{proof}

\begin{remark}\label{Dnorm}
By applying \eqref{adYtder} to any metric Lie algebra, one obtains that the transpose of any normal derivation is always a derivation as well.
\end{remark}

If $\mu=0$, then parts (iii) and (iv) follow directly from the fact that $F|_{\ngo}=tI$ (see Proposition \ref{leo}) and the formula for $\Ricci|_{\ngo}$ and $\mm |_{\ngo}$ (see Lemma \ref{calculoR}).  We also obtain that $t=-c$.

For $\mu\ne 0$, we obtain from (\ref{p4}) and Lemma \ref{adY} that
$$
\mm_{\mu}+\tfrac{c}{||\beta||^2}\beta=0, \qquad
\sum\left[\ad_{\eta}{Y_i},(\ad_{\eta}{Y_i})^t\right]=0,
$$
as they are orthogonal maps by Remark \ref{Rort} and (\ref{betaort}). This implies part (iv), and part (iii) follows from the first equation above, since by (\ref{p3}),
$$
\mm_{\mu}=-\tfrac{c}{||\beta||^2}\beta =cI+F|_{\ngo}.
$$
Regarding (ii), we know from Lemma \ref{calculoR} that
\begin{equation}\label{p6}
\la \mm_{\lambda_0}Y,Y\ra = \la \mm Y,Y\ra + \unm\tr{\ad_{\eta}{Y}(\ad_\eta{Y})^t}.
\end{equation}
On the other hand, since $\ug$ is unimodular as it is reductive, we have
\begin{equation}\label{p7}
\la \Ricci_\ug Y, Y \ra = \la \mm_{\lambda_0}Y,Y\ra - \unm \tr{(\ad{Y}|_\ug)^2}, \quad \forall Y\in \hg.
\end{equation}
So, using that $F|_\hg = 0$, $\Ricci|_\hg = c I$ and $\tr{(\ad{Y})^2} = \tr{(\ad{Y}|_\ug)^2} + \tr{(\ad_\eta {Y})^2}$, together with \eqref{p6} and \eqref{p7}, yields the desired formula for $\Ricci_\ug$.  This concludes the proof of the first part of the proposition.

\vs

Conversely, let us assume that conditions (i)-(iv) hold.  We first recall that (iv) implies that $(\ad_{\eta}{Y})^t$ is a derivation of $\ngo$ for any $Y\in\hg$, by using \eqref{adYtder}. Also, since we have by (i) that $\lambda_1 = 0$ we may use Lemma \ref{calculoR} and compute $\Ricci$ as follows.

In the first place, (ii) and \eqref{p7} tell us that
\begin{align}
\la(\mm - \unm B_\pg) Y,Y\ra =& \la \mm_{\lambda_0}Y,Y\ra - \unm\tr{\ad_{\eta}{Y}(\ad_\eta{Y})^t} - \unm\tr{(\ad{Y})^2} \notag  \\
=& \la \Ricci_\ug Y, Y \ra + \unm \tr{(\ad{Y}|_\ug)^2} - \unm\tr{(\ad{Y})^2} \notag \\ & - \unm\tr{\ad_{\eta}{Y}(\ad_\eta{Y})^t} \notag \\
=& \la \Ricci_\ug Y, Y \ra  - \unm\tr{(\ad_\eta {Y})^2} - \unm\tr{\ad_{\eta}{Y}(\ad_\eta{Y})^t} \label{ric1} \\
=& c \| Y \|^2, \notag
\end{align}
for all $Y\in \hg$, by part (ii). Moreover, from \eqref{adHB} one gets
\begin{equation}\label{ric2}
\la(\mm - \unm B_\pg) Y,X\ra = -\unm\tr{\ad_\eta{Y}(\ad_{\mu}{X})^t} = 0, \quad \forall Y\in \hg, X\in \ngo,
\end{equation}
since $(\ad_{\eta}{Y})^t$ is also a derivation of $\ngo$ and so if $\ngo=\ngo_1\oplus...\oplus\ngo_r$ is the orthogonal decomposition with $[\ngo,\ngo]=\ngo_2\oplus...\oplus\ngo_r$, $[\ngo,[\ngo,\ngo]]=\ngo_3\oplus...\oplus\ngo_r$, and so on, then $\ad_{\eta}{Y}$ leave the subspaces $\ngo_i$ invariant and $\ad_{\mu}{X}(\ngo_i)\subset\ngo_{i+1}\oplus...\oplus\ngo_r$ for all $i$. And also from \eqref{adHB} and (iv) we obtain
\begin{equation}\label{ric3}
\la(\mm - \unm B_\pg) X,X\ra = \la \mm_\mu X,X\ra = \la \Ricci_\ngo X, X \ra.
\end{equation}
By putting \eqref{ric1}, \eqref{ric2} and \eqref{ric3} together and by using (iii) we conclude that
\[
\Ricci = cI + S(D_\pg), \qquad \mbox{for} \quad D = -\ad{H} + \tilde{D_1},
\]
where $\tilde{D_1}$ is defined by $\tilde{D_1}|_{\kg \oplus \hg} = 0$, $\tilde{D_1}|_\ngo = D_1$.  It only remains to show that $\tilde{D_1} \in \Der(\ggo)$, as we can then apply Proposition \ref{semi-car} to obtain that $(G/K,g)$ is a semi-algebraic soliton.

To do that, recall that $D_1\in \Der(\ngo)$ and so it is easy to see that it suffices to prove that
\begin{equation}\label{p8}
[\ad{(Y+Z)}|_\ngo, D_1] = 0,
\end{equation}
for all $Y\in \hg$, $Z\in \kg$. But $D_1$ commutes with every derivation $E$ of $\ngo$ whose transpose is also a derivation since $M_\mu=cI+D_1$ (see Remark \ref{Mcomm}), and therefore
\eqref{p8} follows from the fact that $(\ad(Y)|_\ngo)^t\in \Der(\ngo)$ and $(\ad{Z}|_\ngo)^t = -\ad Z|_{\ngo}$, for all $Y\in \hg$, $Z\in \kg$.  This concludes the proof of the theorem.
\end{proof}

\begin{corollary}\label{solv-red}
Let $(G/K,g)$ be an expanding semi-algebraic soliton as in Theorem \ref{main}.

\begin{itemize}
\item[(i)] If $[\hg,\hg]=0$, then $(G/K,g)$ is isometric to a solvsoliton.  This in particular holds when $\ggo$ is solvable.

\item[(ii)] If $\ggo$ is semisimple, then $(G/K,g)$ is Einstein with $\ricci(g)=cg$.
\end{itemize}
\end{corollary}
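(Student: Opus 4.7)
For part (i), my plan is to realize the soliton directly as a left-invariant metric on a solvable subgroup of $G$. Setting $\sg:=\hg\oplus\ngo$, the assumption $[\hg,\hg]=0$ forces $\lambda_0=\lambda_2=0$, so $\sg$ is a Lie subalgebra of $\ggo$; it is solvable because $\hg$ is abelian and $\ngo$ is a nilpotent ideal. Since $\sg=\pg$ as vector spaces, the connected subgroup $S\subseteq G$ with Lie algebra $\sg$ has an open orbit through $o$, hence acts transitively (and locally simply transitively by isometries) on $G/K$ by connectedness. Passing to simply connected covers if necessary, this realizes $(G/K,g)$ as a left-invariant metric on $S$ corresponding to the inner product $\ip|_\sg$. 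What remains is to show that this left-invariant metric is a solvsoliton, i.e.\ that the operator $S(D_\pg)$ from Theorem \ref{main}(v) is a derivation of $\sg$.

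Using $D=-\ad H+\tilde{D_1}$ together with $\ad H|_\hg=\ad_{\lambda_0}H=0$ and $\tilde{D_1}|_\hg=0$, I first compute $S(D_\pg)|_\hg=0$ and $S(D_\pg)|_\ngo=D_1-S(\ad_\eta H)$, which is a derivation of $\ngo$ by Theorem \ref{main}(iii)--(iv). The main obstacle I foresee is verifying compatibility with the mixed bracket: because the bracket of $Y\in\hg$ with $X\in\ngo$ in $\sg$ is $\eta(Y,X)$, the derivation condition on $\sg$ reduces to $[S(D_\pg)|_\ngo,\ad_\eta Y]=0$ for every $Y\in\hg$. To establish this I would combine two ingredients: Remark \ref{Mcomm} applied to $\mm_\mu=cI+D_1$ (valid since by Theorem \ref{main}(iv) both $\ad_\eta Y$ and its transpose are derivations of $\ngo$) gives $[D_1,\ad_\eta Y]=0$; and Proposition \ref{leo}(iii)(c), which expresses $S(\ad_\eta H)$ in terms of $\beta$ and $D_1$, combined with $[\beta,\ad_\eta Y]=0$ from Proposition \ref{leo}(ii), yields $[S(\ad_\eta H),\ad_\eta Y]=0$ (the case $\mu=0$ is handled separately, where $S(\ad_\eta H)$ is a scalar operator by Proposition \ref{leo}). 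The ``in particular'' clause then follows since a Lie algebra which is both solvable and reductive is abelian; when $\ggo$ is solvable, the reductive subalgebra $\ug=\kg\oplus\hg$ is therefore abelian and $[\hg,\hg]=0$ holds automatically.

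Part (ii) is then an immediate corollary: if $\ggo$ is semisimple, its nilradical $\ngo$ vanishes, and semisimplicity (hence unimodularity of $\ggo$) forces $H=0$ by (\ref{defH}). Both terms of $D=-\ad H+\tilde{D_1}$ therefore vanish, so Theorem \ref{main}(v) yields $\Ricci=cI$ and $g$ is Einstein with $\ricci(g)=cg$.
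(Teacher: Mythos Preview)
Your overall strategy for part (i)---realizing $(G/K,g)$ as a left-invariant metric on the solvable subgroup $S$ with Lie algebra $\sg=\hg\oplus\ngo=\pg$---matches the paper's approach, as does your treatment of the ``in particular'' clause. The paper, however, does not verify the solvsoliton condition by showing $S(D_\pg)\in\Der(\sg)$ directly; it instead invokes an external criterion (\cite[Proposition~4.3]{solvsolitons}) after checking conditions (ii)--(iv) of Theorem~\ref{main}. Your self-contained route is a reasonable alternative, but it has a gap.

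You claim that Proposition~\ref{leo}(iii)(c) ``expresses $S(\ad_\eta H)$ in terms of $\beta$ and $D_1$,'' but it does not. With the derivation $D$ of Theorem~\ref{main}(v) one has $D|_\ngo=D_1-\ad_\eta H$, so condition (iii)(c) reads
\[
S\big(\ad_\eta H + D|_\ngo\big)=S(D_1)=D_1=t(\beta+\|\beta\|^2I),
\]
which gives information about $D_1$, not about $S(\ad_\eta H)$ alone. Your argument for $[D_1,\ad_\eta Y]=0$ via Remark~\ref{Mcomm} is fine (and is exactly how the paper argues at the end of the proof of Theorem~\ref{main}), but the commutation $[S(\ad_\eta H),\ad_\eta Y]=0$ remains unjustified.

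The cleanest fix is to bypass this computation entirely using Proposition~\ref{equiv-algsol}: since $[\hg,\hg]=0$ forces $\ad H|_\hg=\ad_{\lambda_0}H=0$, condition (vi) there holds trivially, and the equivalent condition (ii) yields $S(D_\pg)\in\Der(\lb_\pg)=\Der(\sg)$ at once. This makes the $\beta$-argument unnecessary.

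For part (ii) your argument is correct. The paper reaches the same conclusion via Theorem~\ref{main}(ii): with $\ngo=0$ one has $C_\hg=0$ and $\Ricci=\Ricci_\ug=cI$. Your route through $H=0$ and Theorem~\ref{main}(v) is an equally valid alternative.
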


\begin{remark}
Part (ii) has already been proved in \cite[Theorem 1.6]{Jbl}.
\end{remark}

\begin{proof}
We first prove part (i).  It is easy to see that $(G/K,g)$ is isometric to the left-invariant metric defined by $\ip$ on the connected solvable Lie subgroup $S$ of $G$ with Lie algebra $\hg\oplus\ngo$, which is easily seen to be a solvsoliton by using parts (ii)-(iv) of Theorem \ref{main} and applying \cite[Proposition 4.3]{solvsolitons}.  Recall that $G$ can be assumed to be simply connected without losing almost-effectiveness, and so $S$ is simple connected as $G=K\times S$ as differentiable manifolds.  We note that if $\ggo$ is solvable then actually the whole Lie subalgebra $\ug$ must be abelian.

Part (ii) follows from the fact that $\ngo=0$ and so $C_\hg=0$ (see Theorem \ref{main}, (ii)).
\end{proof}

Along the proof of Theorem \ref{main}, the following extra structural properties
have been obtained.

\begin{proposition}\label{extras}
Under the same hypothesis of Theorem \ref{main}, if say $\Ricci=cI+S(D_\pg)$, and $F = S(\ad_\pg H + D_\pg)$, then $D_1=S(\ad{H}|_{\ngo}+D|_{\ngo})$ and
$$
[\ad{\ug}|_\ngo, D_1]=0, \qquad [\ad \ug |_\pg, F] = 0.
$$
Also, the operator $M$ leaves $\ngo$ invariant, and $M|_\ngo = M_\mu$.  Assume now that $\ngo$ is not abelian, $\mu:=\lb|_{\ngo\times\ngo}\in\sca_{\beta}$
and $\beta_{\mu}=\beta$, and define $E_{\beta}\in\End(\ggo)$ by
$$
E_{\beta}:=\left[\begin{smallmatrix} 0&&\\ &0&\\
&&\beta+||\beta||^2I\end{smallmatrix}\right], \qquad\mbox{i.e.}\quad
E_{\beta}|_{\kg\oplus\hg}=0, \quad E_{\beta}|_{\ngo}=\beta+||\beta||^2I.
$$
Then the following conditions hold:
\begin{itemize}
\item $E_{\beta}\in\Der(\ggo)$ (or equivalently, $[\beta,\ad{\ug}|_{\ngo}]=0$
and $\beta+||\beta||^2I\in\Der(\ngo)$).

\item $S(\ad{H}+D)=-\tfrac{c}{||\mu||^2}E_{\beta}$.

\item $c=-\unc ||\mu||^2||\beta||^2$ and the moment map satisfies $m(\mu)=\beta$.
\end{itemize}
\end{proposition}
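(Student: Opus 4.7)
The proof is a bookkeeping exercise that collects identities already established in Proposition \ref{leo}, Theorem \ref{main} and Lemma \ref{adY}. The plan is to fix the convenient representative $D=-\ad H+\tilde D_1$ from Theorem \ref{main}(v), where $\tilde D_1|_{\kg\oplus\hg}=0$ and $\tilde D_1|_\ngo=D_1$, and verify each assertion in turn.

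Since $D_1=\Ricci_\ngo-cI$ is symmetric and $(\ad H+D)|_\ngo=\tilde D_1|_\ngo=D_1$, the identity $D_1=S(\ad H|_\ngo+D|_\ngo)$ is immediate, and moreover $F=S(\ad_\pg H+D_\pg)=\tilde D_1|_\pg$, so that $F|_\hg=0$ and $F|_\ngo=D_1$. The Killing form of $\ngo$ vanishes, so $\Ricci_\ngo=\mm_\mu$; combining Remark \ref{Mcomm} with Theorem \ref{main}(iv) (and the fact that for $Z\in\kg$ the map $\ad Z|_\ngo$ is skew-symmetric, so its transpose is also a derivation of $\ngo$) yields $[\ad\ug|_\ngo,\mm_\mu]=0$, hence $[\ad\ug|_\ngo,D_1]=0$. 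Since $\ad\ug|_\pg$ preserves the decomposition $\pg=\hg\oplus\ngo$ (by the block forms of $\ad Y$ and $\ad Z$ listed just before Lemma \ref{Dpp}, together with $\lambda_1=0$) while $F=\left[\begin{smallmatrix}0&0\\0&D_1\end{smallmatrix}\right]$ in the same decomposition, $[\ad\ug|_\pg,F]=0$ reduces at once to the previous identity. That $\mm$ leaves $\ngo$ invariant with $\mm|_\ngo=\mm_\mu$ is read off from Lemma \ref{calculoR}: the off-diagonal entry $\la\mm Y,X\ra=-\unm\tr{\ad_\eta Y\,(\ad_\mu X)^t}$ vanishes by the filtration argument used in the second half of the proof of Theorem \ref{main}, because $(\ad_\eta Y)^t\in\Der(\ngo)$ from Theorem \ref{main}(iv) makes $\ad_\eta Y$ preserve each piece $\ngo_i$ of the orthogonal grading coming from the descending central series while $\ad_\mu X$ strictly raises that grading, and the $\ngo\times\ngo$ block reduces to $\mm_\mu$ because $\sum[\ad_\eta Y_i,(\ad_\eta Y_i)^t]=0$.

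Under $\mu\in\sca_\beta$ with $\beta_\mu=\beta$, Proposition \ref{leo}(i)--(ii) already deliver $\beta+||\beta||^2I\in\Der(\ngo)$ and $[\beta,\ad\hg|_\ngo]=0$. The remaining piece $[\beta,\ad\kg|_\ngo]=0$ follows from equation \eqref{p3} in the proof of Lemma \ref{adY}, which expresses $\mm_\mu$ as a nonzero scalar multiple of $\beta$, together with Remark \ref{Mcomm} applied to the skew-symmetric derivations $\ad Z|_\ngo$; in combination with $E_\beta|_{\kg\oplus\hg}=0$ this promotes $E_\beta$ to a derivation of $\ggo$. The proportionality $S(\ad H+D)=-\tfrac{c}{||\mu||^2}E_\beta$ is then obtained by combining the explicit formula for $F$ given in equation \eqref{p3} with the vanishing $S(\ad H+D)|_\kg=0=E_\beta|_\kg$ and the scalar identity $c=-\unc||\mu||^2||\beta||^2$ of equation \eqref{p5}. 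The same identity finally yields $m(\mu)=\tfrac{4}{||\mu||^2}\mm_\mu=\beta$.

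No step presents a real conceptual obstacle, since the whole proposition is a repackaging of intermediate conclusions drawn during the proof of Theorem \ref{main}. The only mildly delicate point is verifying that the various scalar factors match up, in particular that the proportionality constant in the second bullet is simultaneously consistent with the expression for $F$ coming from Proposition \ref{leo}(iii) and the value of $c$ obtained from the moment-map trace computation in Lemma \ref{adY}.
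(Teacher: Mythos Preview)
Your proposal is correct and follows exactly the paper's approach: Proposition \ref{extras} is presented there without a separate proof, simply prefaced by ``Along the proof of Theorem \ref{main}, the following extra structural properties have been obtained,'' and you have accurately traced each assertion back to the relevant step (Proposition \ref{leo}, Lemma \ref{calculoR}, equations \eqref{p3}--\eqref{p5}, \eqref{p8}, Remark \ref{Mcomm}, and the filtration argument in \eqref{ric2}). The only slip is a misattribution: the relation $\mm_\mu=-\tfrac{c}{||\beta||^2}\beta$ that you cite is not \eqref{p3} but the displayed identity obtained a few lines after Lemma \ref{adY}; this does not affect the argument.
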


In \cite{homRS}, algebraic solitons have been geometrically characterized among homogeneous Ricci solitons as those for which the Ricci flow solution is simultaneously diagonalizable with respect to a fixed orthonormal basis of some tangent space.  As a first application of Theorem \ref{main}, we now give some structural characterizations of algebraic solitons.

\begin{proposition}\label{equiv-algsol}
Under the same hypothesis of Theorem \ref{main}, assume that $(G/K,g)$ is an expanding semi-algebraic soliton with $\Ricci=cI+S(D_{\pg})$, $c<0$, $D=\left[\begin{smallmatrix} 0&0\\ 0&D_\pg\\\end{smallmatrix}\right]\in\Der(\ggo)$. Then the following conditions are equivalent:
\begin{itemize}
  \item[(i)] $(G/K,g)$ is an algebraic soliton (i.e. $S(D) \in \Der(\ggo)$).
  \item[(ii)] $S(D_\pg) \in \Der(\lb_\pg)$.
  \item[(iii)] $S(\ad_\pg H) \in \Der(\lb_\pg)$ (or equivalently, $S(\ad{H})\in\Der(\ggo)$).
  \item[(iv)] $\ad_\pg H$ is normal (or equivalently, $\ad{H}$ is normal).
  \item[(v)] $S(D|_\hg) = 0$.
  \item[(vi)] $S(\ad{H}|_\hg) = 0$.
  \item[(vii)] $\Ricci |_\hg = cI$.
\end{itemize}
\end{proposition}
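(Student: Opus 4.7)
My plan is to exploit the structural identity $D = -\ad H + \tilde D_1$ from Theorem \ref{main}(v), where $\tilde D_1 \in \Der(\ggo)$ is the symmetric derivation vanishing on $\kg \oplus \hg$ and equal to $D_1$ on $\ngo$. Together with the block-diagonal form of $D_\pg$ with respect to $\pg = \hg \oplus \ngo$ (which follows from $\lambda_1=0$ and $D_{\hg\ngo}=0$ in Proposition \ref{leo}), this lets me translate every one of the seven conditions into a statement about $\ad H|_\hg$. The first cluster $(v)\Leftrightarrow(vi)\Leftrightarrow(vii)$ is then immediate: since $\tilde D_1|_\hg = 0$, we have $D|_\hg = -\ad H|_\hg$, giving $(v)\Leftrightarrow(vi)$, and the block structure yields $\Ricci|_\hg = cI + S(D|_\hg) = cI - S(\ad H|_\hg)$, so (vii) also amounts to the skew-symmetry of $\ad H|_\hg$.

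For the derivation/normality cluster $(i)\Leftrightarrow(ii)\Leftrightarrow(iii)\Leftrightarrow(iv)$, the identity $S(D) = -S(\ad H) + \tilde D_1$ together with $\tilde D_1 \in \Der(\ggo)$ (its $\pg$-restriction lying in $\Der(\lb_\pg)$, as shown inside the proof of Theorem \ref{main}) turns (i) into $S(\ad H) \in \Der(\ggo)$ and (ii) into $S(\ad_\pg H) \in \Der(\lb_\pg)$. By Lemma \ref{Dpp} applied to $\ad H$, using $\ad H|_\kg = 0$ from \eqref{kH0}, the derivation $\ad H$ preserves $\pg$; this forces $\ad_\pg H \in \Der(\lb_\pg)$ automatically and makes the $\ggo$- and $\pg$-versions of (iii) coincide. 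The direction $(iv)\Rightarrow(iii)$ follows from Remark \ref{Dnorm}, while for $(iii)\Rightarrow(iv)$ I would use Proposition \ref{extras}: the identification $F = \tilde D_1|_\pg$ together with $[\ad\ug|_\pg,F]=0$ gives the key commutation $[\ad_\pg H, D_\pg] = 0$, and plugging this into $\Ricci = M - \tfrac{1}{2}B_\pg - S(\ad_\pg H)$ while invoking Remark \ref{Mcomm} (the moment map commutes with any derivation whose transpose is also a derivation) produces $[\ad_\pg H, (\ad_\pg H)^t] = 0$.

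The two clusters are then connected by observing that the block-diagonal structure of $\Ricci$ together with $[\ad_\eta H, D_1] = 0$ from Proposition \ref{extras} forces the $\ngo$-block of $[\ad_\pg H, \Ricci]$ to vanish, so that (iv) reduces to the normality of $\ad H|_\hg$ alone on the reductive complement $\hg$. Under (iii), Remark \ref{Bort} supplies the trace identity $\tr{B_1\, S(\ad H|_\hg)} = 0$, which combined with the derivation-type condition of $S(\ad H|_\hg)$ against $\lambda_0$ on the reductive Lie algebra $\ug = \kg \oplus \hg$ forces $S(\ad H|_\hg) = 0$, i.e.\ (vi). The main obstacle will be precisely this last bridging step: upgrading the normality of $\ad H|_\hg$ to full skew-symmetry, which requires a delicate combination of the Killing-form trace vanishing from Remark \ref{Bort} with the reductive structure of $\ug$, since normality alone does not force a traceless operator to be skew.
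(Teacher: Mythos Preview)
Your clustering into $(v)\Leftrightarrow(vi)\Leftrightarrow(vii)$ and the equivalences $(i)\Leftrightarrow(ii)\Leftrightarrow(iii)$ via $D=-\ad H+\tilde D_1$ are correct and match the paper. The direction $(iv)\Rightarrow(iii)$ via Remark~\ref{Dnorm} is also exactly what the paper does.

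There are, however, two genuine gaps.

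\textbf{The step $(iii)\Rightarrow(iv)$.} Your argument does not produce $[\ad_\pg H,(\ad_\pg H)^t]=0$. Tracing it through: from $[\ad_\pg H,M]=0$ (Remark~\ref{Mcomm}) and $[\ad_\pg H,F]=0$ (Proposition~\ref{extras}), together with $M-\tfrac12 B_\pg=cI+F$, you get only $[\ad_\pg H,B_\pg]=0$. The commutator with $(\ad_\pg H)^t$ simply cancels from both sides and yields no information. The paper instead proves $(iii)\Rightarrow(iv)$ by a direct one-line observation you are missing: since $\la(\ad_\pg H)^tH,X\ra=\la H,[H,X]_\pg\ra=\tr\ad[H,X]=0$, one has $(\ad_\pg H)^tH=0$, and then the derivation property of $(\ad_\pg H)^t$ applied to $[H,X]_\pg$ gives normality immediately.

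\textbf{The bridge to $(vi)$.} Your proposed route---using only the trace identity $\tr B_1\,S(\ad H|_\hg)=0$ from Remark~\ref{Bort} plus some ``derivation-type condition'' against $\lambda_0$---does not force $S(\ad H|_\hg)=0$; you correctly flag this as an obstacle, but it is not a difficulty to be overcome, it is a dead end. The paper's argument uses exactly the relation $[B_\pg,\ad_\pg H]=0$ that your computation above \emph{did} produce: restricting to the block $\hg_1$ where $B_\pg|_{\hg_1}=B_2$ is invertible and using the conjugation identity $(\ad H|_{\hg_1})^t=-B_2(\ad H|_{\hg_1})B_2^{-1}$ (a consequence of \eqref{DB}), commutation with $B_2$ gives $(\ad H|_{\hg_1})^t=-\ad H|_{\hg_1}$, hence $(vi)$. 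So the full commutator, not the trace, is the key, and it is already in your hands once you correct the target of your $(iii)\Rightarrow(iv)$ computation. Your claim that $(iv)$ reduces to normality of $\ad H|_\hg$ alone (i.e.\ that $\ad_\eta H$ is automatically normal) is also unjustified and not needed in the paper's route.
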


\begin{proof}
If one assumes part (i), then (ii) holds by $S(D)\pg \subset \pg$.  Conversely, part (ii) together with the fact that $[\ad Z|_\pg, S(D_\pg)] = 0$ for all $Z\in\kg$ (which follows from \eqref{Z}), imply that $S(D)=\left[\begin{smallmatrix} 0&0\\ 0&S(D_\pg)\\\end{smallmatrix}\right]\in\Der(\ggo)$.

The equivalence between part (iii) and $S(\ad{H})\in\Der(\ggo)$ follows as above by using that $S(\ad{H})\pg\subset\pg$ (see \eqref{adH}) and $[\ad{\kg}|_\pg,S(\ad_\pg{H})]=0$ (see \eqref{kH0}).  It follows from Proposition \ref{extras} that $S(\ad_\pg H + D_\pg) \in \Der(\lb_\pg)$, thus parts (ii) and (iii) are equivalent. To see that (iii) implies (iv), first observe that for all $X\in \pg$,
\[
\la \left(\ad_\pg H\right)^t H,X \ra = \la H, [H,X]_\pg \ra =  \tr \ad[H,X] = \tr [\ad H, \ad X] = 0,
\]
so $\left(\ad_\pg H\right)^t H = 0$.  By using this and the fact that $\left(\ad_\pg H\right)^t$ is a derivation (which is equivalent to part (iii), since $\ad_\pg H \in \Der(\lb_\pg)$), we get
\[
\left(\ad_\pg H\right)^t \left([H,X]_\pg \right) = [H,\left(\ad_\pg H\right)^t X ]_\pg.
\]
and thus $\ad_\pg H $ is normal ($\ad{H}$ is therefore normal by \eqref{adH}).  Conversely, if part (iv) holds, then we can argue as in \eqref{adYtder} to obtain
\[
0 = \tr M [\ad_\pg H, \left( \ad_\pg H\right)^t ] = \tfrac14 \left\|\pi\left( \left( \ad_\pg H\right)^t \right) \lb_\pg \right\|^2.
\]
This implies that $\left(\ad_\pg H\right)^t \in \Der(\lb_\pg)$, and thus part (iii) follows.

From Proposition \ref{extras} we have that $S(\ad H |_\hg+D|_\hg) = 0$, therefore using that $\Ricci |_\hg = cI + S(D|_\hg)$ it easily follows that parts (v), (vi) and (vii) are pairwise equivalent.

Now if (vi) holds, then we use the fact that $M|_\ngo = M_\mu$ (see Proposition \ref{extras}) to obtain
\begin{align*}
\tfrac14 \left\|\pi\left( \left( \ad_\pg H\right)^t \right) \lb_\pg \right\|^2 =& \tr M [\ad_\pg H, \left( \ad_\pg H\right)^t ] \\
=& \tr M_\mu [\ad{H}|_\ngo, \left( \ad{H}|_\ngo\right)^t ] \\
=& \tfrac14 \left\|\pi\left( \left( \ad{H}|_\ngo\right)^t \right) \mu\right\|^2 = 0,
\end{align*}
by Theorem \ref{main}, (iv), from which part (iii) follows.

Conversely, assume that $\left( \ad_\pg H \right)^t \in \Der(\lb_\pg)$. This implies that $[M, \ad_\pg H] = 0$ by Remark \ref{Mcomm}. On the other hand, recall that $M-\unm B_\pg = cI + F$, and by Proposition \ref{extras} $cI + F$ also commutes with $\ad_\pg H$. Hence, we must have that
\[
    [B_\pg, \ad_\pg H] = 0.
\]
Now if we decompose $\pg = \hg_1 \oplus \ag \oplus \ngo$ as in the proof of Lemma \ref{Dpp}, then relative to that decomposition the operators $B_\pg$ and $\ad_\pg H$ have the form:
\[
B_\pg = \left[\begin{smallmatrix}
B_2&0&0\\
0&0&0\\
0&0&0
\end{smallmatrix}\right], \qquad
\ad_\pg H = \left[\begin{smallmatrix}
\ad H |_{\hg_1} &0&0\\
0&0&0\\
0&0&\ad H |_\ngo
\end{smallmatrix}\right].
\]
Here, we are using \eqref{formBD} and the fact that $(\ad H)^t\in\Der(\ggo)$. Also, as in the proof of Lemma \ref{Dpp}, we obtain that
\[
    \left(\ad H |_{\hg_1}\right)^t = -B_2 \left(\ad H |_{\hg_1}\right) B_2^{-1}.
\]
But $[B_2,\ad H |_{\hg_1}] = [B_\pg, \ad_\pg H]|_{\hg_1} = 0$, hence $\left(\ad H |_{\hg_1}\right)^t = - \ad H |_{\hg_1}$ and part (vi) follows. This concludes the proof of the proposition.
\end{proof}

\section{Construction procedure for semi-algebraic solitons}\label{cons}

Our aim in this section is to state Theorem \ref{main} in a more transparent way via a
construction procedure.

First consider the following data set satisfying the following conditions:

\begin{itemize}
\item[(d1)] $(\ngo,\ip_{\ngo})$: a metric nilpotent Lie algebra with Ricci operator
$\Ricci_{\ngo}=cI+D_1$, for some $c<0$, $D_1\in\Der(\ngo)$ (i.e. a
nilsoliton $(\ngo,\ip_{\ngo})$; recall that $D_1$ is always positive definite).
\item[ ]

\item[(d2)] $(\ug=\kg\oplus\hg,\ip_{\ug})$: a metric reductive decomposition with $\ug$ a reductive Lie algebra.
\item[ ]

\item[(d3)] $\theta:\ug\longrightarrow\Der(\ngo)$: a homomorphism of Lie algebras such that
\begin{itemize}
\item[ ]
\item[(c1)] $\theta(Z)^t=-\theta(Z)$ for all $Z\in\kg$.
\item[ ]

\item[(c2)] $\sum [\theta(Y_i),\theta(Y_i)^t]=0$ for any orthonormal basis $\{ Y_i\}$ of $\hg$ (it follows as in \eqref{adYtder} that $\theta(Y)^t\in\Der(\ngo)$ for any $Y\in\hg$).
\item[ ]

\item[(c3)] The Ricci operator of $(\ug=\kg\oplus\hg,\ip_{\ug})$ satisfies
$$
\Ricci_{\ug}=cI+C_{\theta}, \qquad \mbox{where} \quad \la C_{\theta}Y,Y\ra=\tr{S(\theta(Y))^2}, \quad\forall Y\in\hg.
$$
\end{itemize}
\end{itemize}

Take now the corresponding semidirect product of Lie algebras,
$$
\ggo=\ug\oplus\ngo,
$$
and the metric reductive decomposition $\gkp$, where
$\pg:=\hg\oplus\ngo$ and the inner product $\ip$ is given by
$$
\ip|_{\ug\times\ug}=\ip_{\ug}, \qquad
\ip|_{\ngo\times\ngo}=\ip_{\ngo},
$$
and such that
$$
    \ggo= \rlap{$\overbrace{\phantom{\kg\oplus\hg}}^\ug$} \kg \oplus \underbrace{\hg\oplus\ngo}_\pg
$$
is an orthogonal decomposition.  Define $D:\ggo\longrightarrow\ggo$ by
$$
D:=-\left[\begin{smallmatrix} \ad_\ug{H}&\\ &\theta(H)\end{smallmatrix}\right] +\left[\begin{smallmatrix} 0&\\ &D_1\end{smallmatrix}\right],
$$
where $H\in\hg$ is defined by $\la H,Y\ra=\tr{\theta(Y)}$ for all $Y\in\ug$ (it easily follows that $[\kg,H]=0$).  We have that $D\in\Der(\ggo)$ since $D_1\in\Der(\ngo)$ (see (d1)) and $[\theta(\ug),D_1]=0$ (see Remark \ref{Mcomm}).

It follows from the converse part of Theorem \ref{main} that the Ricci operator of the reductive decomposition $\gkp$ is given by
\begin{equation}\label{cons-ric}
\Ricci=cI+S(D_{\pg}) = cI+\left[\begin{smallmatrix} -S(\ad_\ug{H}|_\hg)&\\ &-S(\theta(H))+D_1\end{smallmatrix}\right].
\end{equation}
We note that $\Ricci=cI$ if and only if $S(\ad_\ug{H})=0$ and $D_1=\unm (\theta(H)+\theta(H)^t)$.  On the other hand, it is an algebraic soliton if and only if $S(\ad_\ug{H}|_\hg)=0$, if and only if $\ad_\ug{H}$ and $\theta(H)$ are both normal operators (see Proposition \ref{equiv-algsol}).

We now rephrase the main part of Theorem \ref{main} in simpler terms.

\begin{theorem}\label{main2}
Any homogeneous expanding Ricci soliton is isometric to a homogeneous space $(G/K,g)$ with a metric reductive decomposition constructed as above.
\end{theorem}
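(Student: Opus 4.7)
The plan is to read Theorem \ref{main2} as a rephrasing of the forward direction of Theorem \ref{main}, and to assemble it by (a) pulling a homogeneous expanding Ricci soliton into the semi-algebraic framework via Jablonski's result (Theorem \ref{semi}) plus the simply connected reduction of Remark \ref{GscKc}, and then (b) reading off the construction data $(\ngo,\ip_\ngo)$, $(\ug=\kg\oplus\hg,\ip_\ug)$ and $\theta$ directly from the structural conclusions of Theorem \ref{main}.

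More precisely, I would start with a homogeneous expanding Ricci soliton $(M,g)$ and invoke Theorem \ref{semi} to obtain that $(M,g)=(G/K,g)$ is a semi-algebraic soliton with respect to $G=\Iso(M,g)$. Using Remark \ref{GscKc}, I replace $G/K$ by its simply connected cover so that $G/K$ is simply connected and still a semi-algebraic soliton. I then choose the canonical metric reductive decomposition $\gkp$ with $B(\kg,\pg)=0$, putting us under the hypotheses of Theorem \ref{main}.

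Next, I would apply Theorem \ref{main} to produce the orthogonal splitting $\pg=\hg\oplus\ngo$ with $\ngo$ the nilradical of $\ggo$, the reductive Lie subalgebra $\ug=\kg\oplus\hg$ with $\ggo=\ug\ltimes\ngo$ (part (i)), the nilsoliton $(\ngo,\ip|_{\ngo\times\ngo})$ with $\Ricci_\ngo=cI+D_1$ (part (iii)), the Ricci formula $\Ricci_\ug=cI+C_\hg$ on $(\ug=\kg\oplus\hg,\ip|_{\ug\times\ug})$ (part (ii)), and the vanishing $\sum[\ad Y_i|_\ngo,(\ad Y_i|_\ngo)^t]=0$ (part (iv)). Setting $\ip_\ngo:=\ip|_{\ngo\times\ngo}$, $\ip_\ug:=\ip|_{\ug\times\ug}$, and $\theta(Y):=\ad Y|_\ngo$ (well-defined into $\Der(\ngo)$ because $\ngo$ is an ideal), I would verify the data conditions one at a time: (d1) is Theorem \ref{main}(iii); (d2) holds by construction; for (d3), condition (c1) follows from $\Ad(K)$-invariance of $\ip$ (so $\ad Z$ is skew on $\pg$ for $Z\in\kg$), (c2) is exactly Theorem \ref{main}(iv), and (c3) is Theorem \ref{main}(ii) after noting $\la C_\hg Y,Y\ra=\tr S(\ad Y|_\ngo)^2=\tr S(\theta(Y))^2$.

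Finally, I would check that running the construction procedure of Section \ref{cons} on this data recovers the original $(G/K,g)$: the semidirect Lie algebra $\ug\oplus\ngo$ with bracket twisted by $\theta$ is tautologically $\ggo$, the prescribed inner product on $\pg=\hg\oplus\ngo$ agrees with the original $\ip$, the element $H$ built from $\la H,Y\ra=\tr\theta(Y)$ matches the $H$ of \eqref{defH} by \eqref{tradY}, and the derivation $D$ assembled by the construction equals the one appearing in Theorem \ref{main}(v). The Ricci formula \eqref{cons-ric} then reproduces $\Ricci=cI+S(D_\pg)$, and the converse part of Theorem \ref{main} (or equivalently Proposition \ref{semi-car}, using simple connectedness) confirms that the resulting space is a semi-algebraic soliton isometric to $(M,g)$. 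The only real obstacle is bookkeeping: matching the abstract construction's data with the data extracted from Theorem \ref{main} — in particular making sure that $\theta$ lands in $\Der(\ngo)$, that $[\kg,H]=0$ so the construction is well posed, and that the $H$ and $D$ produced abstractly coincide with those of the original decomposition. No genuinely new estimate is required beyond what Theorem \ref{main} already supplies.
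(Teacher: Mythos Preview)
Your proposal is correct and follows essentially the same route as the paper: invoke Theorem \ref{semi} to present the soliton as a semi-algebraic soliton, take the reductive decomposition with $B(\kg,\pg)=0$, and read off the data $(\ngo,\ip_\ngo)$, $(\ug=\kg\oplus\hg,\ip_\ug)$, $\theta=\ad|_\ngo$ from Theorem \ref{main} (i)--(iv). The paper's proof is three sentences and omits your bookkeeping; in particular your passage to the simply connected cover and your final appeal to the converse of Theorem \ref{main} are unnecessary, since the theorem only asks that the given soliton be isometric to a space whose metric reductive decomposition arises from the construction, and this is exactly what the forward direction of Theorem \ref{main} already certifies.
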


\begin{proof}
According to Theorem \ref{semi}, any homogeneous Ricci soliton is isometric to a semi-algebraic soliton $(G/K,g)$.  Now consider the metric reductive decomposition such that $B(\kg,\pg)=0$.  It follows from Theorem \ref{main}, parts (i)-(iv), that such decomposition is obtained by the construction procedure described above with $\theta(Y):=\ad{Y}|_{\ngo}$ for all $Y\in\ug$, concluding the proof.
\end{proof}

Let $(G/K,g)$ be any connected homogeneous space with metric reductive decomposition $\gkp$ constructed as above, with $\ggo=\ug\oplus\ngo$.  Assume that $G$ is connected and take $U,N\subset G$, the connected Lie subgroups with Lie algebras $\ug$ and $\ngo$, respectively.  Note that if $K$ is connected then $K\subset U$.

We have that $UN$ is a subgroup of $G$ as $N$ is normal, and since it contains a neighborhood of the identity (recall that the map $\ggo=\ug\oplus\ngo\longrightarrow G$, $(Y,X)\mapsto\exp(Y)\exp(X)$ is a diffeomorphism between some open neighborhoods of $(0,0)$ and $e\in G$, respectively), it follows that $G=UN$.  Now the function
$$
q:U\ltimes N\longrightarrow G, \qquad q(u,n):=un,
$$
is an epimorphism of Lie groups whose derivative is the isomorphism of Lie algebras given by $dq|_e(Y,X)=Y+X$, for all $Y\in\ug$, $X\in\ngo$.  Recall that on the external semi-direct product $U\ltimes N$ the multiplication is defined by $(u,n)\cdot(v,m)=(uv,v^{-1}nvm)$.  Thus $q$ is a covering map and its kernel is a discrete subgroup of the center of $U\ltimes N$, which is precisely given by the anti-diagonal
$$
\Ker(q)=\Delta(U\cap N):=\{ (x,x^{-1}):x\in U\cap N\}.
$$
Moreover, we obtain that $G$ is isomorphic to $(U\ltimes N)/\Delta(U\cap N)$.  In particular, if $G$ is simply connected, then $G\simeq U\ltimes N$, and so $G$ is diffeomorphic to the direct product $U\times N$, from which follows that $U$ and $N$ are both simply connected as well.

Let us state some of these properties for future use.

\begin{proposition}\label{GKtop}
Let $(G/K,g)$ be a homogeneous space with metric reductive decomposition $\gkp$ constructed as above and assume that $G$ is simply connected and $K$ connected.  Then $G\simeq U\ltimes N$, the groups $U$ and $N$ are simply connected and $G/K$ is diffeomorphic to the product of manifolds $U/K\times N$.
\end{proposition}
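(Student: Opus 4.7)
The plan is to appeal to the constructions carried out in the paragraph preceding the proposition for the first two assertions, and to upgrade the product decomposition of $G$ to one of $G/K$. That paragraph already produces the epimorphism $q:U\ltimes N\to G$ with discrete central kernel $\Delta(U\cap N)$, and observes that when $G$ is simply connected this kernel must be trivial, so $G\simeq U\ltimes N$ as Lie groups, and consequently $G$ is diffeomorphic as a manifold to $U\times N$. From $\pi_1(U)\times\pi_1(N)=\pi_1(G)=1$ it follows immediately that both $U$ and $N$ are simply connected.

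The remaining task is the diffeomorphism $G/K\simeq U/K\times N$. First, since $K$ is connected with Lie algebra $\kg\subset\ug$, we have $K\subset U$. Next, I would introduce the multiplication map $\Phi:N\times U\to G$, $(n,u)\mapsto nu$; it factors as the composition of the map $(n,u)\mapsto(u,u^{-1}nu)$, which is a diffeomorphism $N\times U\to U\ltimes N$ with smooth inverse $(u,n')\mapsto(un'u^{-1},u)$, and the Lie group isomorphism $q:U\ltimes N\to G$. Hence $\Phi$ is itself a diffeomorphism, providing a global $NU$-factorization of $G$.

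Finally, since $K\subset U$, the right $K$-action on $G$ transports via $\Phi$ to the action on $N\times U$ that is trivial on $N$ and right translation on $U$:
\[
\Phi(n,u)\cdot k=(nu)k=n(uk)=\Phi(n,uk), \qquad\forall k\in K.
\]
Passing to $K$-orbits therefore yields the desired diffeomorphism $\overline\Phi:N\times(U/K)\to G/K$, $(n,uK)\mapsto nuK$. Since essentially all of the ingredients have been prepared in advance, I do not anticipate a genuine obstacle; the one point requiring care is the use of the $NU$ (rather than $UN$) decomposition of $G$, which is what causes the right $K$-action to decouple cleanly from the $N$-factor and leaves $N$ pointwise fixed.
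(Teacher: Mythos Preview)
Your proposal is correct and follows essentially the same approach as the paper, which leaves the proposition unproved and simply refers back to the preceding paragraph. Your argument for the final diffeomorphism $G/K\simeq U/K\times N$ via the $NU$-factorization $\Phi(n,u)=nu$ is a clean way to make explicit what the paper leaves implicit; the switch from $UN$ to $NU$ is exactly the right move to decouple the right $K$-action from the $N$-factor.
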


It follows from Theorem \ref{main2} that a given simply connected Einstein $(G/K,g)$ obeys the Alekseevskii's conjecture if and only if $U/K$ is diffeomorphic to a Euclidean space, or equivalently, $K$ is a maximal compact subgroup of $U$ (see Section \ref{ERS}).

\section{Algebraic solitons and the Alekseevskii Conjecture}\label{algsol-alek}

As another application of the given structural results, in this section we prove a close link between Einstein homogeneous manifolds and algebraic solitons. We obtain as a consequence that if the Alekseevskii conjecture turns out to be true, then the following a priori much stronger result also holds:

\begin{quote}
Any expanding algebraic soliton $(G/K,g)$ is diffeomorphic to a Euclidean space (or equivalently, $K$ is a maximal compact subgroup of $G$).
\end{quote}

Assume that we have a metric reductive decomposition  $(\ggo = \kg \oplus \pg, \ip)$ such that $\ggo$ is non-unimodular.  Thus the vector $H\in\pg$ defined in \eqref{defH} is nonzero, and since $[\ggo,\ggo]\perp H$, it is clear that the subspace $\ggo_0 := \{ H\}^\perp$ is in fact an ideal of codimension one in $\ggo$. This implies that
$$
\ggo=\RR H\oplus\ggo_0,
$$
is a semidirect product of Lie algebras, with $\ggo_0$ a unimodular ideal.

In the following result we use the previous observation to prove that one can get an Einstein metric reductive decomposition out of any algebraic soliton, either by changing the adjoint action of $H$ on $\ggo_0$, or by adding a suitable one in the unimodular case.

\begin{proposition}\label{Et}
Let $\gkp$ be a metric reductive decomposition such that $B(\kg,\pg)=0$, and assume that it is an algebraic soliton with $\Ricci = cI + D_\pg$, $D=S(D)\in \Der(\ggo)$, as in \eqref{cons-ric}.

\begin{itemize}
\item[(i)]  If $\ggo$ is non-unimodular, consider the new Lie algebra $\wt{\ggo}$ with underlying vector space $\ggo=\RR H\oplus\ggo_0$ and Lie bracket defined by keeping the one of $\ggo$ on $\ggo_0$ and only replacing the adjoint action of $H$ on $\ggo_0$ by
$$
\ad_{\tilde{\ggo}}{H} := \alpha (S(\ad_\ggo{H}) + D) =\alpha \left[\begin{smallmatrix} 0&&\\ &0&\\ &&D_1\end{smallmatrix}\right], \qquad
\alpha=\frac{\| H\|}{(\tr{D_1})^{1/2}}.
$$
Then the metric reductive decomposition $(\wt{\ggo}=\kg\oplus\pg,\ip)$ is Einstein with $\wt{\Ricci} = cI$.

%\item[ ]

\item[(ii)] The metric reductive decomposition $(\ggo_0=\kg\oplus\pg_0,\ip|_{\ggo_0\times\ggo_0})$, where $\pg_0:=\pg\cap\{ H\}^{\perp}$, is also an algebraic soliton with
$$
\Ricci_{\ggo_0}=cI+D'_{\pg_0}, \qquad \mbox{where}\quad D':= D|_{\ggo_0}+S(\ad{H})|_{\ggo_0}\in\Der(\ggo_0).
$$
%\item[ ]

\item[(iii)] If $\ggo$ is unimodular, consider the semi-direct product $\wt{\ggo}=\RR A\oplus\ggo$, with
$$
\ad_{\tilde{\ggo}}{A} := \alpha D= \alpha \left[\begin{smallmatrix} 0&&\\ &0&\\ &&D_1\end{smallmatrix}\right], \qquad
\alpha=\frac{1}{(\tr{D_1})^{1/2}}.
$$
Then the metric reductive decomposition $(\wt{\ggo}=\kg\oplus\wt{\pg},\ip)$, where $\wt{\pg}:=\RR A\oplus\pg$ and $\| A\|=1$, $\la A,\pg\ra=0$, is Einstein with $\wt{\Ricci} = cI$.
\end{itemize}
\end{proposition}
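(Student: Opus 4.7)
The plan is to reduce each of (i)--(iii) to the converse direction of Theorem \ref{main}: verify its conditions (i)--(iv) for the modified data, then use Theorem \ref{main}(v) to compute the derivation $\wt D = -\wt\ad \wt H + \tilde{D_1}$ of $\wt\ggo$ (where $\tilde{D_1}$ denotes the extension of $D_1$ by zero to $\ngo$'s complement), and show that the prescribed $\alpha$ forces $\wt D|_{\wt\pg} = 0$, so $\wt\Ricci = cI$.

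For part (iii), $\wt\ggo = \RR A \ltimes \ggo$ is a Lie algebra because $D \in \Der(\ggo)$. A trace computation yields $\wt H = (\tr D_1)^{1/2} A$, whence $\wt\ad_{\wt\pg}\wt H = D|_{\wt\pg} = \tilde{D_1}|_{\wt\pg}$ and $\wt D|_{\wt\pg} = 0$. Conditions (i), (iii), (iv) of Theorem \ref{main} for $\wt\ggo$ are immediate, since the bracket of $\ggo$ and the nilsoliton on $\ngo$ are untouched and $D_1$ is symmetric. For (ii), $\wt\ug = \RR A \oplus \ug$ is a direct product of Lie algebras (with $\RR A$ central) and the metric on $\wt\hg$ is the orthogonal product, so $\wt\Ricci_{\wt\ug}$ splits as a direct sum; the match on $\RR A$ reduces to $0 = c + (\tr D_1^2)/(\tr D_1)$, which is the unimodular instance of \eqref{c}, while on $\hg$ it is the original Theorem \ref{main}(ii) for $\ggo$.

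For part (i), Jacobi for $\wt\ggo$ reduces to $\tilde{D_1} \in \Der(\ggo)$ (Proposition \ref{extras} together with Remark \ref{Mcomm}), and $\ngo$ remains the nilradical. A trace computation using that $\ad_{\lambda_0} H$ is skew-symmetric (Proposition \ref{equiv-algsol}(vi)) gives $\wt H = H/\alpha$, hence $\wt\ad_\pg\wt H = \tilde{D_1}|_\pg$ and $\wt D|_\pg = 0$. The key input for condition (ii) is that $\phi(X) := \tr \ad X$ is a Lie algebra homomorphism $\ggo \to \RR$, so $[\ggo,\ggo] \subseteq \ggo_0$ and in particular $\lambda_0(\hg_0,\hg_0) \subseteq \hg_0$; this makes $\vg := \kg \oplus \hg_0$ a reductive ideal of $\ug$ (ideals of reductive algebras are reductive), and combined with $\wt\ad H|_{\kg\oplus\hg_0} = 0$ one obtains the Lie algebra splitting $\wt\ug = \RR H \oplus \vg$ with orthogonal metric product on $\wt\hg$. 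Then $\wt\Ricci_{\wt\ug}$ splits; on $\RR H$ the match reduces via \eqref{c} and the definition of $\alpha$ to $c\|H\|^2 + \alpha^2\tr D_1^2 = 0$, and on $\hg_0$ to $\Ricci_V = \Ricci_\ug|_{\hg_0}$, which follows from a direct comparison via Lemma \ref{calculoR} in which the $H/\|H\|$-contributions to $M_{\lambda_0}|_{\hg_0}$ cancel (using $\ad_{\lambda_0} H$ skew). Conditions (iii), (iv) are easy: (iii) because $\ngo$ is unchanged, (iv) because both $\ad_\eta H$ and $D_1$ are symmetric in the algebraic case.

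For part (ii), $\ggo_0$ is a codimension-one ideal containing $\kg$ by the same $\phi$-argument, $(\ggo_0 = \kg \oplus \pg_0, \ip|_{\ggo_0})$ is a metric reductive decomposition with $B(\kg,\pg_0) = 0$, and a trace computation shows $\ggo_0$ is unimodular. Conditions (i)--(iv) for $\ggo_0$ are verified using the same ingredients as in part (i), and Theorem \ref{main}(v) then gives $\Ricci_{\ggo_0} = cI + D'_{\pg_0}$ with $D' = \tilde{D_1}|_{\ggo_0}$; a direct calculation shows $\tilde{D_1}|_{\ggo_0} = D|_{\ggo_0} + S(\ad H)|_{\ggo_0}$ as claimed, and its derivation property on $\ggo_0$ follows from $D, S(\ad H) \in \Der(\ggo)$ (the latter by Proposition \ref{equiv-algsol}(iii)) together with the invariance of $\ggo_0$ under any derivation of $\ggo$ (since $\tr \ad DX = \tr[D,\ad X]=0$). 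The main obstacle is verifying condition (ii) of Theorem \ref{main} in part (i): the non-obvious splitting $\wt\ug = \RR H \oplus (\kg \oplus \hg_0)$ and the identity $\Ricci_V = \Ricci_\ug|_{\hg_0}$, both of which depend critically on the algebraic soliton symmetries.
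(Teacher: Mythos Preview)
Your approach is correct but takes a genuinely different route from the paper. The paper proves the proposition by a \emph{direct} Ricci computation: it applies Lemma~\ref{calculoR} to the coarse decomposition $\pg=\RR H\oplus\pg_0$ (respectively $\wt\pg=\RR A\oplus\pg$), compares the operators $\mm$, $\wt\mm$, $B_\pg$, $\wt B_\pg$ block by block, and reads off $\wt\Ricci=cI$ from explicit trace identities together with~\eqref{c}. You instead feed the modified data back into the \emph{converse} of Theorem~\ref{main}, verifying its hypotheses (i)--(iv) for $\wt\ggo$ (or $\ggo_0$) and then computing $\wt D=-\wt\ad\,\wt H+\tilde{D_1}$ via part~(v) to see it vanishes on $\wt\pg$. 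Your route is more structural and reuses the main theorem as a black box, at the cost of having to check that $\ngo$ remains the nilradical of $\wt\ggo$ and, crucially, that condition~(ii) of Theorem~\ref{main} holds for $\wt\ug$; the latter led you to the nontrivial identity $\Ricci_\vg=\Ricci_\ug|_{\hg_0}$ (with $\vg=\kg\oplus\hg_0$), whose proof via the cancellation of the $H/\|H\|$-terms in $M_{\lambda_0}$ is exactly where the algebraic-soliton hypothesis $S(\ad_{\lambda_0}H)=0$ enters. The paper's direct computation sidesteps all of this by never looking at $\wt\ug$ separately. One small omission in your sketch: when you say ``$\wt\Ricci_{\wt\ug}$ splits'' you should also check that $\wt C_{\wt\hg}$ is block-diagonal with respect to $\RR H\oplus\hg_0$ (respectively $\RR A\oplus\hg$); this amounts to $\tr\big(D_1\, S(\ad Y|_\ngo)\big)=0$ for $Y\in\hg_0$ (respectively $Y\in\hg$), which follows from $D_1=M_\mu-cI$, Remark~\ref{Rort}, and $\tr\ad_\eta Y=\la H,Y\ra$.
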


\begin{remark}
Parts (i) and (ii) may be viewed as a generalization of the link between Einstein solvmanifolds and nilsolitons discovered in \cite{soliton}.
\end{remark}

\begin{proof}
We first prove parts (i) and (ii).  Let $\wt{\Ricci}$,  $\wt{M}$, $\wt{B}_\pg$ and $\widetilde{H}\in\RR H$ denote the corresponding tensors for the metric reductive decomposition $\tgkp$, as defined in \eqref{defH}-\eqref{R}.  Also, set $\pg_0 := \{Y\in \pg : \la Y,H\ra = 0\}$, so that
$$
\pg = \RR H \oplus \pg_0
$$
is an orthogonal decomposition.  By applying Lemma \ref{calculoR} with the above decomposition (see Remark \ref{remcalculoR} and use that $H\perp [\ggo,\ggo]$) to the metric reductive decompositions $\gkp$ and $\tgkp$, we obtain that
$$
\begin{array}{lll}
\la \mm X,X\ra=\la \mm_0  X,X\ra, &\quad& \la \wt{\mm} X,X\ra=\la \mm_0  X,X\ra,  \\ \\
\la \mm X,H\ra= -\unm\tr{\ad_{\ggo} X (\ad_{{\ggo}} H)^t}, &\quad& \la \wt{\mm} X,H\ra= -\unm\tr{\ad_{\wt{\ggo}} X \ad_{\wt{\ggo}} H}, \\ \\
\la \mm H,H\ra=-\unm\tr{\ad_{{\ggo}} H (\ad_{{\ggo}} H)^t}, &\quad& \la \wt{\mm} H,H\ra=-\unm\tr{\ad_{\wt{\ggo}} H \ad_{\wt{\ggo}} H},
\end{array}
$$
for all $X\in \pg_0$, where $\mm_0 $ is the moment map operator corresponding to the metric reductive decomposition $(\ggo_0 = \kg\oplus \pg_0, \ip|_{\ggo_0\times\ggo_0})$ (and recall from Proposition \ref{equiv-algsol} that $\ad_\ggo H$ and $\ad_{\wt{\ggo}} H$ are both normal operators). This implies that  the Ricci operator of $(\ggo_0=\kg\oplus\pg_0,\ip|_{\ggo_0\times\ggo_0})$ is given by $\Ricci_{\ggo_0}=cI+D|_{\pg_0}+S(\ad_\pg{H}|_{\pg_0})$, and so part (ii) follows.

It is also clear that for $X, X' \in \pg_0$,
\[
\la (\wt{\mm}-\unm \wt{B}_\pg)X,X'\ra = \la (\mm-\unm B_\pg)X, X' \ra = \la (cI + D_{\pg} + S(\ad_\pg H)) X, X' \ra.
\]
We know that $\wt{H} = \beta H$, for some $\beta >0$ which is determined by
\begin{equation}\label{defHtilde}
\alpha\beta(\| H\|^2+\tr{D})= \tr{\ad_{\wt{\ggo}} \wt{H}} = \| \wt{H}\|^2 = \beta^2 \| H \|^2,
\end{equation}
so the formula for $\wt{\Ricci}$ on $\pg_0$ is given by
\begin{align*}
\la \wt{\Ricci} X,X'\ra =& \la \big( cI + D_\pg + S(\ad_\pg{H}) - \wt{\ad}_\pg{\wt H} \big) X, X'\ra \\
=& \la \big( cI + D_\pg + S(\ad_\pg{H}) - \beta \alpha (S(\ad_\pg{H})+D_p)\big) X, X'\ra \\
=& \la \big(cI + \left(1-\alpha \beta\right)\left(D_\pg + S(\ad_\pg H)\right) \big) X, X' \ra.
\end{align*}
By taking $\alpha = \beta^{-1}$, which together with \eqref{defHtilde} yield the formula for $\alpha$ given in the theorem, we obtain that $\wt{\Ricci} = cI$ on $\pg_0$.

On the other hand, it follows from Remarks \ref{Rort} and \ref{Bort} and $\tr{\ad_{\pg}{X}}=\tr{\ad{X}}=0$ that
\begin{align*}
\la \wt{\Ricci} X, H\ra =& \la \wt{\mm} - \unm \wt{B}_\pg) X, H\ra = -\tr{\ad_{\wt{\ggo}}{X} S(\ad_{\wt{\ggo}}{H})}  \\
=& -\alpha\tr{\ad_{\pg}{X} (S(\ad_{\pg}{H})+D_\pg)} \\
=& -\alpha\tr{\ad_{\pg}{X} (-cI+M-\unm B)} =0.
\end{align*}

Finally, by using \eqref{c} we obtain that
\begin{align*}
\la \wt{\Ricci} H, H\ra =& \la (\wt{M} - \unm \wt{B}_\pg) H, H\ra = -\tr{(\wt{\ad}_{\pg}{H})^2}  \\
 =& -\alpha^2\tr{(S(\ad_\pg{H})+D_\pg)^2} =c\alpha^2\tr{S(\ad_\pg{H})+D_\pg}=    c \|H \|^2,
\end{align*}
concluding the proof of part (i).

We now prove part (iii) in a very similar way.  It is easy to see that $\wt{H}=(\tr{D_1})^{1/2}A$, and since
$$
\tr{\ad{A}\ad{X}}=\alpha\tr{D_\pg\ad_\pg{X}}= \alpha\tr{(M-\unm B-cI)\ad_{\pg}{X}}=0, \qquad\forall X\in\pg,
$$
by unimodularity and Remarks \ref{Rort} and \ref{Bort}, we have that
$$
\wt{B}_{\wt{\pg}}=\left[\begin{smallmatrix} (\tr{D_1})^{-1}\tr{D_1^2}&0\\ 0&B_\pg\end{smallmatrix}\right].
$$
By applying Lemma \ref{calculoR} to the decomposition $\wt{\pg}=\RR A\oplus\pg$ (see Remark \ref{remcalculoR}), we obtain that
$$
\la \wt{\mm} X,X\ra=\la \mm  X,X\ra, \quad
\la \wt{\mm} X,A\ra= 0, \quad
\la \wt{\mm} A,A\ra=-\unm(\tr{D_1})^{-1}\tr{D_1^2},
$$
for all $X\in \pg$.  Finally, we use $\Ricci_\ngo=cI+D_1$ to get $c=-(\tr{D_1})^{-1}\tr{D_1^2}$ (see Remark \ref{Rort}), from which it easily follows that $\wt{\Ricci}=\wt{M}-\unm\wt{B}_{\wt{\pg}}-S(\ad_{\wt{\pg}}{\wt{H}})= cI$, concluding the proof of the proposition.
\end{proof}

\begin{theorem}\label{equivconj}
Assume there exists an expanding algebraic soliton which is not diffeomorphic to a Euclidean space.  Then there is a counterexample to the Alekseevskii conjecture.
\end{theorem}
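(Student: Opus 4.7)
The plan is to invoke Proposition \ref{Et} to construct an Einstein metric closely related to the given algebraic soliton, and then verify that the underlying manifold is either unchanged or only trivially enlarged by an $\RR$-factor, so that the new Einstein space also fails to be diffeomorphic to a Euclidean space.

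Start with $(G/K,g)$ an expanding algebraic soliton not diffeomorphic to Euclidean space. By Remark \ref{GscKc} together with Proposition \ref{equivalek}, we may reduce to the case $G$ simply connected, $K$ connected, with $B(\kg,\pg)=0$. Theorem \ref{main} then gives the structure $\ggo=\ug\oplus\ngo$ with $\ug=\kg\oplus\hg$ reductive, and Proposition \ref{GKtop} yields $G/K\cong U/K\times N$ as manifolds. We split into two cases. If $\ggo$ is non-unimodular, apply Proposition \ref{Et}(i) to produce a new Lie algebra $\wt\ggo$ on the same underlying vector space, making $(\wt\ggo=\kg\oplus\pg,\ip)$ an Einstein metric reductive decomposition. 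If $\ggo$ is unimodular, apply Proposition \ref{Et}(iii) to produce $\wt\ggo=\RR A\oplus\ggo$ of dimension one higher, again Einstein. In either case, let $\wt G$ be the simply connected Lie group with Lie algebra $\wt\ggo$ and $\wt K$ the connected subgroup with Lie algebra $\kg$; since the operators $\ad_{\wt\ggo}Z$ and $\ad_\ggo Z$ coincide for all $Z\in\kg$, $\kg$ remains compactly embedded in $\wt\ggo$ and $\wt K$ is closed in $\wt G$, so $(\wt M,\tilde g):=(\wt G/\wt K,\tilde g)$ is a genuine homogeneous Einstein manifold with $\ricci(\tilde g)=c\tilde g$.

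The crux is to identify the topology of $\wt M$. In the non-unimodular case, both $\ggo$ and $\wt\ggo$ admit the semidirect decomposition $\RR H\ltimes\ggo_0$, where $\ggo_0:=\{H\}^\perp$ is the unimodular ideal of codimension one, and critically the Lie bracket of $\ggo_0$ is unchanged since Proposition \ref{Et}(i) only alters $\ad H$. Writing $G_0\subset G$ and $\wt G_0\subset\wt G$ for the simply connected closed normal subgroups with Lie algebra $\ggo_0$, we have $G_0\cong\wt G_0$ as Lie groups, and both $G$ and $\wt G$ are diffeomorphic to $\RR\times G_0$. Since $\kg\subset\ggo_0$ forces $\wt K$ and $K$ to be identified with the same subgroup of $G_0$, one concludes
$$
\wt G/\wt K \;\cong\; \RR\times(G_0/K) \;\cong\; G/K,
$$
so $\wt M$ is diffeomorphic to $M$ and in particular not to Euclidean space. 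In the unimodular case, $\wt G\cong\RR\ltimes G$ as Lie groups so $\wt M\cong\RR\times M$ as manifolds; the element $A$ is not compactly embedded in $\wt\ggo$ because $\ad A=\alpha D$ has real non-negative eigenvalues, hence the maximal compactly embedded subalgebra of $\wt\ggo$ coincides with that of $\ggo$, and $\wt K$ fails to be maximal compact in $\wt G$ precisely because $K$ fails to be maximal compact in $G$. Proposition \ref{equivalek} then gives that $\wt M$ is not diffeomorphic to Euclidean space.

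Either way, $\wt M$ is a homogeneous Einstein manifold of negative scalar curvature not diffeomorphic to Euclidean space, contradicting Alekseevskii's conjecture. The main technical obstacle is the topological analysis in the non-unimodular case: one must carefully observe that the Lie bracket modification in Proposition \ref{Et}(i) preserves $\ggo_0$ as a Lie subalgebra, so that the $\RR$-factor generated by $H$ splits off as a trivial smooth factor of both $G/K$ and $\wt G/\wt K$, thereby identifying the two manifolds despite the different Lie group structures.
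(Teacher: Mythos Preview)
Your argument is correct and follows the same strategy as the paper's: invoke Proposition~\ref{Et} to produce an Einstein metric reductive decomposition, then show that the underlying manifold is unchanged (non-unimodular case) or acquires only an $\RR$-factor (unimodular case). In the non-unimodular case you work directly with the codimension-one unimodular ideal $\ggo_0=\{H\}^\perp$, which is slightly more economical than the paper's route via Proposition~\ref{GKtop} and the further splitting $\ug_0\oplus\ngo$, but the content is identical. One caveat: the inference ``$\kg$ compactly embedded in $\wt\ggo$, hence $\wt K$ closed in $\wt G$'' is not valid in general (an irrational line in the maximal torus of a simply connected compact group has compactly embedded Lie algebra yet is not closed); closedness is, however, a consequence of your subsequent identification of $\wt K$ with $K$ inside $G_0\cong\wt G_0\subset\wt G$, which is exactly how the paper handles it as well (``will follow from the analysis below'').
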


\begin{remark}
This result was proved in \cite{HePtrWyl} (see Remark 1.14 in that paper) in the case of left-invariant metrics on Lie groups by using entirely different methods.  They study warped product structures of Einstein manifolds.  The preprint \cite{HePtrWyl} has been replaced in arXiv by \cite{HePtrWyl2}, where a proof for the homogeneous case is included.
\end{remark}

\begin{proof}
We know that such a soliton is isometric to an algebraic soliton $(G/K,g)$ with a metric reductive decomposition $\gkp$, $\ggo=\ug\oplus\ngo$, constructed as in Section \ref{cons} and with Ricci operator given as in \eqref{cons-ric}, with $D=S(D)$.  We can also assume that $G$ is simply connected and $K$ connected (see Remark \ref{GscKc}).  Consider the Einstein metric reductive decomposition $(\wt{\ggo}=\kg\oplus\wt{\pg},\ip)$ provided by Proposition \ref{Et} out of $(G/K,g)$, and take the corresponding homogeneous space $(\wt{G}/\wt{K},\wt{g})$, where $\wt{G}$ is the simply connected Lie group with Lie algebra $\wt{\ggo}$ and $\wt{K}$ the connected Lie subgroup of $\wt{G}$ with Lie algebra $\kg$.  The fact that $\wt{K}$ is closed in $\wt{G}$ will follow from the analysis below, and note that $\wt{G}/\wt{K}$ is almost-effective since the isotropy $\wt{\kg}$-representation is faithful (by the almost-effectiveness of $G/K$).

We first assume that $\ggo$ is non-unimodular.  It follows from Proposition \ref{GKtop} that $G/K$ and $\wt{G}/\wt{K}$ are respectively diffeomorphic to
$$
U/K \times N, \qquad \wt{U}/\wt{K}\times \wt{N},
$$
where $U$ is the connected Lie subgroup of $G$ with Lie algebra $\ug$ (and analogously for $\wt{U}$).  Now if $U_0$ is the connected Lie subgroup of $G$ with Lie algebra $\ug_0:=\kg\oplus(\hg\cap\ggo_0)$ (and analogously for $\wt{U}_0$), then they are respectively diffeomorphic to
$$
\RR\times U_0/K \times N, \qquad \RR\times \wt{U}_0/\wt{K}\times \wt{N}.
$$
But $U_0\simeq\wt{U}_0$ and $N\simeq\wt{N}$ as they are simply connected and have identical Lie algebras, and since $K$ and $\wt{K}$ are connected, we obtain that $U_0/K$ is diffeomorphic to $\wt{U}_0/\wt{K}$.  Thus $G/K$ and $\wt{G}/\wt{K}$ are diffeomorphic, as $N$ and $\wt{N}$ are nilpotent and hence diffeomorphic to a Euclidean space.  This implies that $\wt{G}/\wt{K}$ is not diffeomorphic to a Euclidean space either, giving a counterexample to the Alekseevskii conjecture since $\wt{\Ricci}=cI$, $c<0$, as was to be shown.

The case when $\ggo$ is unimodular can be proved in much the same way as above.  One obtains here that $\wt{G}/\wt{K}$ is diffeomorphic to $\RR\times G/K$.  This concludes the proof of the theorem.
\end{proof}

\section{Appendix: Moment map stratification for the variety of Lie algebras}\label{git}

We refer to the survey \cite[Sections 3 and 7]{cruzchica} for a more detailed exposition on this subject.

Let us consider the space of all skew-symmetric algebras of dimension $n$, which is
parameterized by the vector space
\begin{align*}
V= & \lam \\ = & \{\mu:\RR^n\times\RR^n\longrightarrow\RR^n : \mu\; \mbox{bilinear and
skew-symmetric}\}.
\end{align*}
Then
$$
\nca=\{\mu\in V:\mu\;\mbox{satisfies Jacobi and is nilpotent}\}
$$
is an algebraic subset of $V$ as the Jacobi identity and the nilpotency condition
can both be written as zeroes of polynomial functions.  $\nca$ is often called the
{\it variety of nilpotent Lie algebras} (of dimension $n$).

There is a natural
linear action of $\G$ on $V$ given by
\begin{equation}\label{action}
h.\mu(X,Y)=h\mu(h^{-1}X,h^{-1}Y), \qquad X,Y\in\RR^n, \quad h\in\G,\quad \mu\in V.
\end{equation}
Recall that $\nca$ is $\G$-invariant and the Lie algebra isomorphism classes are
precisely the $\G$-orbits.  The representation of $\g$ on $V$ obtained by differentiation of
(\ref{action}) is given by
\begin{equation}\label{actiong}
\pi(\alpha)\mu=\alpha\mu(\cdot,\cdot)-\mu(\alpha\cdot,\cdot)-\mu(\cdot,\alpha\cdot),
\qquad \alpha\in\g,\quad\mu\in V.
\end{equation}
We note that $\pi(\alpha)\mu=0$ if and only if $\alpha\in\Der(\mu)$, the Lie algebra
of derivations of the algebra $\mu$.  The canonical inner product $\ip$ on $\RR^n$
determines an $\Or(n)$-invariant inner product on $V$, also denoted by $\ip$, as
follows:
\begin{equation}\label{innV}
\la\mu,\lambda\ra= \sum\la\mu(e_i,e_j),\lambda(e_i,e_j)\ra,
\end{equation}
and also the standard $\Ad(\Or(n))$-invariant inner product on $\g$ given by
\begin{equation}\label{inng}
\la \alpha,\beta\ra=\tr{\alpha \beta^{\mathrm t}}=\sum\la\alpha e_i,\beta e_i\ra,
 \qquad \alpha,\beta\in\g,
\end{equation}
where $\{ e_1,...,e_n\}$ denotes the canonical basis of $\RR^n$.  We note that
$\pi(\alpha)^t=\pi(\alpha^t)$ and $(\ad{\alpha})^t=\ad{\alpha^t}$ for any
$\alpha\in\g$, due to the choice of these canonical inner products everywhere.

We can use $\g=\sog(n)\oplus\sym(n)$ as a Cartan decomposition, where $\sog(n)$ and $\sym(n)$ denote the subspaces of skew-symmetric
and symmetric matrices, respectively.  It is proved in \cite[Proposition
3.5]{minimal} that the moment map $m:V\setminus\{ 0\}\longrightarrow\sym(n)$ for the action
(\ref{action}), which is defined by $\la m(\mu),\alpha\ra=\tfrac{1}{||\mu||^2}\la\pi(\alpha)\mu,\mu\ra$, for all $\alpha\in\g$, $\mu\in V$, is given by
\begin{equation}\label{mmv}
\la m(\mu)X,X\ra=\tfrac{1}{||\mu||^2}\left(-2\sum\la\mu(X,e_i),e_j\ra^2
+\sum\la\mu(e_i,e_j),X\ra^2\right),
\end{equation}
for all $X\in\RR^n$.

Let $\tg$ denote the set of all diagonal $n\times n$ matrices.  If $\{
e_1',...,e_n'\}$ is the basis of $(\RR^n)^*$ dual to the canonical basis $\{
e_1,...,e_n\}$, then
$$
\{ v_{ijk}=(e_i'\wedge e_j')\otimes e_k : 1\leq i<j\leq n, \; 1\leq k\leq n\}
$$
is a basis of weight vectors of $V$ for the action (\ref{action}), where $v_{ijk}$
is actually the bilinear form on $\RR^n$ defined by
$v_{ijk}(e_i,e_j)=-v_{ijk}(e_j,e_i)=e_k$ and zero otherwise.  The corresponding
weights $\alpha_{ij}^k\in\tg$, $i<j$, are given by
\begin{equation}\label{alfas}
\pi(\alpha)v_{ijk}=(a_k-a_i-a_j)v_{ijk}=\la\alpha,\alpha_{ij}^k\ra v_{ijk},
\quad\forall\alpha=\left[\begin{smallmatrix} a_1&&\\ &\ddots&\\ &&a_n
\end{smallmatrix}\right]\in\tg,
\end{equation}
where $\alpha_{ij}^k=E_{kk}-E_{ii}-E_{jj}$ and $\ip$ is the inner product defined in
(\ref{inng}).  As usual $E_{rs}$ denotes the matrix whose only nonzero coefficient
is $1$ at entry $rs$.  Let us denote by $\mu_{ij}^k$ the structure constants of a
vector $\mu\in V$ with respect to the basis $\{ v_{ijk}\}$:
$$
\mu=\sum\mu_{ij}^kv_{ijk}, \qquad \mu_{ij}^k\in\RR, \qquad {\rm i.e.}\quad
\mu(e_i,e_j)=\sum_{k=1}^n\mu_{ij}^ke_k, \quad i<j.
$$
Each nonzero $\mu\in V$ uniquely determines an element $\beta_{\mu}\in\tg$ given by
$$
\beta_{\mu}:=\mcc\left\{\alpha_{ij}^k:\mu_{ij}^k\ne 0\right\},
$$
where $\mcc(X)$ denotes the unique element of minimal norm in the convex hull
$\CH(X)$ of a subset $X\subset\tg$.  We note that $\beta_{\mu}$ is always nonzero
since $\tr{\alpha_{ij}^k}=-1$ for all $i<j$ and consequently $\tr{\beta_{\mu}}=-1$.

Let $\tg^+$ denote the Weyl chamber of $\g$ given by
\begin{equation}\label{weyl}
\tg^+=\left\{\left[\begin{smallmatrix} a_1&&\\ &\ddots&\\ &&a_n
\end{smallmatrix}\right]\in\tg:a_1\leq...\leq a_n\right\}.
\end{equation}

In \cite{standard}, a $\G$-invariant stratification for $V=\lam$ has been defined by
adapting to this context the construction given in \cite[Section 12]{Krw1} for
reductive group representations over an algebraically closed field.  We summarize in the following
theorem the main properties of the stratification, which has provided one of the main tools to study the structure of homogeneous Ricci solitons in this paper.

\begin{theorem}\label{strata}\cite{standard, einsteinsolv}
There exists a finite subset $\bca\subset\tg^+$, and for each $\beta\in\bca$ a
$\G$-invariant subset $\sca_{\beta}\subset V$ (a {\it stratum}) such that
$$
V\smallsetminus\{ 0\}=\bigcup_{\beta\in\bca}\sca_{\beta} \qquad \mbox{(disjoint
union)},
$$
and $\tr{\beta}=-1$ for any $\beta\in\bca$.  For $\mu\in\sca_{\beta}$ we have that
\begin{equation}\label{adbeta}
\left\la[\beta,D],D\right\ra\geq 0 \qquad\forall\; D\in\Der(\mu)
\qquad(\mbox{equality holds}\;\Leftrightarrow [\beta,D]=0),
\end{equation}
\begin{equation}\label{betapos}
\beta+||\beta||^2I \quad\mbox{is positive definite for all}\; \beta\in\bca
\;\mbox{such that}\; \sca_{\beta}\cap\nca\ne\emptyset,\;\mbox{and}
\end{equation}
\begin{equation}\label{bmu}
||\beta||\leq ||m(\mu)||\qquad(\mbox{equality holds}\;\Leftrightarrow
m(\mu)\;\mbox{is conjugate to}\; \beta).
\end{equation}
If in addition, $\mu\in\sca_{\beta}$ satisfies $\beta_{\mu}=\beta$, or equivalently,
$$
\min\left\{\la\beta,\alpha_{ij}^k\ra:\mu_{ij}^k\ne 0\right\}=||\beta||^2,
$$
which always holds for some $g.\mu$, $g\in\Or(n)$, then
\begin{equation}\label{betaort}
\tr{\beta D}=0 \quad\forall\; D\in\Der(\mu), \;\mbox{and}
\end{equation}
\begin{equation}\label{delta}
\left\la\pi\left(\beta+||\beta||^2I\right)\mu,\mu\right\ra\geq 0,
\end{equation}
where equality holds if and only if $\beta+||\beta||^2I\in\Der(\mu)$.
\end{theorem}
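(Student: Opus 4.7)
The plan is to establish this as an instance of the Kirwan moment map stratification for a reductive group representation, specialized to the $\G$-action on $V = \lam$. The general framework, together with its specific form for $V = \lam$, has been developed in \cite{standard, einsteinsolv}; my approach is to set up the combinatorial indexing data, verify the partition/finiteness property via the standard Morse analysis of the functional $\mu\mapsto\|m(\mu)\|^2$, and then derive each of the listed inequalities from direct weight-theoretic computations in the basis $\{v_{ijk}\}$.

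First I would define the indexing set. For each nonzero $\mu=\sum\mu_{ij}^k v_{ijk}$, set $\beta_\mu := \mcc\{\alpha_{ij}^k:\mu_{ij}^k\ne 0\}$, the minimum-norm point of the convex hull of the support weights. Since the possible supports form a finite collection of subsets of $\{\alpha_{ij}^k\}$, the values $\beta_\mu$ form a finite set; let $\bca$ consist of the Weyl-conjugates lying in $\tg^+$. The identity $\tr{\beta}=-1$ is immediate from $\tr{\alpha_{ij}^k}=-1$ and the convexity of the trace functional. Define the strata by
$$
\sca_\beta := \G \cdot \{\mu\in V\setminus\{0\}:\beta_\mu=\beta\}.
$$
To prove $V\setminus\{0\}=\bigsqcup_\beta\sca_\beta$, and that every $\mu\in\sca_\beta$ admits some $h\in\Or(n)$ with $\beta_{h\cdot\mu}=\beta$, I would invoke Kirwan's Morse-theoretic analysis: the negative gradient flow of $\|m\|^2$ converges for every initial $\mu$, the limit lies on a critical orbit with moment map value conjugate to a unique $\beta\in\bca$, and a Kempf--Ness type argument shows the limit is reached along an $\Or(n)$-direction.

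The listed inequalities then follow from weight algebra. From {\rm (\ref{alfas})} and $\tr{\alpha_{ij}^k}=-1$, the operator $\pi(\beta+\|\beta\|^2 I)$ scales each $v_{ijk}$ by $\la\beta,\alpha_{ij}^k\ra-\|\beta\|^2$, and the extremality $\beta_\mu=\beta$ forces $\la\beta,\alpha_{ij}^k\ra\geq\|\beta\|^2$ whenever $\mu_{ij}^k\ne 0$, yielding (\ref{delta}) by orthogonality of the basis, with equality iff $\beta+\|\beta\|^2 I\in\Der(\mu)$. Applying the same computation to $\pi(\beta)$ gives $\la m(\mu),\beta\ra\geq\|\beta\|^2$, so Cauchy--Schwarz yields $\|m(\mu)\|\geq\|\beta\|$ with equality iff $m(\mu)=\beta$; translating by $\Or(n)$ (which preserves $\|m\|$) extends this to all $\mu\in\sca_\beta$ and proves (\ref{bmu}). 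For (\ref{adbeta}), the key input from the stratum construction is that $\Der(\mu)$ is contained in the non-negative $\ad\beta$-eigenspace of $\g$ (the parabolic Lie subalgebra associated to $\beta$); a decomposition $D=\sum_\gamma D_\gamma$ by $\ad\beta$-eigenvalues $\gamma\geq 0$ then gives $\la[\beta,D],D\ra=\sum_\gamma\gamma\|D_\gamma\|^2\geq 0$, with equality iff $[\beta,D]=0$. The orthogonality (\ref{betaort}) follows because $\la\beta,D\ra$ depends only on the $\ad\tg$-invariant part of $D$, whose coupling to $\beta$ is constrained by the minimality of $\beta$ on the support weights of $\mu$. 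Finally (\ref{betapos}) uses that nilpotency of $\mu$ imposes a filtration under $\beta$ which forces each eigenvalue of $\beta+\|\beta\|^2 I$ to be strictly positive.

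The main obstacle is the global stratification itself: the finiteness of $\bca$, the disjointness and $\G$-invariance of the strata, and the $\Or(n)$-normalization property $\beta_{h\cdot\mu}=\beta$. This is the technical core of the theorem and rests on the Morse theory of $\|m\|^2$ together with a careful analysis of how $\G$-orbits interact with the weight polytopes of $\tg$. Once the stratification is in place, the pointwise inequalities reduce to the weight calculations above, with (\ref{betaort}) requiring the most delicate argument because it involves the interplay between general derivations of $\mu$ and the $\ad\beta$-grading of $\g$.
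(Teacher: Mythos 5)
First, an important point of reference: the paper does not prove Theorem \ref{strata} at all. It is stated in the appendix as a summary of results imported verbatim from \cite{standard} and \cite{einsteinsolv}, so there is no internal proof to compare your attempt against. Your outline does correctly identify the framework (Kirwan's stratification adapted to the real representation of $\G$ on $V=\lam$), and your weight computations for the ``pointwise'' inequalities are sound: $\pi(\beta+||\beta||^2I)$ acts on $v_{ijk}$ by the scalar $\la\beta,\alpha_{ij}^k\ra-||\beta||^2$, the normalization $\beta_\mu=\beta$ makes these scalars nonnegative on the support of $\mu$, which gives (\ref{delta}) with the stated equality case, and (\ref{bmu}) then follows from $\la m(\mu),\beta\ra\geq||\beta||^2$ together with Cauchy--Schwarz and $\Or(n)$-equivariance of $m$. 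The finiteness of $\bca$ and $\tr\beta=-1$ are also handled correctly.

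There are, however, genuine gaps in exactly the parts you flag as the core. (1) Your definition $\sca_\beta:=\G\cdot\{\mu:\beta_\mu=\beta\}$ cannot yield a disjoint union: $\beta_{g.\mu}$ varies with $g$ (a generic $g$ makes all structure constants of $g.\mu$ nonzero, so $\beta_{g.\mu}=\mcc\{\text{all weights}\}$ regardless of $\mu$), hence a single orbit lies in $\G\cdot\{\beta_{(\cdot)}=\beta'\}$ for several non-conjugate $\beta'$ and the proposed strata overlap. The correct definition singles out the $\beta$ of minimal norm attainable over the orbit, and proving that this is well defined, that the resulting sets partition $V\smallsetminus\{0\}$, and that the normalization $\beta_{g.\mu}=\beta$ can be achieved with $g\in\Or(n)$, is the technical heart of \cite{standard}; deferring all of it to ``Kirwan's Morse-theoretic analysis'' also glosses over the fact that the ground field is $\RR$, which is why \cite{standard} redoes Kirwan's Section~12 combinatorially rather than quoting the complex-algebraic theory. (2) For (\ref{adbeta}) you assert that $\Der(\mu)$ is contained in the nonnegative $\ad\beta$-eigenspace of $\g$; this containment is not supplied by your construction and is essentially equivalent to what is to be proved (the equality case $\la[\beta,D],D\ra=0\Rightarrow[\beta,D]=0$ is precisely the exclusion of negative eigencomponents), so it cannot be taken as ``input from the stratum construction'' without an argument based on the minimality of $||\beta||$ over the orbit. (3) The justification offered for (\ref{betaort}) is a description rather than a proof: writing $\beta$ as a convex combination of the support weights on the minimal face and pairing with $\pi(D)\mu=0$ handles diagonal $D$, but general derivations require additional care. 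As it stands the proposal is an accurate road map of the cited proofs, not a proof.
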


This stratification is based on instability results and is strongly related to the
moment map in many ways other than (\ref{bmu})  (see \cite{cruzchica}).

\end{document}